\newtheorem{theorem}{Theorem}[section]
\newtheorem{corollary}[theorem]{Corollary}
\newtheorem{proposition}[theorem]{Proposition}
\newtheorem{lemma}[theorem]{Lemma}
\theoremstyle{definition}
\newtheorem{remark}[theorem]{Remark}
\newtheorem{definition}[theorem]{Definition}
\newtheorem{example}[theorem]{Example}
\def\ie{{\em i.e.,} }
\def\eg{{\em e.g.} }
\newfont\bbf{msbm10 at 12pt}
\def\phi{\varphi}
\def\N{{\mathbb N}}
\def\theta{\vartheta}
\def\Int{\mbox{\rm Int}}
\begin{document}

\title{Strongly commuting interval maps}

\author{Ana\ Anu\v si\' c and Christopher Mouron}
\address[A.\ Anu\v{s}i\'c]{Departamento de Matem\'atica Aplicada, IME-USP, Rua de Mat\~ao 1010, Cidade Universit\'aria, 05508-090 S\~ao Paulo SP, Brazil}
\email{anaanusic@ime.usp.br}
\address[C.\ Mouron]{Rhodes College, 2000 North Parkway, Memphis, TN 38112, USA}
\email{mouronc@rhodes.edu}
\thanks{AA was supported by grant 2018/17585-5, S\~ao Paulo Research Foundation (FAPESP)}
\date{\today}

\subjclass[2010]{26A21, 
 37E05, 
 54H25, 
 54C10, 
 54C60 
}
\keywords{commuting functions, strongly commuting functions, common fixed point,  set-valued maps, topological entropy, inverse limit space}

\begin{abstract}
	Maps $f,g\colon I\to I$ are called strongly commuting if $f\circ g^{-1}=g^{-1}\circ f$. We show that strongly commuting, piecewise monotone maps $f,g$ can be decomposed into a finite number of invariant intervals (or period 2 intervals) on which $f,g$ are either both open maps, or at least one of them is monotone. As a consequence, we show that strongly commuting piecewise monotone interval maps have a common fixed point. Results of the paper also have implications in understanding dynamical properties of certain maps on inverse limit spaces.
\end{abstract}

\maketitle

\section{Introduction}\label{sec:intro}

We assume that $X$ is a topological space, often a {\em continuum}, i.e. a compact, connected, metric space. A continuous function $f\colon X\to X$ will be called a {\em map}. Two maps $f,g\colon X\to X$ are said to {\em commute} if $f(g(x))=g(f(x))$ for all $x\in X$. The concept of commutativity is very basic, yet there still seems to be a surprising lack of tools for understanding it, even when the space $X$ is very simple, \eg the unit interval $I=[0,1]$. 

The research of commuting interval maps dates back to 1923, when Ritt gave a characterization of commuting polynomials \cite{Ritt}. He showed that commuting polynomials are $\pm x^n, \pm x^n$, for natural numbers $n,m$, Tchebychev polynomials, or iterates of the same polynomial. 
The great advancement in the field happened in the 50s and 60s, related to the question on the existence of a common fixed point of commutating interval maps. 
According to Baxter \cite{Baxter}, the question was raised independently by Dyer in 1954, Shields in 1955, and Dubins in 1956, and in higher generality by Isbell \cite{Isbell}. Precisely, the question was whether for any two commuting maps $f,g\colon I\to I$ there exists $x\in I$ such that $f(x)=g(x)=x$. For example, Ritt's polynomials all have such property.  
The question was settled in the negative independently by Boyce \cite{Boyce} and Huneke \cite{Huneke} in 1967. Detailed overview of the common fixed point problem was given by McDowell in \cite{McDowell}. Modern-day research asks about the existence of other common periodic points of commuting interval maps (see \eg \cite{AliKoo}, \cite{CanovasLinero}), or assumes $X$ is a more complicated space, \eg a square $I^2$ (see \cite{GrincSnoha},\cite{Linero}), or  an arc-like continuum (see \cite{Boronski}, \cite{Mou}). Overview of other contemporary research in the area can be found in \cite{RFBrown}.

In this paper we give a partial answer to the following question (see also Question~2 in \cite{McDowell}):

{\bf Question.}
	What conditions on commuting maps $f,g\colon I\to I$ guarantee the existence of a common fixed point?

For example, Cohen shows in \cite{Cohen} that if $f,g$ are open commuting maps, then they have a common fixed point. More generally, Joichi \cite{Joichi} and Folkman \cite{Folkman} show that it is enough for only one of $f,g$ to be open. 

We introduce a notion of {\em strongly commuting} maps. We say that $f,g\colon X\to X$ are strongly commuting if $f(g^{-1}(x))=g^{-1}(f(x))$ for every $x\in X$. Note that $f$ and $g$ commute if and only if $f(g^{-1}(x))\subseteq g^{-1}(f(x))$ for every $x\in X$, see Lemma~\ref{lem:comm}. So strongly commuting maps are indeed commuting. Our motivation comes from the study of particular maps on the inverse limit spaces, where the strongly commuting maps turned out to have a very important role.

 Let $X$ be a  continuum, and let $f\colon X\to X$ be a map. Then we define the {\em inverse limit space} of $(X,f)$ as:
$$\underleftarrow{\lim}(X,f)=\{(x_0,x_1,x_2,\ldots): f(x_{i+1})=x_i, i\geq 0\}\subseteq X^{\infty}.$$
The space $\underleftarrow{\lim}(X,f)$, when equipped with the product topology, becomes a continuum itself. We are interested in studying dynamical properties of maps on $\underleftarrow{\lim}(X,f)$. For example, there is a natural homeomorphism $\hat f$ on $\underleftarrow{\lim}(X,f)$ given by 
$$\hat f((x_0,x_1,x_2,\ldots))=(f(x_0),f(x_1),f(x_2),\ldots)=(f(x_0),x_0,x_1,\ldots),$$
called {\em shift} or simply {\em natural extension} of $f$. As the name suggests, dynamical properties of the natural extension $\hat f$ resemble those of $f$, \eg the {\em topological entropy} (\ie the exponential growth rate of distinguishable orbits) of $\hat f$ equals the topological entropy of $f$. However, there are other maps (see \eg \cite{Miodus}) on $\underleftarrow{\lim}(X,f)$, beyond the natural extension and its iterates, and there are no available tools for understanding their dynamics.

In \cite{AM-entropy} we study the entropy of the class of maps on $\underleftarrow{\lim}(X,g)$, which we call {\em diagonal maps}. Let $g\colon X\to X$ be a map which commutes with $f\colon X\to X$. Then we define $G\colon\underleftarrow{\lim}(X,f)\to\underleftarrow{\lim}(X,f)$, as:
$$G((x_0,x_1,x_2,\ldots))=(g(x_1),g(x_2),g(x_3),\ldots),$$
see also the commutative diagram in Figure~\ref{fig:diagonal}.

\begin{figure}[!ht]
	\begin{tikzpicture}[->,>=stealth',auto, scale=2]
	\node (1) at (0,1) {$X$};
	\node (2) at (0,0) {$X$};
	\node (3) at (1,1) {$X$};
	\node (4) at (1,0) {$X$};
	\node (5) at (2,1) {$X$};
	\node (6) at (2,0) {$X$};
	\node (7) at (3,1) {$X$};
	\node (8) at (3,0) {$X$};
	\node (9) at (4,1) {$\ldots$};
	\node (10) at (4,0) {$\ldots$};
	\draw [->] (3) to (1);
	\draw [->] (5) to (3);
	\draw [->] (7) to (5);
	\draw [->] (9) to (7);
	\draw [->] (4) to (2);
	\draw [->] (6) to (4);
	\draw [->] (8) to (6);
	\draw [->] (10) to (8);
	\draw [->] (3) to (2);
	\draw [->] (5) to (4);
	\draw [->] (7) to (6);
	\draw [->] (9) to (8);
	
	\node at (0.55,1.15) {\tiny $f$};
	\node at (1.55,1.15) {\tiny $f$};
	\node at (2.55,1.15) {\tiny $f$};
	\node at (3.55,1.15) {\tiny $f$};
	\node at (0.55,-0.2) {\tiny $f$};
	\node at (1.55,-0.2) {\tiny $f$};
	\node at (2.55,-0.2) {\tiny $f$};
	\node at (3.55,-0.2) {\tiny $f$};
	
	\node at (0.65,0.5) {\tiny $g$};
	\node at (1.65,0.5) {\tiny $g$};
	\node at (2.65,0.5) {\tiny $g$};
	\node at (3.65,0.5) {\tiny $g$};
	\end{tikzpicture}
	\caption{Commutative diagram in the construction of a diagonal map.}
	\label{fig:diagonal}
\end{figure}
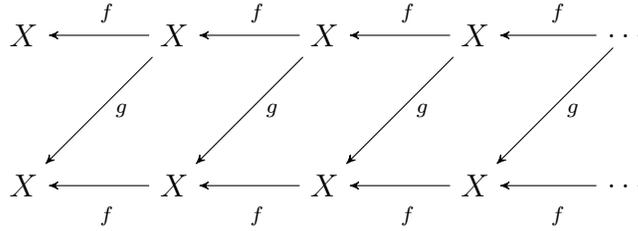 

Since $f$ and $g$ commute, $G$ is well defined and continuous. Moreover, if we assume in addition that $f$ and $g$ strongly commute, then the entropy of $G$ equals the entropy of the set-valued map $g\circ f^{-1}\colon X\to X$, see \cite[Proposition 3.5]{AM-entropy}. This is where it becomes important to understand strongly commuting maps in more depth, preferably well enough to compute the entropy of $g\circ f^{-1}=f^{-1}\circ g$. The results of this paper imply that, when $f$ and $g$ are piecewise monotone and strongly commuting, then the entropy of $g\circ f^{-1}$ exactly equals the maximum of entropies of $f$ and $g$, which is then easy to compute, see \eg \cite{MisSl}. Thus strong commutativity gives tools for understanding a new important class of maps in the inverse limits. For example, diagonal maps were used by Mouron to construct an exact map of the pseudo-arc, \cite{Mou2}. 
 
Section~\ref{sec:tents} gives many examples of strongly commuting maps on the interval. For example, symmetric tent maps $T_3$ and $T_4$ are strongly commuting, but $T_4$ and $T_6$ are not, see Figure~\ref{fig:examples1}. 

\begin{figure}[!ht]
	\centering
	\begin{tikzpicture}[scale=4]
	\draw (0,0)--(0,1)--(1,1)--(1,0)--(0,0);
	\draw (0,0)--(1/3,1)--(2/3,0)--(1,1);
	\node[above] at (1/2,1) {\small $T_3$};
	\end{tikzpicture}
	\begin{tikzpicture}[scale=4]
	\draw (0,0)--(0,1)--(1,1)--(1,0)--(0,0);
	\draw (0,0)--(1/4,1)--(1/2,0)--(3/4,1)--(1,0);
	\node[above] at (1/2,1) {\small $T_4$};
	\end{tikzpicture}
	\begin{tikzpicture}[scale=4]
	\draw (0,0)--(0,1)--(1,1)--(1,0)--(0,0);
	\draw (0,0)--(1/6,1)--(2/6,0)--(3/6,1)--(4/6,0)--(5/6,1)--(1,0);
	\node[above] at (1/2,1) {\small $T_6$};
	\end{tikzpicture}
	
	\begin{tikzpicture}[scale=4]
	\draw (0,0)--(0,1)--(1,1)--(1,0)--(0,0);
	\draw (0,0)--(1,3/4)--(2/3,1)--(0,1/2)--(2/3,0)--(1,1/4)--(0,1);
	\node[above] at (1/2,1) {\small $T_3\circ T_4^{-1}=T_4^{-1}\circ T_3$};
	\end{tikzpicture}
	\begin{tikzpicture}[scale=4]
	\draw (0,0)--(0,1)--(1,1)--(1,0)--(0,0);
	\draw[very thick] (0,0)--(1,2/3)--(1/2,1)--(0,2/3)--(1,0);
	\draw (0,1)--(1,1/3)--(1/2,0)--(0,1/3)--(1,1);
	\node[above] at (1/2,1) {\small $T_6^{-1}\circ T_4\neq T_4\circ T_6^{-1}$};
	\end{tikzpicture}
	\caption{Symmetric tent maps $T_3,T_4,T_6$, and graphs of set-valued functions $T_3\circ T_4^{-1}=T_4^{-1}\circ T_3$, and $T_6^{-1}\circ T_4$. Note that (since $T_4$ and $T_6$ commute) $T_4\circ T_6^{-1}\subseteq T_6^{-1}\circ T_4$. The graph of $T_4\circ T_6^{-1}$ is given in boldface.}
	\label{fig:examples1}
\end{figure}
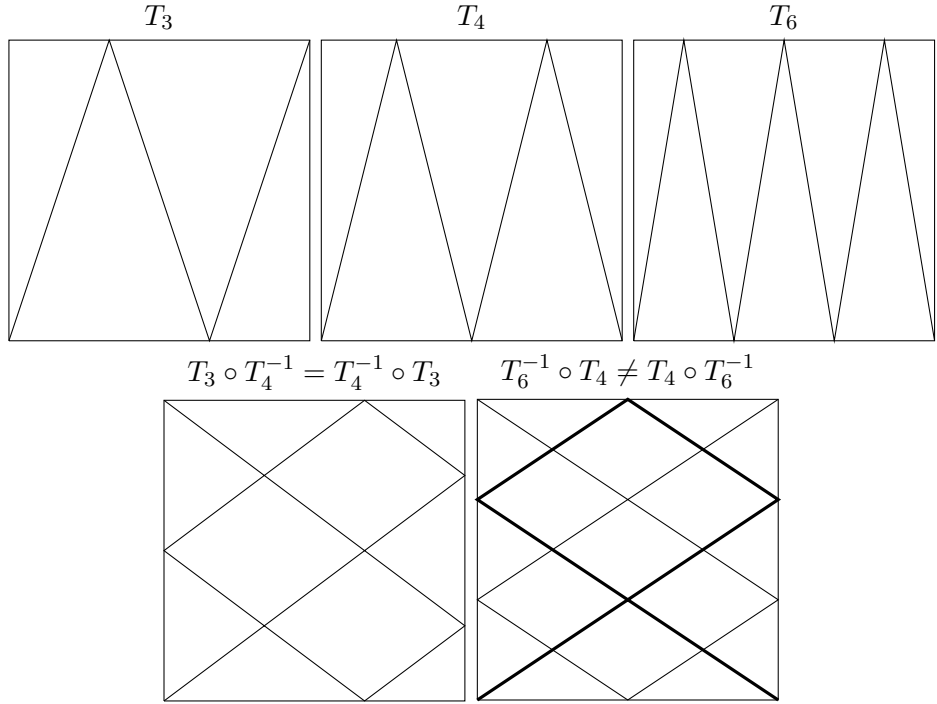

Actually, we show in Corollary~\ref{cor:commoncritpt} that if $f,g$ are piecewise monotone, and strongly commuting, then they don't have a critical point in common. In Figure~\ref{fig:examples2} we show another example of strongly commutating maps - there are invariant intervals for both $f$ and $g$ such that the restrictions are in pairs either both open, or at least one is monotone. The main result of the paper shows that, essentially, this is all that can happen, see also Figures~\ref{fig:typea}, \ref{fig:typeb}, \ref{fig:typec}.

\begin{figure}[!ht]
	\centering
	\begin{tikzpicture}[scale=4]
	\draw (0,0)--(0,1)--(1,1)--(1,0)--(0,0);
	\draw (0,0)--(0,1/3)--(1/3,1/3)--(1/3,0)--(0,0);
	\draw (1/3,1/3)--(1/3,2/3)--(2/3,2/3)--(2/3,1/3)--(1/3,1/3);
	\draw (2/3,2/3)--(2/3,1)--(1,1)--(1,2/3)--(2/3,2/3);
	\draw (0,1/3)--(1/6,0)--(1/3,1/3)--(4/9,5/9)--(5/9,4/9)--(2/3,2/3)--(1,1);
	\node[above] at (1/2,1) {\small $f$};
	\end{tikzpicture}
	\begin{tikzpicture}[scale=4]
	\draw (0,0)--(0,1)--(1,1)--(1,0)--(0,0);
	\draw (0,0)--(0,1/3)--(1/3,1/3)--(1/3,0)--(0,0);
	\draw (1/3,1/3)--(1/3,2/3)--(2/3,2/3)--(2/3,1/3)--(1/3,1/3);
	\draw (2/3,2/3)--(2/3,1)--(1,1)--(1,2/3)--(2/3,2/3);
	\draw (0,0)--(1/9,1/3)--(2/9,0)--(1/3,1/3)--(2/3,2/3)--(5/6,1)--(11/12,5/6)--(1,1);
	\node[above] at (1/2,1) {\small $g$};
	\end{tikzpicture}
	\begin{tikzpicture}[scale=4]
	\draw (0,0)--(0,1)--(1,1)--(1,0)--(0,0);
	\draw (0,0)--(0,1/3)--(1/3,1/3)--(1/3,0)--(0,0);
	\draw (1/3,1/3)--(1/3,2/3)--(2/3,2/3)--(2/3,1/3)--(1/3,1/3);
	\draw (2/3,2/3)--(2/3,1)--(1,1)--(1,2/3)--(2/3,2/3);
	\draw (1/3,0)--(1/9,1/3)--(0,1/6)--(1/9,0)--(1/3,1/3)--(5/9,4/9)--(4/9,5/9)--(2/3,2/3)--(5/6,1)--(11/12,5/6)--(1,1);
	\node[above] at (1/2,1) {\small $f^{-1}\circ g=g\circ f^{-1}$};
	\end{tikzpicture}
	\caption{Maps $f$ and $g$ have invariant intervals $I_1,I_2,I_3$ in common. Note that at least one map in pairs $f|_{I_1}, g|_{I_1}$, $f|_{I_2},g|_{I_2}$ and $f|_{I_3}, g|_{I_3}$ is open - either both maps in a pair are open and non-monotone, or at least one is monotone.}
	\label{fig:examples2}
\end{figure}
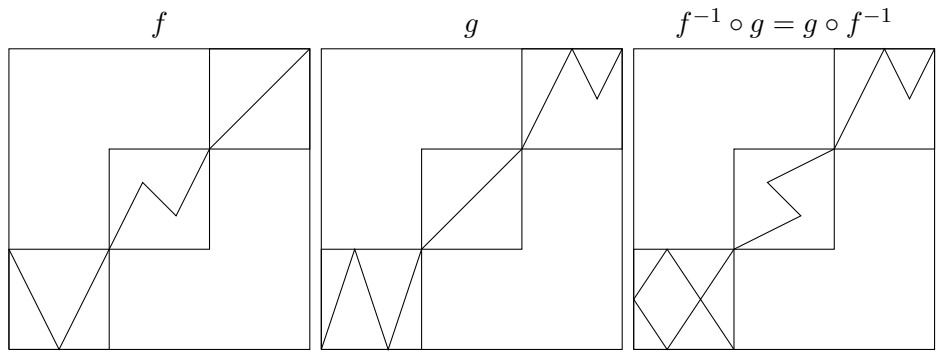

{\bf Theorem~5.21.} {\em Let $f,g\colon I\to I$ be piecewise monotone maps such that $f^{-1}\circ g=g\circ f^{-1}$. Then there are $0=p_0<p_1<\ldots<p_l=1$ such that  one of the following three occurs:
	\begin{enumerate}
		\item[(a)] $[p_i,p_{i+1}]$ is invariant under $f$ and $g$ for every $i$, and
		\begin{itemize}
			\item[(i)] if $g|_{[p_i,p_{i+1}]}$ is open and non-monotone, then $f|_{[p_i,p_{i+1}]}$ is open.
			\item[(ii)] if $g|_{[p_i,p_{i+1}]}$ is non-monotone and not open, then $f|_{[p_i,p_{i+1}]}$ is monotone.
		\end{itemize}
		\item[(b)] $[p_i,p_{i+1}]$ is invariant under  $g$ and $f([p_i,p_{i+1}])=[p_{l-i-1},p_{l-i}]$ for every $i\in\{0,\ldots,l-1\}$, and
		\begin{itemize}
			\item[(i)] if $g|_{[p_i,p_{i+1}]}$ is open and non-monotone, then $f^2|_{[p_i,p_{i+1}]}$ and $f|_{[p_i,p_{i+1}]}$ are open  
			\item[(ii)] if $g|_{[p_i,p_{i+1}]}$ is non-monotone and not open, then $f|_{[p_i,p_{i+1}]}$ and $f|_{[p_{l-i-1},p_{l-i}]}$ are both monotone.
		\end{itemize}
		\item[(c)]  $f([p_i,p_{i+1}])=[p_{l-i-1},p_{l-i}]=g([p_i,p_{i+1}])$ for every $i\in\{0,\ldots,l-1\}$, and
		\begin{itemize}
			\item[(i)] if $g^2|_{[p_i,p_{i+1}]}$ is open and non-monotone, then $f^2|_{[p_i,p_{i+1}]}$ and $f|_{[p_i,p_{i+1}]}$ are open  
			\item[(ii)] if $g^2|_{[p_i,p_{i+1}]}$ is non-monotone and not open, then $f|_{[p_i,p_{i+1}]}$ and $f|_{[p_{l-i-1},p_{l-i}]}$ are both monotone.
		\end{itemize}
		
	\end{enumerate}}

The result of Joichi \cite{Joichi} then easily implies that strongly commuting interval maps have a common fixed point. 

Let us briefly give an outline of the paper. After preliminaries in Section~\ref{sec:preliminaries}, we give many examples in Section~\ref{sec:tents}, and specifically show that symmetric tent maps $T_n$ and $T_m$ are strongly commutating if and only if $n$ and $m$ are relatively prime in Proposition~\ref{prop:mnrelprime} and Proposition~\ref{prop:mnnotrelprime}. In Section~\ref{sec:hats} we introduce the notion of {\em hats} and {\em endpoints} of the graph of the set-valued map $g^{-1}\circ f$ and characterize them in case when $f$ and $g$ are strongly commuting. We note that a term ``hat" seems to be used in older literature for symmetric tent maps, see \eg \cite{Folkman}. Here, a hat is a point in the graph of a set-valued map with certain properties. We believe this tool might have a possible applications in higher generality. Finally, in Section~\ref{sec:main} we prove the main result.

\section{Preliminaries}\label{sec:preliminaries}

Given two topological spaces $X, Y$, a continuous function $f\colon X\to Y$ will be referred to as a {\em map}. We will restrict paper to compact, connected, metrizable spaces (also called {\em continua}), and usually the unit interval $I=[0,1]$.

{\em Set-valued function} from $X$ to $Y$ is a function $F\colon X\to 2^Y$, where $2^Y$ denotes the set of all non-empty closed subsets of $Y$. We always assume that set-valued functions are {\em upper semi-continuous}, \ie the {\em graph} of $F$,
$$\Gamma(F)=\{(x,y): y\in F(x)\}$$
is closed in $X\times Y$. When there is no confusion, we will often abuse the notation and denote set-valued functions as $F\colon X\to Y$. By $\pi_1\colon \Gamma(F)\to X$ and $\pi_2\colon\Gamma(F)\to Y$ we denote the projections to the first and second coordinate.

We say that maps $f,g\colon X\to X$ {\em commute} if $f(g(x))=g(f(x))$ for every $x\in X$.

\begin{lemma}\label{lem:comm}
	Maps $f\colon X\to X$ and $g\colon X\to X$ commute if and only if $f\circ g^{-1}(x)\subseteq g^{-1}\circ f(x)$, for every $x\in X$.
\end{lemma}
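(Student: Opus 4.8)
The plan is to unwind the definitions of the two set-valued compositions and reduce each direction to an elementary pointwise statement about $g$. For $x\in X$ the fiber is $g^{-1}(x)=\{y\in X: g(y)=x\}$, so $f\circ g^{-1}(x)=\{f(y): g(y)=x\}$, whereas $g^{-1}\circ f(x)=\{z\in X: g(z)=f(x)\}$. Hence the inclusion $f\circ g^{-1}(x)\subseteq g^{-1}\circ f(x)$ says exactly: whenever $g(y)=x$, one has $g(f(y))=f(x)$.

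For the forward implication, assume $f$ and $g$ commute. Fix $x\in X$; if $g^{-1}(x)=\emptyset$ the inclusion holds vacuously, so take $y\in g^{-1}(x)$. Then $g(f(y))=f(g(y))=f(x)$, i.e.\ $f(y)\in g^{-1}(f(x))$, which gives $f\circ g^{-1}(x)\subseteq g^{-1}\circ f(x)$.

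For the converse, assume $f\circ g^{-1}(x)\subseteq g^{-1}\circ f(x)$ for every $x\in X$. Given an arbitrary $z\in X$, put $x:=g(z)$, so $z\in g^{-1}(x)$ and therefore $f(z)\in f\circ g^{-1}(x)\subseteq g^{-1}(f(x))=g^{-1}(f(g(z)))$. This means $g(f(z))=f(g(z))$, and since $z$ was arbitrary, $f$ and $g$ commute.

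The only mild subtlety is that $g$ need not be surjective, so $g^{-1}(x)$ can be empty for some $x$; this causes no trouble, since in the forward direction such $x$ satisfy the inclusion vacuously, and in the converse the hypothesis is only ever applied at points $x=g(z)$ in the image of $g$. There is no real obstacle here — the argument is purely formal — but it is worth recording since this characterization (together with the definition of strong commutativity as equality rather than containment) underlies the rest of the paper.
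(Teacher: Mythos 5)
Your proof is correct and follows essentially the same route as the paper's: unwind both set-valued compositions pointwise, use $g(f(y))=f(g(y))$ for the forward inclusion, and specialize the hypothesis at $x=g(z)$ for the converse. The remark about possibly empty fibers $g^{-1}(x)$ is a harmless refinement the paper leaves implicit.
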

\begin{proof}
	Note that $f\circ g^{-1}(x)=\{f(y): g(y)=x\}$ and $g^{-1}\circ f(x)=\{z: g(z)=f(x)\}$. So, given $w=f(y)\in f\circ g^{-1}(x)$, we have
	$g(f(y))=f(g(y))=f(x)$. It follows that $w\in g^{-1}\circ f(x)$.
	
	For the other direction, assume that $f\circ g^{-1}(x)\subseteq g^{-1}\circ f(x)$, for every $x\in X$. Take any $y\in I$ and let $g(y)=z$. Then, since $f(y)\in f\circ g^{-1}(z)\subseteq g^{-1}\circ f(z)$, we have $g\circ f(y)=f(z)=f\circ g(y)$.
\end{proof}

In particular, for commutative maps $f,g\colon X\to X$, it holds that $\Gamma(f\circ g^{-1})\subseteq\Gamma(g^{-1}\circ f)$. 

\begin{remark}\label{rem:graphconnected}
Note that 
$$\Gamma(f\circ g^{-1})=\{(g(t),f(t)): t\in X\},$$
so the graph of $f\circ g^{-1}$ can be parametrized by $X$. In particular, the graph of $f\circ g^{-1}$ is connected. Furthermore, $\Gamma(f\circ g^{-1})$ is closed in $I^2$.
\end{remark}

We say that $f,g\colon X\to X$ are {\em strongly commuting} if $f\circ g^{-1}(x)=g^{-1}\circ f(x)$ for all $x\in X$. Thus strongly commuting maps are commuting, but not vice-versa; see the examples in the following section.

In what follows, we will restrict our functions and set-valued functions to $X=I=[0,1]$, the unit interval. Let $f\colon I\to I$ be a map. We say that $c\in (0,1)$ is a {\em critical point of $f$} if $f|_U$ is not one-to-one for every open interval $U\ni c$. The set of critical points of $f$ will be denoted by $C_f$. We say that $f$ is {\em piecewise monotone} if $C_f$ is finite. We say that $f$ is {\em nowhere constant} if there is no open interval $U\subset I$ for which $f(U)$ is a singleton. Note that every piecewise monotone map is nowhere constant.

In the rest of the paper, we assume that $f,g\colon I\to I$ are piecewise monotone onto maps which strongly commute. Since $f$ and $g$ are nowhere constant, the set $\Gamma(f\circ g^{-1})=\Gamma(g^{-1}\circ f)$ is one-dimensional, and since $f$ and $g$ are continuous, it is closed in $I^2$.
Moreover, we let $n=|C_f|, m=|C_g|$, for $m,n\in\N$, and denote $C_f=\{0<c_1<c_2<\ldots<c_n<1\}, C_g=\{0<d_1<d_2<\ldots<d_m<1\}$. For the sake of notation, we will often denote $c_0=d_0=0, c_{n+1}=d_{m+1}=1$.

\section{Symmetric tent maps}\label{sec:tents}

For $n\geq 2$ we define the {\em symmetric $n$-tent map} $T_n\colon I\to I$ as a map with critical points $\{i/n; 0\leq i\leq n\}$ such that $T_n(i/n)=0$ for even $i$, and $T_n(i/n)=1$ for odd $i$, and such that $T_n|_{[\frac in,\frac{i+1}{n}]}$ is linear for every $i\in\{0,\ldots,n-1\}$. See Figure~\ref{fig:folds}.

\begin{figure}[!ht]
	\centering
	\begin{tikzpicture}[scale=0.4]
	\draw[thick] (0,0)--(0,8)--(8,8)--(8,0)--(0,0);
	\draw[thick] (0,0)--(4,8)--(8,0);
	\node at (4,9) {\small $T_2$};
	\end{tikzpicture}
	\begin{tikzpicture}[scale=0.4]
	\draw[thick] (0,0)--(0,8)--(8,8)--(8,0)--(0,0);
	\draw[thick] (0,0)--(8/3,8)--(16/3,0)--(8,8);
	\node at (4,9) {\small $T_3$};
	\end{tikzpicture}
	\begin{tikzpicture}[scale=0.4]
	\draw[thick] (0,0)--(0,8)--(8,8)--(8,0)--(0,0);
	\draw[thick] (0,0)--(2,8)--(4,0)--(6,8)--(8,0);
	\node at (4,9) {\small $T_4$};
	\end{tikzpicture}
	\begin{tikzpicture}[scale=0.4]
	\draw[thick] (0,0)--(0,8)--(8,8)--(8,0)--(0,0);
	\draw[thick] (0,0)--(8/5,8)--(16/5,0)--(24/5,8)--(32/5,0)--(8,8);
	\node at (4,9) {\small $T_5$};
	\end{tikzpicture}
	\caption{Graphs of symmetric $n$-tent maps $T_n\colon I\to I$.}
	\label{fig:folds}
\end{figure}

It is not difficult to check that $T_n\circ T_m=T_{nm}$ for every $n,m\geq 2$. It follows that symmetric tent maps commute, \ie $T_n\circ T_m=T_m\circ T_n$, for every $n, m\in\N$. We will show that $T_n$ and $T_m$ strongly commute if and only if $n$ and $m$ are relatively prime. 

Since $T_n$ and $T_m$ commute, Lemma~\ref{lem:comm} implies that $T_n\circ T_m^{-1}(x)\subseteq T_m^{-1}\circ T_n(x)$, for every $x\in I$ and every $n, m\geq 2$. It is, however, in general not true that $T_m^{-1}\circ T_n(x)\subseteq T_n\circ T_m^{-1}(x)$, for every $x\in I$ and $n, m\geq 2$. For example, one easily checks that $T_2\circ T_2^{-1}(x)=x$ and $T_2^{-1}\circ T_2(x)=\{x, 1-x\}$, so $T_2\circ T_2^{-1}(x)\subsetneq T_2^{-1}\circ T_2(x)$ for all $x\in I\setminus\{1/2\}$. See the graphs of some $T_m^{-1}\circ T_n$ in Figure~\ref{fig:examples1}. 

\begin{remark}\label{rem:tent_preimages}
	Note that for $n\geq 2$ and $x,y\in I$ it holds that $T_n(x)=T_n(y)$ if and only if there is $i\geq 0$ such that $x+y=\frac{2i}{n}$, or $|x-y|=\frac{2i}{n}$.
\end{remark}

\begin{remark}\label{rem:more_tent_stuff}
	Let $n,m\in\N$. Since $T_m$ and $T_n$ commute, then $\{0,1\}\supseteq T_m(\{0,1\})\ni T_m(T_n(i/n))=T_n(T_m(i/n))$, for every $i\in\{0,1,\ldots,n\}$. It follows that $T_m(\{i/n: i\in\{0,1,\ldots,n\}\})\subseteq \{i/n: i\in\{0,1,\ldots,n\}\}$.
\end{remark}

\begin{proposition}\label{prop:mnrelprime}
	If $n, m\geq 2$ are relatively prime, then $T_n\circ T_m^{-1}(x)= T_m^{-1}\circ T_n(x)$, for every $x\in I$.
\end{proposition}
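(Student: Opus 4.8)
The inclusion $T_n\circ T_m^{-1}(x)\subseteq T_m^{-1}\circ T_n(x)$ is already known from Lemma~\ref{lem:comm}, since $T_n$ and $T_m$ commute, so the whole content is the reverse inclusion. The plan is to lift both composites to the circle $\R/2\Z$, on which $T_k$ becomes multiplication by $k$. Concretely, let $q\colon\R/2\Z\to[0,1]$ be the \emph{folding map}, $q(t)=t$ for $t\in[0,1]$ and $q(t)=2-t$ for $t\in[1,2]$; it is continuous and onto, satisfies $q(-t)=q(t)$, and $q(s)=q(t)$ if and only if $s\equiv\pm t\pmod 2$. The first step is to record the semiconjugacy
$$T_k\circ q=q\circ\mu_k,\qquad\text{where }\mu_k\colon\R/2\Z\to\R/2\Z,\ \mu_k(t)=kt;$$
this is a routine check: verify it on $[0,1/k]$, where $q=\mathrm{id}$ and $T_k$ is linear with slope $k$, then propagate using $2$-periodicity and $q(-t)=q(t)$ (alternatively, reduce to $k$ prime via $T_{kj}=T_k\circ T_j$).

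The second step is bookkeeping with $q$. Since $q|_{[0,1]}=\mathrm{id}$ and $q^{-1}(x)=\{x,-x\}$, the semiconjugacy gives $T_m^{-1}(x)=q\big(\mu_m^{-1}(q^{-1}(x))\big)$ as subsets of $[0,1]$, and applying $T_n$ pointwise,
$$T_n\circ T_m^{-1}(x)=q\big(\mu_n\mu_m^{-1}(q^{-1}(x))\big).$$
On the other side, $\mu_n(q^{-1}(x))=\{nx,-nx\}$ is invariant under $t\mapsto-t$, so $q^{-1}(T_n(x))=q^{-1}(q(\mu_n(q^{-1}(x))))=\mu_n(q^{-1}(x))$ (no spurious preimages appear), whence
$$T_m^{-1}\circ T_n(x)=q\big(\mu_m^{-1}(q^{-1}(T_n(x)))\big)=q\big(\mu_m^{-1}\mu_n(q^{-1}(x))\big).$$
Thus it suffices to prove that $\mu_n\circ\mu_m^{-1}$ and $\mu_m^{-1}\circ\mu_n$ agree as set-valued maps on $\R/2\Z$, i.e.\ that multiplication by $n$ and by $m$ strongly commute on the circle.

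For the last step, fix $t\in\R/2\Z$ and let $H=\ker\mu_m=\tfrac2m\Z/2\Z$, a cyclic subgroup of order $m$. Then $\mu_m^{-1}(t)$ is the coset $\tfrac1m t+H$, so $\mu_n(\mu_m^{-1}(t))=\tfrac nm t+\mu_n(H)$ while $\mu_m^{-1}(\mu_n(t))=\tfrac nm t+H$; these coincide precisely when $\mu_n(H)=H$. Since $\gcd(n,m)=1$, multiplication by $n$ is an automorphism of $H\cong\Z/m\Z$, so indeed $\mu_n(H)=H$ and the two cosets are equal. Feeding this back through the identities of the previous paragraph (apply the equality to each of the two points of $q^{-1}(x)$, then take $q$) yields $T_n\circ T_m^{-1}(x)=T_m^{-1}\circ T_n(x)$ for every $x\in I$.

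The only subtle points are the verification of the folding semiconjugacy $T_k\circ q=q\circ\mu_k$ — elementary, but it must be done with care over the piecewise-linear pieces — and tracking the $\pm$-symmetry so that the $q^{-1}\circ q$ occurring in $T_m^{-1}\circ T_n$ does not create extra preimages; after that, the coprimality hypothesis enters only through the fact that $n$ is invertible modulo $m$. One can also argue entirely within $[0,1]$, using Remark~\ref{rem:tent_preimages} together with the explicit affine branches $y=(x+i)/m$ and $y=(i+1-x)/m$ of $T_m^{-1}$, but then the cases $x\in\{0,1\}$ and $T_n(x)\in\{0,1\}$, where the relevant preimage sets shrink, have to be handled separately.
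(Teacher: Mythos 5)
Your proof is correct, and it takes a genuinely different route from the paper's. The paper stays inside $[0,1]$ and argues by counting: for $x$ outside the finite exceptional set $\{i/n\}$ it shows $T_n$ is injective on the $m$-point fiber $T_m^{-1}(x)$, using the arithmetic description of tent-map fibers (Remark~\ref{rem:tent_preimages}) together with coprimality to rule out a coincidence $T_n(a)=T_n(b)$; since the forward inclusion is free from commutativity, equal cardinalities force equality, and the exceptional points are then absorbed by a closure argument comparing $\Gamma(T_m^{-1}\circ T_n)$ with the closed set $\Gamma(T_n\circ T_m^{-1})$. You instead pass to the double cover $q\colon\R/2\Z\to[0,1]$, where $T_k$ lifts to $\mu_k(t)=kt$, and reduce the whole statement to the fact that multiplication by $n$ permutes the $m$-torsion subgroup $H=\ker\mu_m\cong\Z/m\Z$ when $\gcd(n,m)=1$; the only delicate point, that $q^{-1}\circ q$ applied to $\mu_n(q^{-1}(x))$ creates no extra points because that set is already symmetric under $t\mapsto -t$, you handle explicitly. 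Your argument is uniform in $x$ (no exceptional set, no closure step) and isolates exactly where coprimality enters (invertibility of $n$ modulo $m$), at the cost of introducing and verifying the semiconjugacy $T_k\circ q=q\circ\mu_k$; the paper's argument is more elementary and self-contained within its interval setup but needs the two-stage structure. Both are valid; yours would also adapt more readily to other quotients of expanding circle endomorphisms.
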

\begin{proof}
	Lemma~\ref{lem:comm} implies that $T_n\circ T_m^{-1}(x)\subseteq T_m^{-1}\circ T_n(x)$, for every $x\in I$. Thus we only need to show $T_m^{-1}\circ T_n(x)\subseteq T_n\circ T_m^{-1}(x)$ for every $x\in I$.
	
	Assume first that $x\not\in\{i/n: i\in\{0,1,\ldots,n\}\}$. Then $T_n(x)\in (0,1)$, so $|T_m^{-1}(T_n(x))|=m$. We will show that the cardinality of $T_n\circ T_m^{-1}(x)$ is at least $m$. Since $T_n\circ T_m^{-1}(x)\subseteq T_m^{-1}\circ T_n(x)$, that implies that $T_n\circ T_m^{-1}(x)=T_m^{-1}\circ T_n(x)$.
	Let $a>b\in T_m^{-1}(x)$. We claim that $T_n(a)\neq T_n(b)$. Assume the contrary.
	
	Since $T_m(a)=T_m(b)$, Remark~\ref{rem:tent_preimages} implies that there is $K\geq 0$ such that $a+b=\frac{2K}{m}$, or $a-b=\frac{2K}{m}$. 
	
	{\bf (a)} Assume that $a+b=\frac{2K}{m}$. Since $T_n(a)=T_n(b)$, there is $L\geq 0$ such that $a+b=\frac{2L}{n}$, or $a-b=\frac{2L}{n}$. In the first case we get $\frac{2K}{m}=\frac{2L}{n}$, which is a contradiction with $n,m$ being relatively prime. In the second case we get $a=\frac{K}{m}+\frac{L}{n}$, so by Remark~\ref{rem:more_tent_stuff}, it follows that $x=T_m(a)\in\{T_m(L/n), 1-T_m(L/n)\}\in\{i/n: i\in\{0,1,\ldots,n\}\}$, which is a contradiction.
	
	{\bf (b)} Assume that $a-b=\frac{2K}{m}$. Then again, since $T_n(a)=T_m(b)$, there is $L\geq 0$ such that $a+b=\frac{2L}{n}$, or $a-b=\frac{2L}{n}$. In the second case we get $\frac{2K}{m}=\frac{2L}{n}$, which is a contradiction with $n,m$ being relatively prime. In the first we again get $a=\frac{K}{m}+\frac{L}{n}$, which leads to a contradiction as in case (a).
		
	So we showed that $T_n\circ T_m^{-1}(x)=T_{m}^{-1}\circ T_n(x)$ for every $x\in I\setminus\{i/n: i\in\{0,\ldots,n\}\}=:S$. Furthermore, it is easy to check that $\Gamma(T_m^{-1}\circ T_n)=\overline{\Gamma(T_m^{-1}\circ T_n|_S)}$. Since $\Gamma(T_n\circ T_m^{-1})$ is closed in $I$ (see Remark~\ref{rem:graphconnected}), we have $\Gamma(T_m^{-1}\circ T_n)=\overline{\Gamma(T_m^{-1}\circ T_n|_S)}\subseteq\overline{\Gamma(T_n\circ T_m^{-1}|_S)}\subseteq\overline{\Gamma(T_n\circ T_m^{-1})}=\Gamma(T_n\circ T_m^{-1})$. It follows that $\Gamma(T_m^{-1}\circ T_n)=\Gamma(T_n\circ T_m^{-1})$, and that finishes the proof.
\end{proof}

\begin{proposition}\label{prop:mnnotrelprime}
	If $n,m\geq 2$ are not relatively prime, then there is $x\in I$ such that $T_n\circ T_m^{-1}(x)\subsetneq T_m^{-1}\circ T_n(x)$.
\end{proposition}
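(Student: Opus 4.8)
The plan is to produce a single point $x$ at which the inclusion $T_n\circ T_m^{-1}(x)\subseteq T_m^{-1}\circ T_n(x)$ from Lemma~\ref{lem:comm} is proper, and to detect properness by a cardinality count: for a suitably generic $x$ both $T_m^{-1}(x)$ and $T_m^{-1}(T_n(x))$ will have exactly $m$ points, whereas I will arrange that two of the $m$ points of $T_m^{-1}(x)$ get identified by $T_n$, so that $T_n\circ T_m^{-1}(x)$ has at most $m-1$ points.

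Concretely, I would let $d=\gcd(n,m)\ge 2$ and write $n=dn'$, $m=dm'$. The key observation is that $\tfrac1d=\tfrac{m'}{m}=\tfrac{n'}{n}$ is a breakpoint (critical point) of both $T_m$ and $T_n$, with $T_m(\tfrac1d)\in\{0,1\}$. I would then fix a small $\epsilon\in(0,\tfrac1m)$, set $y_1=\tfrac1d+\epsilon$ and $y_2=\tfrac1d-\epsilon$ (distinct points of $(0,1)$, since $d\ge2$), and note that $y_1+y_2=\tfrac2d=\tfrac{2m'}{m}=\tfrac{2n'}{n}$, so Remark~\ref{rem:tent_preimages} gives at once $T_m(y_1)=T_m(y_2)=:x$ and $T_n(y_1)=T_n(y_2)$.

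Next I would pin down $x$. On the interval $(\tfrac1d,\tfrac1d+\tfrac1m)$, which lies between two consecutive breakpoints of $T_m$, the map $\epsilon\mapsto T_m(\tfrac1d+\epsilon)$ is linear with nonzero slope, hence injective with image $(0,1)$; so $x\in(0,1)$ for every such $\epsilon$, and $x$ lands in the finite set $\{i/n:0\le i\le n\}$ for at most finitely many $\epsilon$. Choosing $\epsilon$ outside that finite set, I get $x\in(0,1)\setminus\{i/n:0\le i\le n\}$. Then $T_m^{-1}(x)$ consists of exactly $m$ distinct points (one in the interior of each linear piece of $T_m$), among them $y_1\ne y_2$; and since $x$ is not a critical point of $T_n$ we have $T_n(x)\in(0,1)$, whence $T_m^{-1}(T_n(x))$ also has exactly $m$ points, i.e. $|T_m^{-1}\circ T_n(x)|=m$. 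On the other hand $T_n$ collapses $y_1$ and $y_2$, so $|T_n\circ T_m^{-1}(x)|\le m-1$. Together with the inclusion from Lemma~\ref{lem:comm}, this yields $T_n\circ T_m^{-1}(x)\subsetneq T_m^{-1}\circ T_n(x)$, as desired.

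The only delicate point is the genericity of $x$ — making sure it avoids $0$, $1$, and the breakpoints $\{i/n\}$ of $T_n$ — which is handled by confining $\epsilon$ to a single linear branch of $T_m$ and discarding finitely many exceptional values. Beyond this bookkeeping, everything is an immediate consequence of Remark~\ref{rem:tent_preimages} and the piecewise-linear structure of tent maps, so I do not expect any real obstacle.
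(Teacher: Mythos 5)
Your proof is correct, and while it shares the paper's overall strategy -- comparing cardinalities, using that $|T_m^{-1}(T_n(x))|=m$ for generic $x$ and showing $|T_n(T_m^{-1}(x))|<m$ -- the mechanism by which you obtain the cardinality drop is genuinely different. The paper exploits the semigroup identity $T_a\circ T_b=T_{ab}$: writing $n=Nk$, $m=Mk$ with $k=\gcd(n,m)>1$, it factors $T_n\circ T_m^{-1}=T_N\circ T_k\circ T_k^{-1}\circ T_M^{-1}=T_N\circ T_M^{-1}$ (using surjectivity of $T_k$), which immediately bounds the cardinality by $M=m/k<m$ for \emph{every} $x\notin\{i/n\}$. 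You instead exhibit an explicit collapsing pair: two preimages $y_{1,2}=\tfrac1d\pm\epsilon$ of a single point $x$ under $T_m$ that are identified by $T_n$ because $y_1+y_2=\tfrac{2m'}{m}=\tfrac{2n'}{n}$ satisfies both criteria of Remark~\ref{rem:tent_preimages}; this gives the weaker bound $m-1$ but suffices. Your argument is the natural converse of the paper's proof of Proposition~\ref{prop:mnrelprime} (where coprimality is used to rule out exactly such coincidences $\tfrac{2K}{m}=\tfrac{2L}{n}$), so the two propositions become mirror images of one another; the paper's route is shorter and quantitatively sharper, while yours is more self-contained and makes the failure of strong commutativity visible at a concrete point. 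Your genericity bookkeeping (confining $\epsilon$ to one linear branch so that $x$ ranges injectively over $(0,1)$, then discarding the finitely many $\epsilon$ with $x\in\{i/n\}$) is sound, as is the check that $y_1,y_2\in(0,1)$ are distinct preimages of the same $x$.
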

\begin{proof}
	Let $x\neq \frac in$, $i\in\{0,\ldots,n\}$. Then $T_n(x)\not\in\{0,1\}$, so $|T_m^{-1}(T_n(x))|=m$. However, $n,m$ are not relatively prime, so there is $k>1$ such that $n=Nk,m=Mk$, for $N,M\in\N$. Then we can write $T_n=T_N\circ T_k, T_m=T_M\circ T_k$, and thus $T_n\circ T_m^{-1}=T_N\circ T_M^{-1}$. However, cardinality of the set $T_N\circ T_M^{-1}(x)$ is at most $M<m$, thus $T_n\circ T_m^{-1}(x)\subsetneq T_m^{-1}\circ T_n(x)$.
\end{proof}

\section{Hats and endpoints}\label{sec:hats}

In this section we study the graphs of set-valued maps $f\circ g^{-1}$ and $g^{-1}\circ f$. We introduce the notions of {\em hats} and {\em endpoints} and then show that, when $f$ and $g$ strongly commute, they exactly correspond to critical points of $f$.

Note that $f^{-1}\circ g=g\circ f^{-1}$ if and only if  $g^{-1}\circ f=f\circ g^{-1}$, so in all of the results $f$ and $g$ can be interchanged and the results are still true. For simplicity, the notions and properties of hats and endpoints will be introduced for the set-valued map $g^{-1}\circ f$.

\begin{definition}
	A point $(x,y)\in\Gamma(g^{-1}\circ f)$ is called a {\em hat of $\Gamma(g^{-1}\circ f)$} if $x\in C_f$ and $y\not\in C_g$. See an example in Figure~\ref{fig:hat}.
\end{definition}

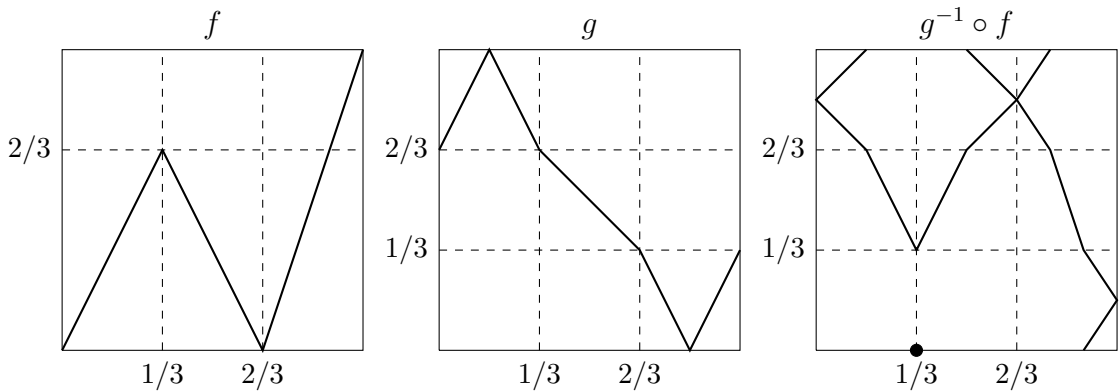
\begin{figure}
	\centering
	\begin{tikzpicture}[scale=4]
	\draw (0,0)--(1,0)--(1,1)--(0,1)--(0,0);
	\draw[thick] (0,0)--(1/3,2/3)--(2/3,0)--(1,1);
	\node[above] at (0.5,1) {$f$};
	
	\draw[dashed] (0,2/3)--(1,2/3);
	\draw[dashed] (1/3,0)--(1/3,1);
	\draw[dashed] (2/3,0)--(2/3,1);
	\node[left] at (0,2/3) {\small $2/3$};
	\node[below] at (1/3,0) {\small $1/3$};
	\node[below] at (2/3,0) {\small $2/3$};
	\end{tikzpicture}
	\begin{tikzpicture}[scale=4]
	\draw (0,0)--(1,0)--(1,1)--(0,1)--(0,0);
	\draw[thick] (0,2/3)--(1/6,1)--(1/3,2/3)--(2/3,1/3)--(5/6,0)--(1,1/3);
	\node[above] at (0.5,1) {$g$};
	
	\draw[dashed] (0,1/3)--(1,1/3);
	\draw[dashed] (0,2/3)--(1,2/3);
	\draw[dashed] (1/3,0)--(1/3,1);
	\draw[dashed] (2/3,0)--(2/3,1);
	\node[left] at (0,1/3) {\small $1/3$};
	\node[left] at (0,2/3) {\small $2/3$};
	\node[below] at (1/3,0) {\small $1/3$};
	\node[below] at (2/3,0) {\small $2/3$};
	\end{tikzpicture}
	\begin{tikzpicture}[scale=4]
	\draw (0,0)--(1,0)--(1,1)--(0,1)--(0,0);
	\draw[thick] (1/6,1)--(0,5/6)--(1/6,2/3)--(1/3,1/3)--(1/2,2/3)--(2/3,5/6)--(7/9,1);
	\draw[thick] (1/2,1)--(2/3,5/6)--(7/9,2/3)--(8/9,1/3)--(1,1/6)--(8/9,0);
	\draw[fill] (1/3,0) circle (0.02);
	\node[above] at (0.5,1) {$g^{-1}\circ f$};
	
	\draw[dashed] (0,1/3)--(1,1/3);
	\draw[dashed] (0,2/3)--(1,2/3);
	\draw[dashed] (1/3,0)--(1/3,1);
	\draw[dashed] (2/3,0)--(2/3,1);
	\node[left] at (0,1/3) {\small $1/3$};
	\node[left] at (0,2/3) {\small $2/3$};
	\node[below] at (1/3,0) {\small $1/3$};
	\node[below] at (2/3,0) {\small $2/3$};
	\end{tikzpicture}
	\caption{Graphs of piecewise linear maps $f$ and $g$ and the corresponding $g^{-1}\circ f$. Note that $(1/3,1/3)\in\Gamma(g^{-1}\circ f)$, and since $1/3\in C_f$ and $1/3\not\in C_g$, it is a hat of $\Gamma(g^{-1}\circ f)$. Note that $f$ and $g$ in this figure do not strongly commute.}
	\label{fig:hat}
\end{figure}

\begin{lemma}\label{lem:hat}
	Let $(x,y)\in\Gamma(g^{-1}\circ f)$ be a hat. Then there is an open disc $\mathcal{O}\ni (x,y)$ in $I^2$ such that $\Gamma(g^{-1}\circ f)\cap\mathcal{O}$ is an open arc $A\subset\Gamma(g^{-1}\circ f)$ for which $y$ is an extremum of $\pi_2(A)$.
\end{lemma}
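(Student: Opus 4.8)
The plan is to exploit the parametrization of the graph. By Remark~\ref{rem:graphconnected}, $\Gamma(g^{-1}\circ f)=\Gamma(f\circ g^{-1})=\{(g(t),f(t)):t\in I\}$, so a point $(x,y)$ in the graph corresponds to (one or more) parameters $t$ with $g(t)=x$ and $f(t)=y$. Since $(x,y)$ is a hat, $x\in C_f$ and $y\notin C_g$. First I would observe that $x\in C_f$ forces every $t$ with $f(t)=x$ to lie in $C_f$ as well? No — rather, I need the parameter: the point $(x,y)$ sits over the first coordinate $x=g(t)$, so the relevant parameter $t$ satisfies $g(t)=x\in C_f$ and $f(t)=y\notin C_g$. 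The condition $y=f(t)\notin C_g$ means that near such a $t$, $g$ is locally injective? No: $y\notin C_g$ means $g$ is locally injective near any point $s$ with... hmm, $C_g$ is the critical set of $g$, so $y\notin C_g$ tells us about the behaviour of $g$ \emph{at the point} $y$, not at $t$.

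Let me restart the parametrization bookkeeping. A cleaner approach: since $(x,y)\in\Gamma(g^{-1}\circ f)$, we have $y\in g^{-1}(f(x))$, i.e. $g(y)=f(x)$. The hat condition is $x\in C_f$, $y\notin C_g$. I would first use $y\notin C_g$: there is an open interval $V\ni y$ on which $g$ is strictly monotone, hence a homeomorphism onto an open interval $g(V)\ni g(y)=f(x)$. Next, since $x\in C_f$ but $f$ is piecewise monotone, there is an open interval $U\ni x$ such that $x$ is the unique critical point of $f$ in $U$, and $f(x)$ is a (strict) local extremum of $f|_U$ — say a local max; shrinking $U$ we may assume $f(U)\subseteq g(V)$ and that $f(x)=\max f|_U$ is attained only at $x$. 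Then on $U$, the set-valued map $g^{-1}\circ f$ restricted near $(x,y)$ is single-valued and equals $(g|_V)^{-1}\circ f|_U$, a genuine map $U\to V$; call it $h$. The relevant piece of the graph is $A=\{(u,h(u)):u\in U\}$, which is an arc, and I would take $\mathcal O$ to be a small disc about $(x,y)$ meeting $\Gamma(g^{-1}\circ f)$ in exactly this arc.

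The content then is that $h(x)=y$ is an extremum of $h$ on $U$: this follows because $h=(g|_V)^{-1}\circ f|_U$ is a composition of $f|_U$ (which has a strict local max at $x$) with the monotone homeomorphism $(g|_V)^{-1}$, and post-composing with a monotone homeomorphism preserves the property of having a local extremum (turning a max into a max or a min according to whether $g|_V$ is increasing or decreasing). Hence $y=h(x)$ is an extremum of $\pi_2(A)=h(U)$. I would also need to check that $\mathcal O$ can be chosen so that $\Gamma(g^{-1}\circ f)\cap\mathcal O$ is \emph{exactly} $A$ and contains no other sheets of the graph — this is where a little care is needed, since $\Gamma(g^{-1}\circ f)$ may have several branches passing near the vertical line $\{x\}\times I$; but those other branches pass through points $(x,y')$ with $g(y')=f(x)$ and $y'\ne y$, and since $g^{-1}(f(x))$ is finite (as $g$ is piecewise monotone, hence nowhere constant) we can separate them by choosing $\mathcal O$ small enough.

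\textbf{Main obstacle.} The routine parts are the monotonicity/extremum bookkeeping. The step requiring the most attention is verifying that $\Gamma(g^{-1}\circ f)\cap\mathcal O$ is precisely the single arc $A$ with no extra points: one must rule out that the graph, near $(x,y)$, contains points coming from parameters far from the local picture around $x$, and one must confirm that the closedness of $\Gamma(g^{-1}\circ f)$ in $I^2$ (Remark~\ref{rem:graphconnected}) together with finiteness of the fibres $g^{-1}(f(x))$ and $f^{-1}(\text{nearby values})$ isolates the branch. I do not expect to need strong commutativity for this lemma — only that $f,g$ are piecewise monotone (hence nowhere constant) — so the proof should go through for the general set-valued map $g^{-1}\circ f$.
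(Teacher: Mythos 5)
Your proposal is correct and follows essentially the same route as the paper: restrict $g$ to a monotone neighbourhood $V$ of $y$, restrict $f$ to a neighbourhood $U$ of $x$ containing no other critical point with $f(U)$ inside $g(V)$, and observe that the graph locally is the graph of $(g|_V)^{-1}\circ f|_U$, whose second coordinate inherits an extremum at $x$ from the local extremum of $f$. The "no extra sheets" issue you flag is handled in the paper simply by taking $\mathcal{O}=U\times V$ (a topological disc), so that the second coordinate is confined to $V$, where $g$ is injective, and the intersection is automatically the single arc.
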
 
\begin{proof}
	Since $(x,y)$ is a hat of $\Gamma(g^{-1}\circ f)$, then $f(x)=g(y)$, and $x\in C_f, y\not\in C_g$. Let $J\subset I$ be an open connected set such that $J\ni f(x)=g(y)$, and denote by $U$ the connected component of $f^{-1}(J)$ which contains $x$, and by $V$ the connected component of $g^{-1}(J)$ which contains $y$. We can assume that $U\cap C_f=\{x\}$, and $V\cap C_g=\emptyset$.  Then obviously $U$ and $V$ are open intervals in $I$, and $g|_V\colon V\to J$ is a homeomorphism.
	
	Let $\mathcal{O}=U\times V$, then $\mathcal{O}$ is an open disc in $I^2$ and $(x,y)\in\mathcal{O}$. Note that $A:=\mathcal{O}\cap\Gamma(g^{-1}\circ f)=\{(t, g^{-1}(f(t))\cap V): t\in U\}$. Furthermore, we claim that $\phi\colon U\to A$ given by $\phi(t)=(t, g^{-1}(f(t))\cap V)$ is a homeomorphism. First take $t\in U$, so $f(t)\in J$. Then recall that $g|_V\colon V\to J$ is a homeomorphism, so there is a unique $s\in V$ such that $g(s)=f(t)$. Thus $\phi$ is well-defined. It is straight-forward to see that $\phi$ is one-to-one and onto. Continuity is also easily checked: if we denote by $\tilde g\colon J\to V$ the inverse of $g|_V$, then we can write $\phi(t)=(t,\tilde g(f(t)))$, and that map is obviously continuous. Thus, $A$ can be parametrized by $U$, so it is an open arc which contains $(x,y)$.
		
	Assume first that $x$ is a local maximum of $f$, so $f(x')<f(x)$ for all $x\neq x'\in U$. If $g|_V$ is increasing, then so is its inverse $\tilde g$, so $\tilde g(f(x'))<\tilde g(f(x))=y$ for all $x'\neq x\in U$. If $g|_V$ is decreasing, then so is its inverse  $\tilde g$, so $\tilde g(f(x'))>\tilde g(f(x))=y$ for all $x'\neq x\in U$. Thus $y$ is an extremum of $\pi_2(A)$. We argue similarly if $x$ is a local minimum of $f$.
\end{proof}

\begin{definition}
	A hat $(x,y)\in\Gamma(g^{-1}\circ f)$ is called an {\em end-hat} if $(x,y)=(g(0),f(0))=(g(1),f(1))$.
\end{definition}

\begin{lemma}\label{lem:param}
	If $g^{-1}\circ f=f\circ g^{-1}$, then for every hat $(x,y)\in\Gamma(g^{-1}\circ f)$ there is either $t=t(x,y)\in C_f$ such that $(x,y)=(g(t),f(t))$, or $(x,y)$ is an end-hat.
\end{lemma}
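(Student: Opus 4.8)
\emph{Proof proposal.} The plan is to parametrize the graph and track preimages of the hat. Since $g^{-1}\circ f=f\circ g^{-1}$, Remark~\ref{rem:graphconnected} gives $\Gamma(g^{-1}\circ f)=\Gamma(f\circ g^{-1})=\{(g(t),f(t)):t\in I\}$, so the continuous map $\Phi\colon I\to\Gamma(g^{-1}\circ f)$, $\Phi(t)=(g(t),f(t))$, is onto. For the hat $(x,y)$ put $P=\Phi^{-1}(\{(x,y)\})=\{t\in I: g(t)=x,\ f(t)=y\}$; this is a nonempty closed subset of $I$. I want to show that either $P\cap C_f\neq\emptyset$ — in which case any $t\in P\cap C_f$ witnesses the first alternative — or $P=\{0,1\}$, which is exactly the statement that $(x,y)=(g(0),f(0))=(g(1),f(1))$, i.e. that $(x,y)$ is an end-hat.

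The first key step is: every $t_0\in P\cap(0,1)$ lies in $C_f$. Suppose not. Then $f$ is strictly monotone on some open interval $(t_0-\delta,t_0+\delta)\subseteq(0,1)$. Apply Lemma~\ref{lem:hat} to the hat $(x,y)$ to obtain an open disc $\mathcal{O}\ni(x,y)$ with $A:=\mathcal{O}\cap\Gamma(g^{-1}\circ f)$ an open arc on which $y$ is an extremum of $\pi_2(A)$. Shrinking $\delta$ we may assume $\Phi((t_0-\delta,t_0+\delta))\subseteq\mathcal{O}$, hence $\subseteq A$. But $\pi_2\circ\Phi=f$ is strictly monotone on $(t_0-\delta,t_0+\delta)$ with $f(t_0)=y$, so $f$ assumes values both strictly above and strictly below $y$ there; thus $\pi_2(A)$ does too, contradicting that $y$ is an extremum of $\pi_2(A)$. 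Hence $t_0\in C_f$.

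If $P\cap(0,1)\neq\emptyset$ we are done, so assume $P\subseteq\{0,1\}$ (recall $C_f\subseteq(0,1)$, so this is the only remaining possibility), and suppose toward a contradiction that $P\neq\{0,1\}$, say $P=\{0\}$. Since $0\notin C_f$ and $f$ is piecewise monotone, $f$ is strictly monotone on $[0,c_1)$. With $A$ as above, $A\setminus\{(x,y)\}$ has exactly two components $A_1,A_2$, each having $(x,y)$ in its closure; choosing sequences in $A_i$ converging to $(x,y)$ and pulling them back by the continuous surjection $\Phi$ (using compactness of $I$ and $P=\{0\}$) shows that $0\in\overline{\Phi^{-1}(A_1)}\cap\overline{\Phi^{-1}(A_2)}$. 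Hence for every $\delta>0$ the interval $(0,\delta)$ meets both of the disjoint open sets $\Phi^{-1}(A_1)$ and $\Phi^{-1}(A_2)$. Shrinking $\delta$ so that $\Phi([0,\delta))\subseteq\mathcal{O}$, we get $\Phi((0,\delta))\subseteq A\setminus\{(x,y)\}=A_1\sqcup A_2$ (note $P\cap(0,\delta)=\emptyset$), so the connected set $(0,\delta)$ lies entirely in $\Phi^{-1}(A_1)$ or in $\Phi^{-1}(A_2)$ — contradiction. The same argument with $[0,c_1)$ replaced by $(c_n,1]$ rules out $P=\{1\}$. Therefore $P=\{0,1\}$ and $(x,y)$ is an end-hat.

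The routine parts — that $\Phi$ is continuous and onto, that $f$ is strictly monotone on $[0,c_1)$ and $(c_n,1]$, and the standard closed-map/compactness bookkeeping in the pullback argument — are easy. The step I expect to require the most care is ruling out $P=\{0\}$ and $P=\{1\}$: the extremum obstruction coming from Lemma~\ref{lem:hat} is "one-sided" at the endpoints of $I$ and so gives no contradiction there directly, and one genuinely needs the local arc-structure of $A$ near $(x,y)$ together with connectedness of small one-sided neighborhoods of $0$ (resp. $1$).
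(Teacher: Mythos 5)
Your proposal is correct and follows essentially the same route as the paper: parametrize $\Gamma(g^{-1}\circ f)=\Gamma(f\circ g^{-1})$ by $t\mapsto(g(t),f(t))$, use the local arc structure and the extremality of $y$ from Lemma~\ref{lem:hat} to force any interior preimage of the hat to be a critical point of $f$, and otherwise conclude the hat is an end-hat. Your treatment of the case $P\subseteq\{0,1\}$ merely spells out in more detail the connectivity argument that the paper compresses into one sentence.
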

\begin{proof}
	Fix a hat $(x,y)\in\Gamma(g^{-1}\circ f)$. Recall that $\Gamma(f\circ g^{-1})=\{(a,b): b=f(t), g(t)=a\}=\{(g(t),f(t)): t\in I\}$ and let $T=\{t\in I: (x,y)=(g(t),f(t))\}\neq\emptyset$. 
	
	Assume that there is no $t\in T$ such that $t\in (0,1)$. In that case $T\subset\{0,1\}$. Since $(x,y)$ is contained in an open arc $A$ of $\Gamma(f\circ g^{-1})$ by Lemma~\ref{lem:hat}, and every point of $A$ is of the form $(g(t),f(t))$ for some $t\in I$, we must have  $(x,y)=(g(0),f(0))=(g(1),f(1))$, thus $(x,y)$ is an end-hat.
	
	Assume there is $t\in (0,1)$ such that $t\in T$. By Lemma~\ref{lem:hat}, $f(t)=y$ and $y$ is an extremum of $\pi_2(A)$, so there is a small neighbourhood $U$ of $t$ such that $f(t')<f(t)$ for all $t\neq t'\in U$, or $f(t')>f(t)$ for all $t\neq t'\in U$. Thus $t\in C_f$.
\end{proof}

\begin{remark}\label{rem:hatsnstuff}
	Let $(x,y)\in\Gamma(g^{-1}\circ f)$ be a hat, so it is contained in the interior of an arc $A\subset\Gamma(g^{-1}\circ f)$ by Lemma~\ref{lem:hat}. If $g^{-1}\circ f=f\circ g^{-1}$, then Lemma~\ref{lem:param} implies that we can find $t=t(x,y)\in C_f$ such that $(x,y)=(g(t),f(t))$, or $(x,y)=(g(0),f(0))=(g(1),f(1))$. Thus every hat corresponds to at least one critical point of $f$, or to the set $\{0,1\}$. Obviously that assignment is injective, so the total number of hats in $\Gamma(g^{-1}\circ f)$ is at most $|C_f|+1$.
\end{remark}

\begin{definition}
	A point $(x,y)\in\Gamma(g^{-1}\circ f)$ is called an {\em endpoint of $\Gamma(g^{-1}\circ f)$} if one of the following is satisfied:
	\begin{itemize}
		\item[(a)] $x\in\{0,1\}$, $y\not\in C_g$, or
		\item[(b)] $x\in (0,1)\setminus C_f$, and $y\in\{0,1\}$. 
	\end{itemize}
\end{definition}

\begin{example}
	For maps $f,g$ in Figure~\ref{fig:hat}, the endpoints of $\Gamma(g^{-1}\circ f)$ are exactly $(1/6,1),(1/2,1),(7/9,1),(8/9,0)\in\Gamma(g^{-1}\circ f)$.
\end{example}

\begin{lemma}\label{lem:endpt}
	For every endpoint $(x,y)\in\Gamma(g^{-1}\circ f)$ there is an open disc $\mathcal{O}\ni(x,y)$ in $I^2$ such that $\mathcal{O}\cap\Gamma(g^{-1}\circ f)$ is homeomorphic to a half-open arc $A$ with the endpoint $(x,y)$. Moreover, $x$ is an extremum of all $\pi_1(A)$ and $y$ is an extremum of all $\pi_2(A)$. 
\end{lemma}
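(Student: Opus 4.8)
Adapting the local analysis from the proof of Lemma~\ref{lem:hat}, treating the two coordinates symmetrically. First I would observe that in both cases (a) and (b) of the definition of an endpoint we have $x\notin C_f$ and $y\notin C_g$: in case (a) because $C_f\subseteq(0,1)$ while $x\in\{0,1\}$, and in case (b) because $C_g\subseteq(0,1)$ while $y\in\{0,1\}$. Since $f$ is piecewise monotone it is monotone on some neighbourhood $U_0$ of $x$ in $I$ — a one-sided neighbourhood if $x\in\{0,1\}$, a genuine open interval if $x\in(0,1)$ — and likewise $g$ is monotone on a neighbourhood $V_0$ of $y$; put $c:=f(x)=g(y)$. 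Thus $f|_{U_0}$ and $g|_{V_0}$ are homeomorphisms onto intervals $f(U_0)\ni c$ and $g(V_0)\ni c$, where $c$ is an endpoint of $f(U_0)$ precisely when $x\in\{0,1\}$ (otherwise it is interior), and similarly for $g(V_0)$ and $y$.

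Next, for $\mathcal{O}:=U\times V$ with $U\subseteq U_0$ and $V\subseteq V_0$ any neighbourhoods of $x$ and $y$, one checks (by the same computation as in the proof of Lemma~\ref{lem:hat}) that $\mathcal{O}\cap\Gamma(g^{-1}\circ f)=\{(s,t)\in U\times V: f(s)=g(t)\}$ consists precisely of the points $((f|_U)^{-1}(w),(g|_V)^{-1}(w))$ for $w$ ranging over the interval $K:=f(U)\cap g(V)$, and that $w\mapsto((f|_U)^{-1}(w),(g|_V)^{-1}(w))$ is a homeomorphism of $K$ onto $\mathcal{O}\cap\Gamma(g^{-1}\circ f)$. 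In case (a) the interval $f(U)$ has $c$ as an endpoint (since $x\in\{0,1\}$), and in case (b) $g(V)$ does (since $y\in\{0,1\}$); in either case $K$ is therefore a half-open interval with endpoint $c$, \emph{provided} $K\neq\{c\}$. Granting this non-degeneracy, $\mathcal{O}\cap\Gamma(g^{-1}\circ f)$ is a half-open arc $A$ whose endpoint is the image of $w=c$, namely $((f|_U)^{-1}(c),(g|_V)^{-1}(c))=(x,y)$. Moreover $\pi_1(A)=(f|_U)^{-1}(K)$ is a half-open interval with endpoint $x$ and $\pi_2(A)=(g|_V)^{-1}(K)$ is a half-open interval with endpoint $y$, so $x$ and $y$ are extrema of $\pi_1(A)$ and $\pi_2(A)$ respectively, as required.

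Thus everything reduces to excluding $K=\{c\}$, which I expect to be the only real obstacle. This could happen only if $x\in\{0,1\}$ \emph{and} $y\in\{0,1\}$ — otherwise one of $f(U)$, $g(V)$ is a full neighbourhood of $c$ and $K$ is automatically non-degenerate — and only if, in addition, $f$ near $x$ and $g$ near $y$ are monotone in ``opposite directions'' relative to $c$; and this last configuration cannot arise when $c\in\{0,1\}$, since then $f(I),g(I)\subseteq[0,1]$ forces $f$ near $x$ and $g$ near $y$ to be monotone towards the interior of $[0,1]$, i.e.\ to the ``same side'' of $c$. Suppose $K=\{c\}$ nonetheless. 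Then, by the description of $\mathcal{O}\cap\Gamma(g^{-1}\circ f)$ above, $(x,y)$ is an isolated point of $\Gamma(g^{-1}\circ f)$. Since $f$ and $g$ strongly commute, $\Gamma(g^{-1}\circ f)=\Gamma(f\circ g^{-1})$, which by Remark~\ref{rem:graphconnected} is a continuum; as $f$ is non-constant this continuum has more than one point and hence no isolated points — a contradiction. This rules out $K=\{c\}$ and completes the argument.
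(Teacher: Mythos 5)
Your proof is correct and follows essentially the same route as the paper's: restrict $f$ and $g$ to critical-point-free neighbourhoods $U\ni x$, $V\ni y$, and parametrize $(U\times V)\cap\Gamma(g^{-1}\circ f)$ homeomorphically, reading off the half-open arc and the extremality of $x$ and $y$ from the monotonicity of $f|_U$ and $g|_V$. The one place you go beyond the paper is the degenerate case $K=f(U)\cap g(V)=\{c\}$ (possible only when both $x,y\in\{0,1\}$ and $f$, $g$ are locally monotone to opposite sides of $c$): the paper simply asserts that $f|_U$ and $g|_V$ are homeomorphisms \emph{onto} $J$, which cannot literally hold when $x$ or $y$ is a boundary point, and it does not address the resulting possibility of an isolated point; you rule it out correctly via the connectedness of $\Gamma(f\circ g^{-1})=\Gamma(g^{-1}\circ f)$ from Remark~\ref{rem:graphconnected}. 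This is a legitimate patch of a small gap; the rest of your argument matches the paper's.
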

\begin{proof}
	We proceed as in the proof of Lemma~\ref{lem:hat}. Note that $f(x)=g(y)$ and that $x\not\in C_f, y\not\in C_f$. Let $J\subset I$ be a connected open set such that $f(x)=g(y)\in J\subset I$. Let $U\subset I$ be a connected component of $f^{-1}(J)$ which contains $x$, and let $V\subset I$ be a connected component of $g^{-1}(J)$ which contains $y$. We can choose $J$ such that $U\cap C_f=V\cap C_g=\emptyset$, so that $f|_U\colon U\to J$ and $g|_V\colon V\to J$ are homeomorphisms. Also $\mathcal{O}:=U\times V$ is an open disc in $I^2$, and $A:=\mathcal{O}\cap\Gamma(g^{-1}\circ f)=\{(t,g^{-1}(f(t))\cap V): t\in U\}=\{(f^{-1}(g(s))\cap U, s): s\in V\}$. As in the proof of Lemma~\ref{lem:hat} we see that $\phi\colon U\to A$ given by $\phi(t)=(t,g^{-1}(f(t))\cap V)$ is a well-defined homeomorphism, and so is $\psi\colon V\to A$ given by $\psi(s)=(f^{-1}(g(s))\cap U,s)$.
	
	Assume that $(x,y)$ is an endpoint of type (a), so $x\in\{0,1\}$. Then, since $U\ni x$ (and assuming that $U\subsetneq I$), we conclude that $A$ is homeomorphic to a half-open interval. Moreover, $\pi_1(A)=U$, so obviously $x$ is an extremum of $\pi_1(A)$. To see that $y$ is an extremum of $\pi_2(A)$, recall that $g|_V\colon V\to J$ is monotone, and thus $g^{-1}(f(x))\cap V=y$ is an extremum of $g^{-1}(f(x'))$ for all $x\neq x'\in U$.
	
	If $(x,y)$ is of type (b), then $y\in\{0,1\}$. Thus, since $V\ni y$, assuming that $V$ contains exactly one of $0,1$, we again conclude that $A$ homeomorphic to a half-open interval. Since $f|_U\colon U\to J$ is monotone, as in the previous paragraph we conclude that $x$ is an extremum of $\pi_1(A)$, and $y$ is an extremum of $\pi_2(A)$.
\end{proof}

\begin{lemma}
	If $f\circ g=g\circ f$, then $\Gamma(g^{-1}\circ f)$ contains at least two endpoints.
\end{lemma}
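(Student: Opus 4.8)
The plan is to regard $\Gamma:=\Gamma(g^{-1}\circ f)=\{(x,y)\in I^2:f(x)=g(y)\}$ as a finite topological graph and to count its valence‑one vertices. Recall that, since $f,g$ strongly commute, $\Gamma=\Gamma(f\circ g^{-1})=\{(g(t),f(t)):t\in I\}$, so by Remark~\ref{rem:graphconnected} $\Gamma$ is connected, closed, non‑degenerate and one–dimensional; in particular it has no isolated points, and it meets all four sides of $I^2$ because $f,g$ are onto (the sets $g^{-1}(f(0)),g^{-1}(f(1)),f^{-1}(g(0)),f^{-1}(g(1))$ are all nonempty). Since $f$ and $g$ are piecewise monotone, the only points of $\Gamma$ that can fail to have a neighbourhood in $\Gamma$ homeomorphic to an open arc lie over $\{0,1\}\cup C_f$ in the first coordinate or over $\{0,1\}\cup C_g$ in the second, and there are only finitely many such points in $\Gamma$; hence $\Gamma$ is a finite graph. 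Its vertices have valence $2$ when they are hats of $\Gamma(g^{-1}\circ f)$ (Lemma~\ref{lem:hat}) or, by symmetry, hats of $\Gamma(f^{-1}\circ g)$, or interior points with $x\notin C_f$ and $y\notin C_g$; they have valence $4$ (valence $0$ being excluded, as there are no isolated points) at the finitely many points of $C_f\times C_g$ on $\Gamma$; and they have valence $1$ exactly when they are \emph{endpoints} of $\Gamma$ in the sense of the definition (valence $1$ by Lemma~\ref{lem:endpt}, and conversely a valence‑one point must, by the above, lie on $\partial I^2$ and satisfy precisely the non‑criticality condition (a) or (b)). This valence bookkeeping is routine from the monotone‑piece structure of $f$ and $g$.

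Granting this, the handshake lemma does half the work: the sum of the valences over the vertices of the finite graph $\Gamma$ is even, and every vertex of valence $2$ or $4$ contributes an even amount, so the number of valence‑one vertices — that is, the number of endpoints of $\Gamma$ — is even. It therefore suffices to exhibit \emph{one} endpoint.

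To produce an endpoint I work along the sides of $I^2$. A point $(0,y)\in\Gamma$ with $y\notin C_g$ is an endpoint of type (a), so the left side supplies an endpoint unless $g^{-1}(f(0))\subseteq C_g$; likewise the right side fails only if $g^{-1}(f(1))\subseteq C_g$, the bottom side only if $f^{-1}(g(0))\subseteq C_f$, and the top side only if $f^{-1}(g(1))\subseteq C_f$. The elementary input is: for a piecewise monotone onto map $h\colon I\to I$ and $v\in I$, $h^{-1}(v)\subseteq C_h$ forces $v\in\{0,1\}$, and then $h^{-1}(0)\subseteq C_h$ iff $h(0),h(1)>0$, and $h^{-1}(1)\subseteq C_h$ iff $h(0),h(1)<1$. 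Indeed, if $v\in(0,1)$ and $h^{-1}(v)\subseteq C_h$, no maximal interval of monotonicity of $h$ may contain $v$ in the interior of its range, so each such interval lies entirely above or entirely below $v$; where an ``above'' interval abuts a ``below'' interval, $h$ is monotone through the shared point (both increasing or both decreasing), so that point maps to $v$ yet is not a critical point — a contradiction — and hence all intervals lie on one side of $v$, contradicting surjectivity; the boundary statements are immediate. Now suppose all four sides fail. Then $g^{-1}(f(0)),g^{-1}(f(1))\subseteq C_g$ gives $f(0),f(1)\in\{0,1\}$, and $f^{-1}(g(0)),f^{-1}(g(1))\subseteq C_f$ gives $g(0),g(1)\in\{0,1\}$; feeding these through the ``iff'' clauses forces either $f(0)=f(1)=0,\ g(0)=g(1)=1$, or $f(0)=f(1)=1,\ g(0)=g(1)=0$. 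Both are impossible, since $f\circ g=g\circ f$ yields $f(g(0))=g(f(0))$, which reads $0=1$ in the first case and $1=0$ in the second. So some side of $I^2$ contains an endpoint of $\Gamma$.

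Combining the last two paragraphs, $\Gamma$ has a positive and even number of endpoints, hence at least two, which is the assertion. The main obstacle is the argument of the third paragraph: the parity step is cheap, but excluding the degenerate possibility that $\Gamma$ touches every side of the square only along critical fibres genuinely uses both surjectivity of $f,g$ and the relation $f\circ g=g\circ f$; a secondary nuisance is the valence bookkeeping of the first paragraph, which, though routine given Lemmas~\ref{lem:hat} and~\ref{lem:endpt}, has to be handled with care at $\partial I^2$.
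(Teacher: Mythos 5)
Your proof is correct, but the second half is genuinely different from the paper's. Both arguments share the same first move: an endpoint appears on a side of $I^2$ unless the corresponding fibre ($g^{-1}(f(0))$, $g^{-1}(f(1))$, $f^{-1}(g(0))$, $f^{-1}(g(1))$) is contained in the critical set, and the elementary fact that for a piecewise monotone onto map this forces the value into $\{0,1\}$; both also close the final degenerate case with $f(g(0))=g(f(0))$. Where you diverge is in getting from ``at least one'' to ``at least two'': the paper assumes there is at most one endpoint and runs an exhaustive case analysis on $f(0),f(1),g(0),g(1)$ to exhibit a second endpoint in every case, whereas you prove a parity statement (the number of endpoints is even) via the handshake lemma on $\Gamma$ viewed as a finite graph. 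Your route is conceptually cleaner and the valence classification (degree $1$ exactly at endpoints, degree $2$ at hats of either composition and at regular points, degree $4$ at points of $C_f\times C_g$ on $\Gamma$) does check out, including the boundary cases; but note two costs. First, excluding degree $0$ is essential — a corner such as $(0,0)$ with $f$ increasing and $g$ decreasing at $0$ would be an isolated point of $\Gamma(g^{-1}\circ f)$ that is still an endpoint by the definition, breaking the identification of endpoints with degree-one vertices — so your argument genuinely needs $\Gamma(g^{-1}\circ f)=\Gamma(f\circ g^{-1})$ to be connected, i.e.\ strong commutativity (legitimate under the paper's standing assumptions, but the paper's own proof uses only plain commutativity). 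Second, the ``routine'' valence bookkeeping is the real load-bearing step and deserves to be written out, e.g.\ by noting that the local degree equals $\sum_{s\in\{+,-\}}(\#\ f\text{-branches at }x\text{ covering side }s\text{ of }f(x))\cdot(\#\ g\text{-branches at }y\text{ covering side }s)$, from which the table of degrees $\{0,1,2,4\}$ and the identification of degree-one points with endpoints follow directly.
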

\begin{proof}
	Assume that $\Gamma(g^{-1}\circ f)$ has zero or one endpoint. Since $g$ is onto, it follows from the Intermediate Value Theorem  that if $g^{-1}(z)\subset C_g\cup\{0,1\}$, then $z\in\{0,1\}$. Assume that $f(0)=z\in(0,1)$. Then $g^{-1}(f(0))\not\subset C_g$, so the set $(0,g^{-1}(f(0)))$ contains an endpoint of $\Gamma(g^{-1}\circ f)$ of type $(a)$. Similarly, if $f(1)\in(0,1)$, the set $(1,g^{-1}(f(1)))$ contains an endpoint of $\Gamma(g^{-1}\circ f)$ of type $(a)$. Since we assumed that there is at most one endpoint, we conclude that at least one of $f(0),f(1)$ is in $\{0,1\}$.
	
	First, assume that exactly one of $f(0),f(1)$ is in $\{0,1\}$. Without loss of generality, assume $f(1)\in (0,1)$. Then there is $y\not\in C_g\cup\{0,1\}$ such that $g(y)=f(1)$, so $(1,y)\in\Gamma(g^{-1}\circ f)$ is an endpoint. 
	
	Assume that $g(0)\in (0,1)$. Then $f^{-1}(g(0))\not\subset C_f$, so there is $z\not\in C_f$ such that $f(z)=g(0)$, so $(z,0)$ is an endpoint of $\Gamma(g^{-1}\circ f)$. Since $y\neq 0$, we conclude that $(1,y)\neq (z,0)$, so we have found at least two endpoints, which is a contradiction. On the other hand, if $g(1)\in(0,1)$, then there is $z\not\in C_f$ such that $f(z)=g(1)$, so $(z,1)$ is an endpoint of $\Gamma(g^{-1}\circ f)$. Since also $y\neq 1$, we conclude that $(1,y)\neq (z,1)$, and conclude that both $g(0),g(1)$ are in $\{0,1\}$. 
	
	If $g(0)=f(0)$ or $g(1)=f(0)$, then $(0,0)$ or $(0,1)$ is a second endpoint, which is a contradiction. Thus, it must be that $g(0)=g(1)\neq f(0)$. However, in that case there are at least two $y_1,y_2\not\in C_g$ such that $g(y_1)=g(y_2)=f(1)$, thus both $(1,y_1),(1,y_2)\in\Gamma(g^{-1}\circ f)$ are endpoints, which is  another contradiction.
	
	Now assume that both $f(0),f(1)$ are in $\{0,1\}$. Then we first conclude that at least one of $g(0),g(1)$ is in $\{0,1\}$, otherwise we find two endpoints. If exactly one of $g(0),g(1)$ is not in $\{0,1\}$, we repeat the previous paragraph with $f$ and $g$ swapped, and again find two endpoints. Thus, both $g(0),g(1)$ have to be in $\{0,1\}$. Note that if $f(i)=g(j)$ for $i,j\in\{0,1\}$, then $(i,j)\in\Gamma(g^{-1}\circ f)$ is an endpoint. Thus if $f(0)\neq f(1)$, then there are at least two endpoints, which is a contradiction. So, assume that $f(0)=f(1)$. Then $g(0)=g(1)\neq f(0)=f(1)$. Let $g(0)=g(1)=i\in\{0,1\}$, and $f(0)=f(1)=1-i$. Then $1-i=f(i)=f(g(0))=g(f(0))=g(i)=i$, which is a contradiction.
\end{proof}

\begin{lemma}\label{lem:endpoints}
	If $g^{-1}\circ f=f\circ g^{-1}$, then
	every endpoint $(x,y)\in\Gamma(g^{-1}\circ f)$ is of the form $(x,y)=(g(t),f(t))$, where $t\in\{0,1\}\cup(C_f\cap C_g)$.
\end{lemma}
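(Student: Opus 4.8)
The plan is to follow the template of Lemma~\ref{lem:param}. Fix an endpoint $(x,y)\in\Gamma(g^{-1}\circ f)$. Since $g^{-1}\circ f=f\circ g^{-1}$, Remark~\ref{rem:graphconnected} gives $\Gamma(g^{-1}\circ f)=\Gamma(f\circ g^{-1})=\{(g(t),f(t)):t\in I\}$, so the set $T:=\{t\in I:(x,y)=(g(t),f(t))\}$ is non-empty. If $T\subseteq\{0,1\}$, any $t\in T$ already gives the desired representation. So I may assume there is $t\in T\cap(0,1)$, and it then suffices to prove $t\in C_f\cap C_g$.

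Apply Lemma~\ref{lem:endpt} to get an open disc $\mathcal{O}\ni(x,y)$ such that $A:=\mathcal{O}\cap\Gamma(g^{-1}\circ f)$ is a half-open arc with endpoint $(x,y)$, with $x$ an extremum of $\pi_1(A)$ and $y$ an extremum of $\pi_2(A)$. Since $s\mapsto(g(s),f(s))$ is continuous and maps $t\in(0,1)$ to $(x,y)\in\mathcal{O}$, choose an open interval $W\ni t$ with $W\subseteq(0,1)$ and $\{(g(s),f(s)):s\in W\}\subseteq\mathcal{O}$. Each of these points lies in $\Gamma(f\circ g^{-1})=\Gamma(g^{-1}\circ f)$, hence in $A$; therefore $g(W)\subseteq\pi_1(A)$ and $f(W)\subseteq\pi_2(A)$, while $x=g(t)\in g(W)$ and $y=f(t)\in f(W)$.

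Now $x$ is an extremum of $\pi_1(A)\supseteq g(W)$ and $x\in g(W)$, so $g(t)$ is an extremum of $g$ on the open interval $W$, i.e. $t$ is an interior local extremum of $g$. Since $g$ is piecewise monotone, hence nowhere constant, there is $\delta>0$ with $(t-\delta,t+\delta)\subseteq W$ containing no critical point of $g$ other than possibly $t$, and on this interval $g$ is strictly monotone on $(t-\delta,t]$ and on $[t,t+\delta)$; an extremum at $t$ forces opposite monotonicities on the two sides, so $g$ is not injective on any neighbourhood of $t$, i.e. $t\in C_g$. The same reasoning applied to $y$, $\pi_2(A)$ and $f$ shows $t\in C_f$. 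Hence $t\in C_f\cap C_g$ and $(x,y)=(g(t),f(t))$ with $t\in\{0,1\}\cup(C_f\cap C_g)$.

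The only step needing care is deducing ``$t\in C_g$'' (and ``$t\in C_f$'') from ``$t$ is an interior local extremum'': nowhere-constancy of $f$ and $g$ is precisely what excludes a flat piece on one side of $t$, which would otherwise allow an interior extremum without loss of injectivity. Everything else is a direct combination of Lemma~\ref{lem:endpt} with the parametrization $\Gamma(f\circ g^{-1})=\{(g(t),f(t)):t\in I\}$.
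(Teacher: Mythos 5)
Your proof is correct and follows essentially the same route as the paper: parametrize the graph as $\{(g(t),f(t)):t\in I\}$, take $t\in(0,1)$ with $(x,y)=(g(t),f(t))$, pull the neighbourhood $\mathcal{O}=U\times V$ from Lemma~\ref{lem:endpt} back to an interval $W\ni t$ by continuity, and conclude from the extremum properties of $\pi_1(A),\pi_2(A)$ that $t\in C_f\cap C_g$. The only difference is that you spell out the final step (interior local extremum implies critical point) more explicitly than the paper does.
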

\begin{proof}
	Since $\Gamma(f\circ g^{-1})=\{(g(t),f(t)): t\in I\}$, there is $t\in I$ such that $(x,y)=(g(t),f(t))$. If $t\not\in\{0,1\}$, then there is an open $J\subset I$ which contains $t$ and such that $(g(t),f(t))\in U\times V$ for every $t\in J$, where $\mathcal{O}=U\times V$ is a neighbourhood from the proof of Lemma~\ref{lem:endpt}. Thus $f(t)$ is an extremum of $f(J)$ and $g(t)$ is an extremum of $g(J)$, so $t\in C_f\cap C_g$. 
\end{proof}

\begin{remark}\label{rem:nbrs}
Since every endpoint corresponds to a point in $C_f\cup\{0,1\}$, 
we conclude that the sum of number of hats and number of endpoints in $\Gamma(g^{-1}\circ f)=\Gamma(f\circ g^{-1})$ must not exceed $|C_f|+2$. Moreover, note that if there exists an end-hat, then every endpoint $(x,y)$ is of the form $(g(t),f(t))$, where $t\in C_f\cap C_g$. In that case, the number of hats and endpoints must not exceed $|C_f| +1$. 
\end{remark}

For every $c_i\in C_f$, $1\leq i\leq n$, denote the number of hats $(c_i,y)$ by $h_i\geq 0$.
Note that it is the number of non-critical points in $g^{-1}(f(c_i))$. Furthermore, recall that endpoints of $\Gamma(g^{-1}\circ f)$ can only be points $(x,y)$ such that $x\not\in C_f$. Denote by $e_0:=\{(x,y) \text{ endpoint of } \Gamma(g^{-1}\circ f): x\in [0,c_1)\}$, $e_i:=\{(x,y) \text{ endpoint of } \Gamma(g^{-1}\circ f): x\in (c_i,c_{i+1})\}$, for $i<n$, and $e_n:=\{(x,y) \text{ endpoint of } \Gamma(g^{-1}\circ f): x\in (c_n,1]\}$. Then Remark~\ref{rem:nbrs} implies
\begin{equation}\label{eq:1}
\sum_{i=1}^n h_i + \sum_{j=0}^n e_j \leq|C_f|+2= n+2.
\end{equation}

Moreover, if there is an end-hat, then
\begin{equation}\label{eq:2}
\sum_{i=1}^n h_i + \sum_{j=0}^n e_j \leq |C_f|+1= n+1.
\end{equation}

\begin{lemma}\label{lem:bunchofinequalities}
	Let $f,g\colon I\to I$ be piecewise monotone and onto maps. Let $|C_f|=n$, and denote by $\{h_i\}_{i=1}^n$ and $\{e_i\}_{j=0}^n$ the numbers of hats and endpoints of $\Gamma(g^{-1}\circ f)$ as before. Then 
	$$e_0+h_1\geq 2,$$
	$$h_i+e_i+h_{i+1}\geq 2, \text{ for every } 1\leq i<n,$$
	$$h_n+e_n\geq 2.$$	
\end{lemma}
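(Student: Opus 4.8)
The plan is to establish all three inequalities uniformly, by localizing to a single vertical strip over a maximal interval of monotonicity of $f$. Write $\Gamma:=\Gamma(g^{-1}\circ f)=\{(x,y)\in I^2:f(x)=g(y)\}$, and for $0\le i\le n$ put $J_i:=f([c_i,c_{i+1}])$ (with $c_0=0,\,c_{n+1}=1$ as usual). Since $f$ has no critical point in $(c_i,c_{i+1})$, the restriction $f|_{[c_i,c_{i+1}]}$ is strictly monotone, hence a homeomorphism onto the nondegenerate closed interval $J_i$. Therefore $(x,y)\mapsto(f(x),y)$ carries $\Gamma\cap([c_i,c_{i+1}]\times I)$ homeomorphically onto $\{(g(y),y):y\in g^{-1}(J_i)\}$, and composing with the projection onto the second coordinate identifies $\Gamma\cap([c_i,c_{i+1}]\times I)$ with $g^{-1}(J_i)$ --- a finite disjoint union of closed (possibly degenerate) intervals. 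Each connected component $K=[\alpha,\beta]$ of $g^{-1}(J_i)$ thus gives an arc (a point, if $\alpha=\beta$) $\widetilde K=\{(\xi_i(y),y):y\in K\}\subseteq\Gamma$, where $\xi_i:=(f|_{[c_i,c_{i+1}]})^{-1}\circ g$.

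Because $g$ is onto it attains some value in the interior of $J_i$; any preimage of such a value lies in the interior of $g^{-1}(J_i)$, so $g^{-1}(J_i)$ has a nondegenerate component $K=[\alpha,\beta]$ with $\alpha<\beta$. The corresponding subarc $\widetilde K$ has two distinct endpoints $P_-=(\xi_i(\alpha),\alpha)$ and $P_+=(\xi_i(\beta),\beta)$, and the heart of the proof is to show that each of them is either a hat of $\Gamma$ at $c_i$, a hat of $\Gamma$ at $c_{i+1}$, or an endpoint of $\Gamma$ whose first coordinate lies in $[0,c_1)$ (if $i=0$), in $(c_i,c_{i+1})$ (if $1\le i<n$), or in $(c_n,1]$ (if $i=n$). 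Since $P_-\ne P_+$, this produces two distinct contributions to the relevant count, giving $e_0+h_1\ge2$, $h_i+e_i+h_{i+1}\ge2$, and $h_n+e_n\ge2$ respectively.

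Now fix $(x_0,y_0)\in\{P_-,P_+\}$; then $y_0$ is an endpoint of the component $K$ and $x_0=\xi_i(y_0)\in[c_i,c_{i+1}]$. If $y_0\notin\{0,1\}$ then, by maximality of $K$, $y_0$ is a boundary point of $g^{-1}(J_i)$ in $I$, which forces $g(y_0)$ to be an endpoint of the interval $J_i$ and hence $x_0\in\{c_i,c_{i+1}\}$. The step I expect to be the main obstacle is verifying that $y_0\notin C_g$: assuming $y_0\in C_g$ we have $y_0\in(0,1)$ and $g$ has a strict local extremum at $y_0$ with $g(y_0)$ an endpoint of $J_i$, and the four cases (local maximum or minimum of $g$; $g(y_0)$ the left or the right endpoint of $J_i$) all lead to a contradiction --- in two of them $g$ stays in $J_i$ on both sides of $y_0$, so $y_0$ is interior to $g^{-1}(J_i)$, and in the other two $g$ leaves $J_i$ immediately on both sides, so $\{y_0\}$ is a degenerate component of $g^{-1}(J_i)$; either way $y_0$ is not an endpoint of the nondegenerate component $K$. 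Since also $C_g\cap\{0,1\}=\emptyset$, we conclude $y_0\notin C_g$ in every case.

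Finally I would read off the type of $(x_0,y_0)$ from the position of $x_0$. If $x_0\in(c_i,c_{i+1})$ then $x_0\in(0,1)\setminus C_f$ (using $c_0=0$, $c_{n+1}=1$ at the extreme strips); by the preceding paragraph $y_0\notin\{0,1\}$ is impossible, so $y_0\in\{0,1\}$ and $(x_0,y_0)$ is an endpoint of $\Gamma$ of type (b), with first coordinate in the required region. If $x_0$ equals $c_i$ or $c_{i+1}$ and this endpoint of the strip is a critical point of $f$ (which is exactly the case when $1\le i<n$, or when $x_0=c_1$ and $i=0$, or $x_0=c_n$ and $i=n$), then $x_0\in C_f$ and $y_0\notin C_g$, so $(x_0,y_0)$ is a hat at $c_i$ or at $c_{i+1}$. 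The only remaining possibilities are $x_0=0$ (forcing $i=0$) and $x_0=1$ (forcing $i=n$); in those cases $x_0\in\{0,1\}$ and $y_0\notin C_g$, so $(x_0,y_0)$ is an endpoint of $\Gamma$ of type (a), whose first coordinate is $0\in[0,c_1)$ or $1\in(c_n,1]$. This exhausts the cases and completes the proof.
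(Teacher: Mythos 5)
Your proof is correct; it establishes the same local counting fact as the paper (each maximal monotone piece of $f$ forces at least two hats or endpoints over it) but by a different mechanism. You identify $\Gamma(g^{-1}\circ f)\cap([c_i,c_{i+1}]\times I)$ with $g^{-1}(J_i)$, extract a nondegenerate component $K$ (which exists because $g$ is onto), and classify the two endpoints of the corresponding arc; the delicate step, which you handle correctly, is that an endpoint $y_0\in(0,1)$ of a nondegenerate component cannot lie in $C_g$, since a local extremum of $g$ sitting on the boundary of $g^{-1}(J_i)$ would make $y_0$ either interior to $g^{-1}(J_i)$ or an isolated degenerate component, and in either case not an endpoint of $K$. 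The paper instead argues directly from boundary values: it splits on whether $f(0)$ and $f(c_1)$ lie in $(0,1)$ or in $\{0,1\}$ and on whether $g(0),g(1)$ lie in $f([0,c_1])$, exhibiting the two special points explicitly from these values, and then states that the remaining inequalities ``proceed in the same manner.'' Your strip argument is more uniform---all three inequalities follow from one statement about the components of $g^{-1}(J_i)$---at the cost of a longer setup; both arguments are elementary and, consistently with the hypotheses of the lemma, neither uses commutativity of $f$ and $g$.
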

\begin{proof}
	Note that if $f(0)\in (0,1)$, then there is at least one $y\in I$ such that $y\not\in C_g$ and $g(y)=f(0)$, thus $(0,y)\in\Gamma(g^{-1}\circ f)$ is an endpoint. If $f(c_1)\in (0,1)$ we again find $y\in I$ such that $y\not\in C_g$ and $g(y)=f(c_1)$, thus $(c_1,y)$ is a hat. So if $f(0),f(c_1)\in (0,1)$, then $e_0+h_1\geq 2$.
	
	Assume that $f(0)\in\{0,1\}$, and without loss of generality, that $f(0)=0$. If $g(0)\in f([0,c_1])$, then there is $x\in [0,c_1]$ such that $f(x)=g(0)$. Thus, $(x,0)\in\Gamma(g^{-1}\circ f)$. If $x\in[0,c_1)$, then $(x,0)$ is an endpoint, and if $x=c_1$, then $(x,0)$ is a hat. If $g(1)$ is also in $f([0,c_1])$, then we conclude $e_0+h_1\geq 2$. Otherwise there is $y\not\in C_g$ such that $g(y)=f(c_1)$ and $(c_1,y)$ is a hat. Similarly, if $g(0),g(1)$ are neither in $f([0,c_1])$, then there are $y_1,y_2\not\in C_g$ such that $g(y_1)=g(y_2)=f(c_1)$, thus $(c_1,y_1), (c_2,y_2)$ are hats. In any case, $e_0+h_1\geq 2$. Other cases proceed in the same manner.
\end{proof}

When we sum up the equations from the previous lemma we get
\begin{equation}\label{eq:3}
\sum_{j=0}^ne_j+2\sum_{i=1}^nh_i\geq 2(n+1).
\end{equation}
Thus, combined with (\ref{eq:1}), we get
$$2(n+1)\leq\sum_{j=0}^ne_j+2\sum_{i=1}^nh_i\leq 2(n+2)-\sum_{j=0}^ne_j,$$
which implies that $\sum_{j=0}^ne_j\leq 2$.
However, there are at least two endpoints, thus $\sum_{j=0}^ne_j=2$.  Consequently, (\ref{eq:1}) and (\ref{eq:3}) imply that $\sum_{i=1}^nh_i=n$.

Moreover, note that (\ref{eq:3}) and (\ref{eq:2}) imply that the end-hat cannot exist. Otherwise we have 
$$2(n+1)\leq\sum_{j=0}^ne_j+2\sum_{i=1}^nh_i\leq 2(n+1)-\sum_{j=0}^ne_j\leq 2(n+1)-2=2n,$$
which is a contradiction.

\begin{remark}\label{hatpos}
Now we know all the possibilities for $(h_i)_{i=1}^n$. If some $e_i=2$, then $e_j=0$ for $j\neq i$, and $h_{1},h_{2},\ldots,h_{i-1}=2,0,2,0,\ldots,0$, and $h_{i+1},h_{i+2},\ldots, h_n=0,2,0,\ldots,2$. If there are $i<j$ such that $e_i=e_j=1$, then $e_k=0$, for $k\neq i,j$, and either $h_{1},\ldots, h_i=2,0,\ldots,2,0$, $h_{i+1},\ldots,h_j=1,1,\ldots, 1$ $h_{j+1},h_{j+2},\ldots,h_n=0,2,\ldots,0,2$; or $h_k=1$ for all $1\leq k\leq n$.
\end{remark}

\begin{corollary}\label{cor:monotone_param}
	If $f\circ g^{-1}=g^{-1}\circ f$, then $\Gamma(g^{-1}\circ f)$ has exactly two endpoints $(g(0),f(0)),(g(1),f(1))$, and $n=|C_f|$ hats $(g(c_i),f(c_i))$, where $C_f=\{c_1,\ldots,c_n\}$. Moreover, $\alpha\colon I\to\Gamma(g^{-1}\circ f)$, given by $\alpha(t)=(g(t),f(t))$, $t\in I$, is locally monotone, \ie every $t\in I$ has a neighbourhood $t\in U\subset I$ such that $\alpha|_U$ is one-to-one.  
\end{corollary}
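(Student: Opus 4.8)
The plan is to read the statement off from the counting already carried out in this section, together with Lemmas~\ref{lem:param} and~\ref{lem:endpoints}, Remark~\ref{rem:hatsnstuff}, and the symmetry of the hypothesis in $f$ and $g$.

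First I would pin down the hats exactly. The discussion preceding this corollary shows that $\Gamma(g^{-1}\circ f)$ has exactly two endpoints, exactly $n=|C_f|$ hats, and no end-hat. By Lemma~\ref{lem:param} (and the absence of an end-hat), every hat is of the form $(g(t),f(t))$ with $t\in C_f$, and by Remark~\ref{rem:hatsnstuff} the assignment $(x,y)\mapsto t(x,y)$ is injective; since there are $n$ hats and $|C_f|=n$, this assignment is a bijection onto $C_f$. Hence the hats are precisely $(g(c_i),f(c_i))$ for $1\le i\le n$. Reading off the defining conditions of a hat from each of these points gives $g(C_f)\subseteq C_f$ and $f(C_f)\cap C_g=\emptyset$.

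Next I would use symmetry. Since $g^{-1}\circ f=f\circ g^{-1}$ is equivalent to $f^{-1}\circ g=g\circ f^{-1}$, the same reasoning applied to $\Gamma(f^{-1}\circ g)$ yields $f(C_g)\subseteq C_g$ and $g(C_g)\cap C_f=\emptyset$. Combining the two, if $t\in C_f\cap C_g$ then $g(t)\in C_f$ by the first inclusion and $g(t)\notin C_f$ by the last, which is absurd; so $C_f\cap C_g=\emptyset$. Feeding this into Lemma~\ref{lem:endpoints}, every endpoint of $\Gamma(g^{-1}\circ f)$ is $(g(t),f(t))$ with $t\in\{0,1\}\cup(C_f\cap C_g)=\{0,1\}$, and since there are exactly two endpoints they must be $(g(0),f(0))$ and $(g(1),f(1))$ (these being, in particular, distinct).

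Finally, local monotonicity of $\alpha(t)=(g(t),f(t))$ follows cheaply from $C_f\cap C_g=\emptyset$: recall $\Gamma(g^{-1}\circ f)=\Gamma(f\circ g^{-1})=\{(g(t),f(t)):t\in I\}$, so $\alpha$ is well defined and onto; and for any $t\in I$, either $t\notin C_f$ or $t\notin C_g$, so (using that $C_f,C_g$ are finite) there is a neighbourhood $U\ni t$ in $I$ on which the corresponding map is strictly monotone, hence one-to-one, which forces $\alpha|_U$ to be one-to-one. I expect the only real obstacle to be the first step — turning the counting identities $\sum_i h_i=n$ and $\sum_j e_j=2$ into the explicit list of hats via the injectivity of Remark~\ref{rem:hatsnstuff}; once the hats are identified, $C_f\cap C_g=\emptyset$ and everything downstream of it is immediate.
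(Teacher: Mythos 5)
Your proposal is correct. The identification of the hats as the $n$ distinct points $(g(c_i),f(c_i))$ — via the injectivity in Remark~\ref{rem:hatsnstuff} together with the counts $\sum_i h_i=n$, $\sum_j e_j=2$ and the non-existence of an end-hat — is exactly the paper's argument. Where you genuinely diverge is in the rest: you read off from the hat condition that $g(C_f)\subseteq C_f$ and $f(C_f)\cap C_g=\emptyset$, run the same analysis on $\Gamma(f^{-1}\circ g)$ (legitimate, since the hypothesis is symmetric in $f$ and $g$), and deduce $C_f\cap C_g=\emptyset$ up front; the endpoint identification and the local injectivity of $\alpha$ then each take one line. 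The paper goes the other way around: it proves local monotonicity by contradiction — a failure of injectivity near $t$ forces $t\in C_f$ and produces a second point $s\in C_f$ with $\alpha(s)=\alpha(t)$, collapsing two hats into one and contradicting the count of $n$ hats — and only afterwards obtains $C_f\cap C_g=\emptyset$ as Corollary~\ref{cor:commoncritpt}. Your route is shorter at the cost of invoking the symmetric counting for the second graph, whereas the paper's argument is self-contained within $\Gamma(g^{-1}\circ f)$. Two side benefits of your version: the endpoint claim becomes airtight (Lemma~\ref{lem:endpoints} only places endpoint parameters in $\{0,1\}\cup(C_f\cap C_g)$, so one really does need the disjointness to conclude the endpoints are exactly $(g(0),f(0))$ and $(g(1),f(1))$ — a point the paper's ``direct consequence'' glosses over), and your inclusion $f(C_g)\subseteq C_g$ anticipates Lemma~\ref{prescrit}.
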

\begin{proof}
	First part of the corollary is a direct consequence of Remark~\ref{rem:hatsnstuff} and Lemma~\ref{lem:endpoints}. Assume that $\alpha$ is not locally monotone, so there is an interval $J\subset I$ and $t\in \Int(J)$ such that $\alpha(t)$ is an extremum of $\alpha(J)$. Specifically, this implies that $t\in C_f$, so there is $i\in\{1,\ldots,n\}$ such that $t=c_i$, and hence $\alpha(t)=(g(t),f(t))=(g(c_i),f(c_i))$ is a hat. We take $J=[a_J,b_J]$ small enough such that $\Int(J)\cap C_f=\{c_j\}$, and $\alpha(a_J)=\alpha(b_J)$. 
	Since $\alpha(t)$ is a hat, it has a (closed) neighbourhood in $\Gamma(g^{-1}\circ f)$ which is an arc $A=[a,b]\subset\Gamma(g^{-1}\circ f)$, and we can take $a=\alpha(a_J)$. Such an arc exists by Lemma~\ref{lem:hat}, or simply since $\alpha(t)$ is not an endpoint of $\Gamma(g^{-1}\circ f)$. So we have $\alpha(J)=[a,\alpha(t)]\subsetneq A$. Since every point in $(\alpha(t),b]$ is of the form $(g(t'),f(t'))$, for some $t'\in I$, it follows that there is $t\neq s\in I$ such that $\alpha(t)=\alpha(s)$. Note that $s\in(0,1)$, otherwise $\alpha(t)$ would be an endpoint. Moreover, since $\alpha(t)$ is a hat, Lemma~\ref{lem:hat} implies that there is an open $V\ni s$ such that $f(s)$ is an extremum of $f(V)$, \ie $s\in C_f$. Since $s\neq t$, there is $i\neq j\in\{1,\ldots,n\}$ such that $\alpha(t)=\alpha(s)=(g(c_j),f(c_j))$. But then there are $<n$ hats in $\Gamma(g^{-1}\circ f)$, which is a contradiction.
\end{proof}

\begin{corollary}\label{cor:commoncritpt}
	If $f\circ g^{-1}=g^{-1}\circ f$, then $C_f\cap C_g=\emptyset$.
\end{corollary}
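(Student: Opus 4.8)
The plan is to argue by contradiction, using that Corollary~\ref{cor:monotone_param} has already identified \emph{all} hats of $\Gamma(g^{-1}\circ f)$, together with the symmetry of the hypothesis under interchanging $f$ and $g$ that was noted at the beginning of Section~\ref{sec:hats}.

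First I would suppose $c\in C_f\cap C_g$. Since $f\circ g^{-1}=g^{-1}\circ f$, Corollary~\ref{cor:monotone_param} tells us that the hats of $\Gamma(g^{-1}\circ f)$ are precisely the points $(g(c_i),f(c_i))$ with $c_i\in C_f$; in particular $(g(c),f(c))$ is a hat of $\Gamma(g^{-1}\circ f)$. By the definition of a hat, the first coordinate of a hat of $\Gamma(g^{-1}\circ f)$ lies in $C_f$ and its second coordinate lies outside $C_g$, so reading off the second coordinate gives $f(c)\notin C_g$. Next I would invoke that $f\circ g^{-1}=g^{-1}\circ f$ is equivalent to $g\circ f^{-1}=f^{-1}\circ g$, so that Corollary~\ref{cor:monotone_param} applies verbatim with the roles of $f$ and $g$ exchanged: the hats of $\Gamma(f^{-1}\circ g)=\{(f(t),g(t)):t\in I\}$ are precisely the points $(f(d_j),g(d_j))$ with $d_j\in C_g$. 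As $c\in C_g$, the point $(f(c),g(c))$ is then a hat of $\Gamma(f^{-1}\circ g)$, and now the defining property of a hat (with $f,g$ swapped) forces its first coordinate into $C_g$, that is, $f(c)\in C_g$. This contradicts $f(c)\notin C_g$, so no such $c$ exists and $C_f\cap C_g=\emptyset$.

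I do not anticipate a genuine obstacle: once Corollary~\ref{cor:monotone_param} is available the statement is essentially immediate. The only point requiring care is coordinate bookkeeping — being a hat of $\Gamma(g^{-1}\circ f)$ records simultaneously that the $f$-input coordinate is critical for $f$ and that the companion $g$-preimage coordinate is non-critical for $g$, and one must correctly track how these two constraints are relabelled upon passing to $\Gamma(f^{-1}\circ g)$, where the coordinates are reversed. Matching them up is exactly what produces the two contradictory assertions $f(c)\in C_g$ and $f(c)\notin C_g$.
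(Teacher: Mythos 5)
Your argument is correct and non-circular: Corollary~\ref{cor:monotone_param} is established before this statement and does not depend on it, the standing hypotheses on $f$ and $g$ are symmetric, and the equivalence $f\circ g^{-1}=g^{-1}\circ f\iff g\circ f^{-1}=f^{-1}\circ g$ (noted at the start of Section~\ref{sec:hats}) licenses applying the corollary with the roles of $f$ and $g$ exchanged. Your coordinate bookkeeping is right: $(g(c),f(c))$ being a hat of $\Gamma(g^{-1}\circ f)$ forces $f(c)\notin C_g$, while $(f(c),g(c))$ being a hat of $\Gamma(f^{-1}\circ g)$ forces $f(c)\in C_g$, a contradiction. The paper reaches the same conclusion from the \emph{other} clause of Corollary~\ref{cor:monotone_param}: if $s\in C_f\cap C_g$, then both coordinates of $\alpha(t)=(g(t),f(t))$ fold at $t=s$, so $\alpha$ fails to be locally one-to-one there, contradicting local monotonicity. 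The paper's route is shorter and purely local (one application of the corollary, no symmetry needed); yours trades that for a clean ``double counting'' of hats that also yields the extra byproduct $g(c)\notin C_f$ and makes explicit that critical values of $f$ at points of $C_f$ avoid $C_g$ — a fact closely related to Lemma~\ref{prescrit} later on. Both are valid one-paragraph deductions from the same corollary.
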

\begin{proof}
	Note that if $s\in C_f\cap C_g$, then $\alpha\colon I\to\Gamma(g^{-1}\circ f)$, given by $\alpha(t)=(g(t),f(t))$, for every $t\in I$, is not locally monotone at $s$, which is in contradiction with Corollary~\ref{cor:monotone_param}.
\end{proof}

\section{The main result}\label{sec:main}

In this section we prove the main result, \ie Theorem~\ref{thm:main}, and consequently the existence of a common fixed point for strongly commuting, piecewise monotone maps in Corollary~\ref{cor:commoncritpt}. The approach is as follows. Assuming that $f,g$ are piecewise monotone, strongly commuting maps, we define {\em primary critical values $\{v_i\}_{i=1}^k$ of $f$} (see Definition~\ref{def:primcritval}) and then count the number of endpoints and hats of $\Gamma(g^{-1}\circ f)$ to show that every $g^{-1}([v_i,v_{i+1}])$ is connected, and moreover $g^{-1}([v_i,v_{i+1}])=[v_i,v_{i+1}]$, see Lemma~\ref{monotonepatr} and Lemma~\ref{invariant}. Actually, we show that either both $f|_{f^{-1}([v_i,v_{i+1}])}$ and $g|_{[v_i,v_{i+1}]}$ are open, or at least one is monotone, see Figure~\ref{fig:primcritval}. Using the primary critical values of $g$, we obtain a similar decomposition of $f$. Then Lemma~\ref{lemma:prim6} gives the main result.

In this section we assume that $f,g\colon I\to I$ are piecewise monotone onto maps such that $f\circ g^{-1}=g^{-1}\circ f$.

\begin{lemma}\label{lem:cross}
	If there are $u< v\in I$ such that $(g(u),f(u))=(g(v),f(v))$, then $g(u)\in C_f$ and $f(u)\in C_g$.
\end{lemma}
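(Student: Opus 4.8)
The plan is to split the argument into a soft reduction, based on the classification of hats and endpoints from Section~\ref{sec:hats}, and then a direct analysis of the one remaining case using strong commutativity.

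\emph{Step 1: reduction.} Write $\alpha(t)=(g(t),f(t))$ and $p:=\alpha(u)=\alpha(v)$, $x:=g(u)$, $y:=f(u)$. I would first observe, essentially by the argument in the proof of Corollary~\ref{cor:monotone_param}, that $p$ is neither a hat nor an endpoint of $\Gamma(g^{-1}\circ f)$. Indeed, if $p$ were a hat, then by Lemma~\ref{lem:hat} and local monotonicity (Corollary~\ref{cor:monotone_param}) both $u$ and $v$ would lie in $C_f$, contradicting the fact that the $n$ hats $\alpha(c_i)$ are pairwise distinct; and if $p$ were an endpoint, then $p\in\{\alpha(0),\alpha(1)\}$ by Corollary~\ref{cor:monotone_param}, so by Lemma~\ref{lem:endpt} it sits at the end of a half-open arc of $\Gamma(g^{-1}\circ f)$, which is incompatible with $\alpha$ being locally monotone at the other, interior preimage. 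Since the hypotheses are symmetric in $f$ and $g$ (as $g^{-1}\circ f=f\circ g^{-1}$ iff $f^{-1}\circ g=g\circ f^{-1}$), the same holds for $\Gamma(f^{-1}\circ g)$ at $(f(u),g(u))$. Reading off the definition of a hat in both graphs yields ``$x\notin C_f$ or $y\in C_g$'' and ``$y\notin C_g$ or $x\in C_f$'', i.e. $x\in C_f\iff y\in C_g$; and ``$p$ is not a hat'', together with the fact that $\alpha(c_i)$ is a hat for every $c_i\in C_f$, forces $u,v\notin C_f$, hence, symmetrically, $u,v\notin C_f\cup C_g$ (and likewise for every $\alpha$-preimage of $p$). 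It remains to rule out $x=g(u)=g(v)\notin C_f$ and $y=f(u)=f(v)\notin C_g$.

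\emph{Step 2: local structure via strong commutativity.} Assuming $x\notin C_f$, $y\notin C_g$ (and $x,y\notin\{0,1\}$, from the ``not an endpoint'' step), the construction of Lemma~\ref{lem:endpt} gives neighbourhoods $x\in U$, $y\in V$ with $f|_U,g|_V$ homeomorphisms onto a common interval $J\ni f(x)=g(y)$, so that near $p$ the set $\Gamma(g^{-1}\circ f)$ is the single open arc $A=\{(z,(g|_V)^{-1}(f(z))):z\in U\}$, with $p$ interior. Because each preimage of $p$ is non-critical for $g$, near it $g$ is a homeomorphism onto a full neighbourhood of $x$ and, by commutativity, $\alpha(t)=(g(t),(g|_V)^{-1}(f(g(t))))$ there; hence $\alpha$ maps a neighbourhood of each preimage of $p$ onto a subarc of $A$ containing $p$ in its interior. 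Moreover, if $z\mapsto\tilde u(z)$, $z\mapsto\tilde v(z)$ are the $g$-preimages near the distinct preimages $u,v$ of $p$, then $g(f(\tilde u(z)))=f(z)=g(f(\tilde v(z)))$ by commutativity and, since $y\notin C_g$, $g|_V$ is injective near $y$, so $f(\tilde u(z))=f(\tilde v(z))$ for $z$ near $x$. Equivalently $\alpha(\tilde u(z))=\alpha(\tilde v(z))$: the point $p$ is interior to a whole arc $D\subseteq A$ of double points of $\alpha$, covered at least twice by $\alpha$.

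\emph{Step 3: follow the double arc and conclude.} Viewing the relation $f\circ\tilde u = f\circ\tilde v$ as an identity of monotone functions, it propagates along $\Gamma(g^{-1}\circ f)$ until one meets a critical value; and by the equivalence of Step~1 applied to each double point of $D$, the two obstructions ($z$ reaching $C_f$ and $(g|_V)^{-1}(f(z))$ reaching $C_g$) occur simultaneously. Keeping the first coordinate monotone along $D$ and using compactness of $I$, the maximal such arc $D$ terminates at a point $q\in\Gamma(g^{-1}\circ f)$, and $q$ cannot be an endpoint of $\Gamma(g^{-1}\circ f)$, a hat, or (the transpose of) a hat of $\Gamma(f^{-1}\circ g)$: each of these has a unique $\alpha$-preimage, whereas $q$ is a limit of points each having at least two preimages, and $\alpha$ is locally injective. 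Hence $q$ is a branching point of $\Gamma(g^{-1}\circ f)$, which, since $\Gamma(g^{-1}\circ f)=\{(a,b):f(a)=g(b)\}$ with $f,g$ piecewise monotone, forces both coordinates of $q$ to be critical. The remaining point---and the step I expect to be the main obstacle---is to convert the existence of this multiply-covered arc $D$ with such branching endpoints into an outright contradiction: I would attempt this by a parity/counting argument on how many times $\alpha$ traverses each edge of $\Gamma(g^{-1}\circ f)$ incident to the endpoints of $D$, played against the fact that $\Gamma(g^{-1}\circ f)$ has exactly two endpoints and exactly $n$ hats, or, failing that, by choosing $p$ so that $D$ is shortest possible and exploiting that minimality.
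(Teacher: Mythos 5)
Your Steps 1 and 2 follow the paper's route: you correctly reduce the lemma to excluding the case $g(u)\notin C_f$ and $f(u)\notin C_g$, and you correctly build the local ``doubling'' relation (your arc $D$ of double points is exactly the paper's maximal monotone correspondence $\beta\colon U=[u_1,u_2]\to V=[v_1,v_2]$ with $f\circ\beta=f$, $g\circ\beta=g$ and $\beta(u)=v$). The problem is Step 3, where you openly leave the contradiction unproved. Your proposed closing moves (a parity/edge-traversal count, or a minimality argument on the length of $D$) are not carried out, and the intermediate claim you do make --- that the terminus $q$ of $D$ is a ``branching point'' whose \emph{coordinates} are both critical --- points in the wrong direction: the obstruction to extending the doubling relation lives in the \emph{parameters} $u_2,v_1$, not in the coordinates of $q$, and nothing you have written turns ``both coordinates of $q$ are critical'' into a contradiction (indeed such crossing points genuinely occur; the lemma asserts precisely that every double point is of that form).

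The missing step, which is the actual content of the paper's proof, is this. By maximality of $U,V$, at least one of $u_2,v_1$ lies in $C_f\cup C_g$; both cannot lie in $C_f$ (nor both in $C_g$), since then $\alpha(u_2)=\alpha(v_1)$ would be a single hat absorbing two critical points and the graph would have fewer than $n=|C_f|$ hats, contradicting Corollary~\ref{cor:monotone_param}. So exactly one of them, say $v_1$, lies in $C_f$ (or in $C_g$, handled symmetrically in $\Gamma(f^{-1}\circ g)$). By Corollary~\ref{cor:monotone_param} the point $(g(v_1),f(v_1))=(g(u_2),f(u_2))$ is then a hat, and Lemma~\ref{lem:hat} (via the argument of Lemma~\ref{lem:param}) forces the other parameter $u_2$ to lie in $C_f$ as well --- the desired contradiction. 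You in fact have the key ingredient in hand (your observation that a hat admits a unique $\alpha$-preimage), but you deploy it only to conclude that $q$ is \emph{not} a hat, rather than pairing it with the maximality statement ``one of $u_2,v_1$ is critical, hence $q$ \emph{is} a hat'' to finish. As written, the proof is incomplete.
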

\begin{proof}
	Denote $(x,y)=(g(u),f(u))=(g(v),f(v))$. First note that $(x,y)$ is not an endpoint of $\Gamma(g^{-1}\circ f)$, otherwise, as in the proof of Corollary~\ref{cor:monotone_param}, we get that one of $u,v$ is in $C_f$, and consequently, that number of hats is strictly less than $n$.
	
	Assume that $x\not\in C_f$. Then, as in the proof of Lemma \ref{lem:hat}, there is a neighbourhood of $(x,y)$ in $\Gamma(g^{-1}\circ f)$ which is an arc. Recall that $\alpha\colon I\to\Gamma(g^{-1}\circ f)$, given by $\alpha(t)=(g(t),f(t))$, $t\in I$, is locally monotone, thus there are $U=[u_1,u_2]\ni u, V=[v_1,v_2]\ni v$, and a monotone map $\beta\colon U\to V$ such that $\beta(u)=v$, and $g(t)=g(\beta(t)), f(t)=f(\beta(t))$, for all $t\in U$. Take the maximal such $U,V$. Then $U\cap V=\emptyset$, otherwise $\alpha$ is not locally monotone. Assume without loss of generality that $\beta$ is decreasing. Since $U,V$ are maximal, exactly one of $u_2,v_1$ is in $C_g$, or exactly one of $u_2,v_1$ is in $C_f$. Assume that we are in the second case, and $u_2\not\in C_f$, $v_1\in C_f$. Then $(g(v_1),f(v_1))=(g(u_2),f(u_2))$ is a hat of $\Gamma(g^{-1}\circ f)$. However, Lemma~\ref{lem:hat} implies that $u_2\in C_f$, which is a contradiction. We similarly get the contradiction in other cases, thus $x\in C_f$. We similarly prove that $y\in C_g$ also.
\end{proof}

Recall that $C_f=\{0<c_1<\ldots<c_n<1\}$, and we denote $c_0=0, c_{n+1}=1$.

\begin{lemma}\label{lem:connected}
	Sets $g^{-1}(f([c_i,c_{i+1}]))$ are connected for every $0\leq i\leq n+1$.
\end{lemma}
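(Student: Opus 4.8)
Proof plan for Lemma 5.9 ($g^{-1}(f([c_i,c_{i+1}]))$ is connected for every $0\le i\le n+1$).

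The plan is to argue by contradiction: suppose that for some $i$ the set $g^{-1}(f([c_i,c_{i+1}]))$ is disconnected. First I would understand the shape of $J_i:=f([c_i,c_{i+1}])$. Since $f|_{[c_i,c_{i+1}]}$ is monotone (there are no critical points of $f$ in the interior), $J_i=[a,b]$ is a closed subinterval of $I$ with $a=f(c_i)$, $b=f(c_{i+1})$ (or vice versa). Now $g^{-1}(J_i)=g^{-1}([a,b])$ is a closed set, and because $g$ is piecewise monotone and onto, it is a finite union of closed intervals $K_1<K_2<\ldots<K_r$. If $r\ge 2$, I want to produce too many hats or endpoints of $\Gamma(g^{-1}\circ f)$, contradicting the counting established before Remark~\ref{hatpos} — namely that $\Gamma(g^{-1}\circ f)$ has exactly $n$ hats (one for each $c_j\in C_f$) and exactly two endpoints.

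The key mechanism is this. Consider how the arc $\alpha([c_i,c_{i+1}])\subset\Gamma(g^{-1}\circ f)$ sits over the strip $[c_i,c_{i+1}]\times I$. Its first coordinate $g$ sweeps out $J_i$, and its second coordinate $f$ sweeps monotonically between $f(c_i)$ and $f(c_{i+1})$. The endpoints $\alpha(c_i),\alpha(c_{i+1})$ of this subarc are hats or (if $i\in\{0,n\}$) endpoints of $\Gamma(g^{-1}\circ f)$. By strong commutativity and Corollary~\ref{cor:monotone_param}, $\alpha$ is locally monotone and the total hat/endpoint count is tight. If $g^{-1}(J_i)$ had a gap, then as $t$ runs over $[c_i,c_{i+1}]$, the value $g(t)$ — which must stay inside $J_i$ — cannot pass continuously from one component $K_s$ to another; but $g(t)$ takes values in $g([c_i,c_{i+1}])$, a single interval, so $g([c_i,c_{i+1}])\subseteq J_i$ forces $g([c_i,c_{i+1}])$ to lie entirely in one component $K_s$. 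I'd then look at the other components $K_{s'}$, $s'\ne s$: over each such $K_{s'}$ we have $g(K_{s'})\subseteq[a,b]=J_i$ with $g$ taking the value $a$ or $b$ at the endpoints of $K_{s'}$ (by definition of $g^{-1}([a,b])$ as a union of maximal intervals, endpoints of $K_{s'}$ map to $\{a,b\}$ unless they are $0$ or $1$). Now $a=f(c_i)$ or $b=f(c_{i+1})$ is a critical value of $f$, and $g^{-1}(a)$, $g^{-1}(b)$ contain the endpoints of all the $K_{s'}$; tracking these through $\Gamma(f\circ g^{-1})=\{(g(t),f(t))\}$ and using Lemma~\ref{lem:cross} (points where $\alpha$ is not injective force $g(t)\in C_f$ and $f(t)\in C_g$, the latter impossible since $C_f\cap C_g=\emptyset$ by Corollary~\ref{cor:commoncritpt}), I get that each extra component $K_{s'}$ contributes an additional hat or endpoint of $\Gamma(g^{-1}\circ f)$ beyond the ones already accounted for. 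This pushes $\sum h_j+\sum e_j$ strictly above $n+2$, contradicting inequality~(\ref{eq:1}).

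The main obstacle I anticipate is the bookkeeping in the previous paragraph: making precise exactly which point of $\Gamma(g^{-1}\circ f)$ each extra component of $g^{-1}(J_i)$ produces, and checking it has not already been counted (i.e. that distinct components give distinct, genuinely new hats/endpoints). The clean way around this is probably to localize: rather than counting globally, note that a gap in $g^{-1}(f([c_i,c_{i+1}]))$ forces, via the intermediate value theorem applied to $g$, the existence of a point $z\in(a,b)$ with $g^{-1}(z)\subset C_g\cup\{0,1\}$ — but then $z\in\{0,1\}$ is impossible since $z\in(a,b)\subseteq(0,1)$ would be needed, or alternatively $g^{-1}(z)$ meeting the interior of the gap forces $g$ to have a local extremum there, giving a critical point $d\in C_g$ with $g(d)=z\in(a,b)=(f(c_i),f(c_{i+1}))$, hence $(g(d),f(d))$ with $f(d)\in J_i$... and I would chase this to an endpoint of $\Gamma(g^{-1}\circ f)$ of type (b) sitting over the open interval $(c_i,c_{i+1})$, i.e. contributing to $e_i$, which combined with the already-forced hats at $c_i$ and $c_{i+1}$ and Remark~\ref{hatpos} (which pins down exactly where the nonzero $e_j$ can sit) yields the contradiction. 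I expect the argument to ultimately hinge on Corollary~\ref{cor:commoncritpt} and the exact-count consequences of~(\ref{eq:1})--(\ref{eq:3}).
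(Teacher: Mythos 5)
Your overall strategy is the paper's: assume $g^{-1}(f([c_i,c_{i+1}]))=g^{-1}([a,b])$ has at least two components and contradict the tight hat/endpoint count. The core of your middle paragraph --- the endpoints of each component lie in $g^{-1}(f(c_i))\cup g^{-1}(f(c_{i+1}))$ unless they equal $0$ or $1$, and each such point yields a hat over $c_i$ or $c_{i+1}$ (or an endpoint of the graph over $[c_i,c_{i+1}]$), so that two components force $h_i+e_i+h_{i+1}\geq 4$ against the equality $h_i+e_i+h_{i+1}=2$ forced by Lemma~\ref{lem:bunchofinequalities} and (\ref{eq:1}) --- is exactly the paper's proof. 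Two points in your write-up need repair, though. First, for a component endpoint $y$ with $g(y)=f(c_i)$ to give a \emph{hat} $(c_i,y)$ you must verify $y\notin C_g$; this does not follow from Corollary~\ref{cor:commoncritpt} (which excludes $y\in C_f\cap C_g$, not $y\in C_g$), but from the observation that if $y$ were a local extremum of $g$ with value $f(c_i)$ or $f(c_{i+1})$, then $g$ would remain in $[a,b]$ on both sides of $y$, contradicting the fact that $y$ bounds a maximal component of $g^{-1}([a,b])$. You also need to dispose of degenerate components, which the paper does by exhibiting an isolated point of the connected set $\Gamma(g^{-1}\circ f)=\Gamma(f\circ g^{-1})$ (Remark~\ref{rem:graphconnected}).

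Second, the ``clean localization'' in your last paragraph is based on false claims and would fail: a gap between two components does not produce $z\in(a,b)$ with $g^{-1}(z)\subset C_g\cup\{0,1\}$, nor a critical point $d\in C_g$ in the gap with $g(d)\in(a,b)$. On the gap $g$ takes values \emph{outside} $[a,b]$, so the local extremum of $g$ that the gap forces has value outside $(a,b)$ and contributes nothing to $h_i+e_i+h_{i+1}$. Likewise the assertion $g([c_i,c_{i+1}])\subseteq f([c_i,c_{i+1}])$ is unjustified and reflects a coordinate mix-up in $\alpha(t)=(g(t),f(t))$: the part of the graph with first coordinate in $[c_i,c_{i+1}]$ is parametrized by $\{t: g(t)\in[c_i,c_{i+1}]\}$, not by $[c_i,c_{i+1}]$, and its second-coordinate projection is $g^{-1}(f([c_i,c_{i+1}]))$. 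Discard that paragraph and complete the component-endpoint count of your middle paragraph instead.
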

\begin{proof}
	Assume that $g^{-1}(f([c_i,c_{i+1}]))$ has two components, $A,B\subset I$, and assume that $a<b$ for every $a\in A$ and $b\in B$. If one of $A, B$ is degenerate, then there is an isolated point in $\Gamma(g^{-1}\circ f)$, which is a contradiction with $\Gamma(g^{-1}\circ f)=\Gamma(f\circ g^{-1})$ being connected, see Remark~\ref{rem:graphconnected}. Let $A=[a_1,a_2], B=[b_1,b_2]$. Note that $g(a_2),g(b_1)\in \{f(c_i),f(c_{i+1})\}$; $a_1=0$ or $g(a_1)\in \{f(c_i),f(c_{i+1})\}$; and  $b_2=1$ or $g(b_2)\in \{f(c_i),f(c_{i+1})\}$.  Thus $a_1,a_2,b_1,b_2\not\in C_g$. This implies that $h_i+e_i+h_{i+1}\geq 4$, which is a contradiction with Remark~\ref{hatpos}. See Figure~\ref{fig:conn}.
\end{proof}

\begin{figure}
	\centering
	\begin{tikzpicture}
	\draw (-0.1,0.2)--(0,0)--(1,2)--(1.1,1.8);
	\node[below] at (0,0) {\small $c_i$};
	\node[below] at (1,0) {\small $c_{i+1}$};
	\node[above] at (0.7,2) {$f$};
	\node[left] at (-0.2,0) {\small $f(c_i)$};
	\node[left] at (-0.2,2) {\small $f(c_{i+1})$};
	
	\draw[dashed] (-0.2,0)--(6,0);
	\draw[dashed] (-0.2,2)--(6,2);
	
	\draw (2,1)--(2.5,0)--(3.5,2)--(3.6,2.2);
	\draw (3.95,2.15)--(4,2)--(4.5,0.5)--(4.9,1.7)--(5.5,-0.1);
	\node[below] at (3.9,0) {\small $g|_{g^{-1}(f([c_i,c_{i+1}]))}$};
	
	\draw[->] (5.9,1)--(7.5,1);
	\end{tikzpicture}
	\begin{tikzpicture}
	
	\draw[dashed] (0,1.5)--(0,6);
	\draw[dashed] (2,1.5)--(2,6);
	
	\draw (1,2)--(0,2.5)--(2,3.5);
	\draw (2.15,4.05)--(2,4)--(0.5,4.5)--(1.7,4.9)--(0,5.4)--(-0.3,5.3);
	\draw (0,2.5)--(-0.2,2.6);
	\draw (0,2.5)--(-0.2,2.4);
	\draw (2,3.5)--(2.2,3.4);
	\node[below] at (0,1.5) {\small $c_i$};
	\node[below] at (2,1.5) {\small $c_{i+1}$};
	\node[above] at (1,6) {\small $g^{-1}\circ f|_{[c_i,c_{i+1}]}$};
	\end{tikzpicture}
	\caption{An example when $g^{-1}(f([c_i,c_{i+1}])$ is not connected. Here $h_i=1$, $e_i=1$, $h_{i+1}=2$, so $h_i+e_i+h_{i+1}=4$.}
	\label{fig:conn}
\end{figure}
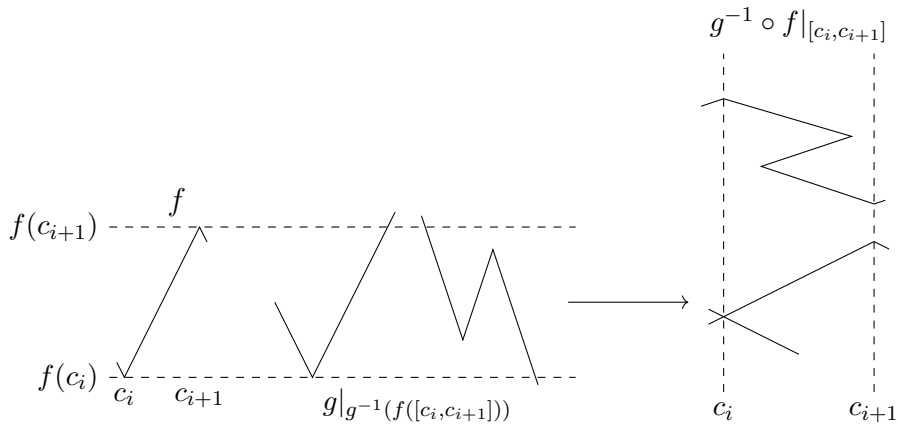

\begin{lemma}\label{lem:g11}
	Assume that $c_i\in C_f$, $0\leq \alpha<c_i$, and $f((\alpha,c_i))=(f(c_i),f(\alpha))$. Furthermore, assume that $$S=\{x\in I: g(x)\in (f(c_i),f(\alpha)), \text{ $x$ local minimum of $g$ or $x\in\{0,1\}$}\}$$ is non-empty. Then $f(x)<\min f(S)$ for all $x\geq c_i$.
\end{lemma}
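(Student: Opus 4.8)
The statement to prove is Lemma~\ref{lem:g11}: under the hypotheses that $c_i \in C_f$, that $f$ restricted to $(\alpha, c_i)$ sweeps the interval $(f(c_i), f(\alpha))$ monotonically, and that the set $S$ of ``low points'' of $g$ mapping into $(f(c_i), f(\alpha))$ is nonempty, we must show $f(x) < \min f(S)$ for every $x \geq c_i$. The plan is to argue by contradiction: suppose some $x_0 \geq c_i$ has $f(x_0) \geq \min f(S)$, and pick a witness $s_0 \in S$ realizing $\min f(S)$, so $g(s_0) \in (f(c_i), f(\alpha))$ and $s_0$ is either a local minimum of $g$ or an endpoint of $I$. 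The point $(g(s_0), f(s_0))$ — wait, more to the point, I would track the point $(g(s_0), f(x))$-type relations through the graph $\Gamma(g^{-1}\circ f) = \Gamma(f\circ g^{-1})$, exploiting that $s_0$ being a local minimum of $g$ with $g(s_0)$ interior means $(g(s_0), y) \in \Gamma(g^{-1}\circ f)$ for the relevant $y$, and that local minima of $g$ produce endpoints or hats of the graph via Lemma~\ref{lem:endpt} and Lemma~\ref{lem:hat}.

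**Main steps.** First I would use the hypothesis on $f|_{(\alpha,c_i)}$: since $f$ maps $(\alpha, c_i)$ monotonically onto $(f(c_i), f(\alpha))$ and $g(s_0) \in (f(c_i), f(\alpha))$, there is a unique $t_0 \in (\alpha, c_i)$ with $f(t_0) = g(s_0)$, hence $(g(t_0)$-coordinate aside) $(g(s_0), f(t_0)) = (g(s_0), g(s_0))$ is not quite what I want — rather, the pair $(x, y)$ with $g(x) = f(t_0)$... Let me restate: the point lives in $\Gamma(g^{-1}\circ f)$ as $(t_0, s_0)$ since $g(s_0) = f(t_0)$. Now because $s_0$ is a local minimum of $g$ (or an endpoint), Lemma~\ref{lem:endpt}/Lemma~\ref{lem:hat} forces $(t_0, s_0)$ to sit at an extremum of the second coordinate on a local arc of the graph — in fact $t_0 \notin C_f$ (it's interior to $(\alpha,c_i)$ where $f$ is monotone), so $(t_0, s_0)$ is either an endpoint of type (b)-like behavior or, since $s_0$ need not be $0$ or $1$, we use that $s_0$ being a local min of $g$ makes $s_0$ a critical point of $g$ and the arc through $(t_0,s_0)$ turns around in the $y$-direction. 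Then I would invoke Corollary~\ref{cor:monotone_param}: the parametrization $\alpha(t) = (g(t), f(t))$ is locally monotone and the graph has exactly $n$ hats (one per critical point of $f$) and two endpoints. Following the arc of $\Gamma(g^{-1}\circ f)$ emanating from $(t_0, s_0)$ in the direction of increasing first coordinate, it must proceed monotonically until it meets a hat or endpoint; the only available hat with first coordinate $\geq c_i$ that it can reach is the hat at $c_i$, namely $(c_i, g^{-1}(f(c_i)) \cap \ldots)$. Tracking the $y$-coordinate (which is the ``$g$-preimage'' coordinate) along this arc, monotonicity plus the fact that $s_0$ realizes the minimum of $f(S)$ and that $s_0$ was a turning point gives that the $f$-values attained for first-coordinate $\geq c_i$ stay strictly below $g(s_0)$'s partner — yielding $f(x) < \min f(S)$.

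**The main obstacle.** The delicate part is bookkeeping the correspondence between the two coordinates of $\Gamma(g^{-1}\circ f)$ and the actual values $f(x)$ for $x \geq c_i$ — i.e., converting a statement about the geometry of the graph (where the turning points and monotone arcs are) back into the pointwise inequality $f(x) < \min f(S)$. In particular I expect the crux to be showing that the arc of the graph through $(t_0,s_0)$, continued past the critical point $c_i$, cannot ``come back up'' to reach the level set $\{y : g(y) = f(x), f(x) \geq \min f(S)\}$ without creating an extra hat or endpoint, contradicting the exact count $\sum h_i = n$, $\sum e_j = 2$ established after Lemma~\ref{lem:bunchofinequalities} and the classification in Remark~\ref{hatpos}. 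Once the graph-combinatorial contradiction is pinned down, translating it to the displayed inequality should be routine. I would also need to handle separately the sub-case $s_0 \in \{0,1\}$ (endpoint rather than hat of $g$), but Lemma~\ref{lem:endpt} gives the analogous turning behavior there, so the same argument applies with ``endpoint of $\Gamma(g^{-1}\circ f)$'' in place of ``hat.''
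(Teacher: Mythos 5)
Your proposal does not close the argument: the step you yourself flag as ``the crux'' --- showing that the arc of $\Gamma(g^{-1}\circ f)$ continued past $c_i$ cannot ``come back up'' without creating an extra hat or endpoint --- is essentially the entire content of the lemma, and you leave it open. The paper's proof does not follow arcs at all; it is a single application of Lemma~\ref{lem:connected}. Write $\mu=g(d_k)=\min g(S)$ for a minimizer $d_k\in S$ (the statement's $\min f(S)$ is read as $\min g(S)$ throughout the proof and in the applications inside Lemma~\ref{lem:open}). Since $\mu\in(f(c_i),f(\alpha))=f((\alpha,c_i))$, there is $y\in(\alpha,c_i)$ with $f(y)=\mu$. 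If some $z\geq c_i$ had $f(z)\geq\mu$, then because $f(c_i)<\mu$ the Intermediate Value Theorem gives $z'>c_i$ with $f(z')=\mu$. Now $d_k$ is a local minimum of $g$ (or $0$), so $g([d_k,d_{k+1}])\subseteq[\mu,1]$ for the next critical point $d_{k+1}$ of $g$; hence $f^{-1}(g([d_k,d_{k+1}]))$ contains $y$ and $z'$ but not $c_i$, so it is disconnected, contradicting the $f\leftrightarrow g$ symmetric form of Lemma~\ref{lem:connected}. (The case $d_k=1$ uses $[d_m,1]$ instead.) That is the whole proof.

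Beyond the missing step, there are concrete errors in your setup. You assume $f$ is monotone on $(\alpha,c_i)$, so that $t_0$ with $f(t_0)=g(s_0)$ is unique and $t_0\notin C_f$; but the hypothesis only constrains the image $f((\alpha,c_i))$, and in the second application inside Lemma~\ref{lem:open} (where $\alpha=c_{i-1}$ and the role of $c_i$ is played by $c_\iota$) the interval $(\alpha,c_\iota)$ contains further critical points of $f$, so neither monotonicity nor uniqueness holds. Your instinct that the hat/endpoint count underlies the result is correct in spirit --- it is precisely what proves Lemma~\ref{lem:connected} --- but the efficient move is to invoke that lemma as a black box on the interval $[d_k,d_{k+1}]$ determined by the minimizing element of $S$, rather than to redo the counting by chasing arcs of the graph.
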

\begin{proof}
	Denote by $\mu=g(x)=\min f(S)$ and let $y\in (\alpha,c_i)$ be such that $f(y)=\mu$.
	
	Assume first that $x=d_k$ is a local minimum of $g$ for some $k\in\{0,\ldots,m\}$ (so we include $x=0$, but not $x=1$). Note that if there is $z\geq c_i$ such that $g(z)=\mu$, then $y,z\in f^{-1}(g[d_k,d_{k+1}])$. However, $c_i\in (y,z)$ and $f(c_i)<\mu=g(d_k)<g(d_{k+1})$, and thus $f^{-1}(g([d_k,d_{k+1}]))$ is not connected, which is a contradiction with Lemma~\ref{lem:connected}.
	
	On the other hand assume $x=1$. Again if there is $z\geq c_i$ such that $g(z)=\mu$, then $f^{-1}(g([d_m,1]))$ contains $y,z$ but not $c_i\in (y,z)$. Thus it is not connected, and that is a contradiction.
	
	See Figure~\ref{fig:minS}.
\end{proof}

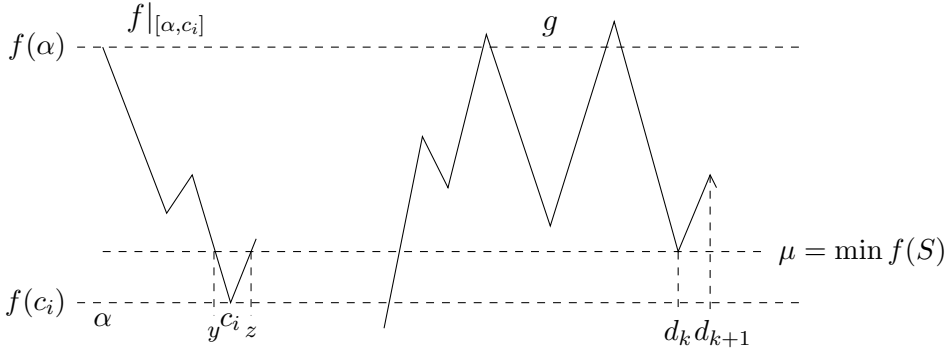
\begin{figure}
	\centering
	\begin{tikzpicture}[scale=1.7]
	\draw (0,2)--(0.5,0.7)--(0.7,1)--(1,0)--(1.2,0.5);
	\draw[dashed] (-0.2,2)--(5.5,2);
	\draw[dashed] (-0.2,0)--(5.5,0);
	\node[below] at (0,0) {\small $\alpha$};
	\node[below] at (1,0) {\small $c_i$};
	\node[left] at (-0.2,0) {\small $f(c_i)$};
	\node[left] at (-0.2,2) {\small $f(\alpha)$};
	\node[above] at (0.5,2) {$f|_{[\alpha,c_i]}$};
	\draw[dashed, very thin] (0.87,0.4)--(0.87,-0.1);
	\node[below] at (0.87,-0.1) {\scriptsize $y$}; 
	\draw[dashed, very thin] (1.16,0.4)--(1.16,-0.1);
	\node[below] at (1.16,-0.1) {\scriptsize $z$}; 
	
	\draw[dashed] (0,0.4)--(5.2,0.4);
	\node[right] at (5.2,0.4) {\small $\mu=\min f(S)$};
	\draw (2.2,-0.2)--(2.5,1.3)--(2.7,0.9)--(3,2.1)--(3.5,0.6)--(4,2.2)--(4.5,0.4)--(4.75,1)--(4.8,0.9);
	\node[above] at (3.5,2) {$g$};
	\node[below] at (4.5,-0.05) {\small $d_k$};
	\draw[dashed, thin] (4.5,0.4)--(4.5,-0.05);
	\node[below] at (4.85,-0.05) {\small $d_{k+1}$};
	\draw[dashed, thin] (4.75,1)--(4.75,-0.05);
	\end{tikzpicture}
	\caption{If there is $z>c_i$ such that $f(z)=\mu$, then $f^{-1}(g([d_k,d_{k+1}]))$ contains $y$ and $z$, but not $c_i$, so it is not connected. See the proof of Lemma~\ref{lem:g11}.}
	\label{fig:minS}
\end{figure}

\begin{proposition}\label{twofold}
	Let $f\colon I\to I$ be a full two-fold, \ie $f(0)=f(1)=0$, and there is $c\in (0,1)$ such that $f(c)=1$, and $f|_{[0,c]}, f|_{[c,1]}$ are monotone. If $g$ is a piecewise monotone, onto map such that $f\circ g^{-1}=g^{-1}\circ f$, then $g$ is an open map.
\end{proposition}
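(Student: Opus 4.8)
The plan is to reduce everything to one fibrewise identity and then argue combinatorially about the laps of $g$. Write $C_f=\{c\}$; then $f|_{[0,c]}\colon[0,c]\to[0,1]$ is an increasing homeomorphism and $f|_{[c,1]}\colon[c,1]\to[0,1]$ a decreasing one. Set $a=(f|_{[0,c]})^{-1}$ and $b=(f|_{[c,1]})^{-1}$, so $a\colon[0,1]\to[0,c]$ is an increasing homeomorphism, $b\colon[0,1]\to[c,1]$ a decreasing one, and $f^{-1}(w)=\{a(w),b(w)\}$ for every $w\in I$ — a set equal to the singleton $\{c\}$ exactly when $w=1$. By the observation opening Section~\ref{sec:hats}, $f\circ g^{-1}=g^{-1}\circ f$ is equivalent to $g\circ f^{-1}=f^{-1}\circ g$, that is,
\begin{equation}\tag{$\star$}
g\bigl(f^{-1}(w)\bigr)=f^{-1}\bigl(g(w)\bigr)\qquad\text{for all }w\in I.
\end{equation}
Reading $(\star)$ at $w=1$: $f^{-1}(g(1))=g(\{c\})$ is a one-point set, so $g(1)=1$; at $w=0$: $f^{-1}(g(0))=\{g(0),g(1)\}=\{g(0),1\}$, and since $1\in f^{-1}(y)$ only for $y=0$, we get $g(0)=0$.

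\textbf{Step 1: $g$ has no critical point that is a local maximum with value in $(0,1)$.} If $d=d_j\in C_g$ were such, with $g(d)=v\in(0,1)$, then on the lap $[d_{j-1},d_j]$ the map $g$ is strictly increasing, so $g([d_{j-1},d_j])=[g(d_{j-1}),v]$ with $g(d_{j-1})<v<1$. By Lemma~\ref{lem:connected} (with the roles of $f$ and $g$ exchanged), $f^{-1}\bigl([g(d_{j-1}),v]\bigr)$ is connected. But for $0\le s<t<1$ one has $f^{-1}([s,t])=[a(s),a(t)]\cup[b(t),b(s)]$ with $a(t)<c<b(t)$, so this set is disconnected — a contradiction.

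\textbf{Step 2: $g$ has no critical point that is a local minimum with value in $(0,1)$.} Suppose $d=d_j\in C_g$ is a local minimum with $g(d)=v\in(0,1)$, and let $d'=d_{j-1}$, $d''=d_{j+1}$ be the neighbours of $d$ in $C_g\cup\{0,1\}$. Since $g$ is strictly monotone on $[d',d]$ and on $[d,d'']$ and attains its minimum at $d$, we have $g(w)<g(d')\le1$ for $w\in(d',d]$ and $g(w)<g(d'')\le1$ for $w\in[d,d'')$, hence $g<1$ on $(d',d'')$. Fix $w\in(d',d'')$; as $g(w)<1$, $f^{-1}(g(w))=\{a(g(w)),b(g(w))\}$ with $a(g(w))<c<b(g(w))$, and $(\star)$ gives $\{g(a(w)),g(b(w))\}=\{a(g(w)),b(g(w))\}$, so neither $g(a(w))$ nor $g(b(w))$ equals $c$. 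Hence each of the continuous functions $w\mapsto g(a(w))$, $w\mapsto g(b(w))$ stays on one side of $c$ throughout $(d',d'')$, and since for every such $w$ one value is below $c$ and the other above, one of them — say $g\circ b$ — satisfies $g(b(w))>c$ on all of $(d',d'')$; then $g(b(w))$ must be the element $b(g(w))$, i.e.\ $g\circ b=b\circ g$ on $(d',d'')$. Now $g$ has a strict local minimum at $d$ and $b$ is a decreasing homeomorphism, so $b\circ g$, hence $g\circ b$, has a strict local maximum at $d$; as $b$ carries a neighbourhood of $d$ homeomorphically onto a neighbourhood of $b(d)$, $g$ has a strict local maximum at $b(d)\in(c,1)$, with $g(b(d))=(b\circ g)(d)=b(v)\in(0,1)$ — contradicting Step 1. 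If instead $g(a(w))>c$ on all of $(d',d'')$, the same argument with $g\circ a=b\circ g$ on $(d',d'')$ gives a strict local maximum of $g$ at $a(d)\in(0,c)$ with value $b(v)\in(0,1)$, again contradicting Step 1.

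By Steps 1 and 2 every critical point of $g$ has value in $\{0,1\}$, and together with $g(0)=0$, $g(1)=1$ this makes $g$ open: the $g$-image of a small interval about a critical point of value $1$ (resp.\ $0$) is of the form $(r,1]$ (resp.\ $[0,r)$), and the $g$-image of a small interval about any other point is open in $I$, so $g$ carries open sets to open sets. The real obstacle I foresee is the asymmetry between the two steps: Lemma~\ref{lem:connected} rules out ``bad'' local maxima at once because $f$ attains its maximum value $1$, but says nothing about local minima, so a bad local minimum has to be routed through the $f$-branches $a,b$ via $(\star)$ and converted into a bad local maximum; the delicate points are deciding which of $g\circ a$, $g\circ b$ stays below $c$ and verifying that the extremum of $g$ produced at $a(d)$ or $b(d)$ really is a critical point taking a value strictly between $0$ and $1$.
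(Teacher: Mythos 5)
Your proof is correct, and it reaches the conclusion by a genuinely different route from the paper's. The two arguments agree on the first half: an interior local maximum of $g$ with value in $(0,1)$ is impossible because the $f$-preimage of a subinterval of $[0,1)$ splits into one piece on each side of $c$, contradicting the connectedness given by Lemma~\ref{lem:connected} with the roles of $f$ and $g$ exchanged. The divergence is in excluding interior local minima with value in $(0,1)$. The paper first establishes $g(0)\neq g(1)\in\{0,1\}$ by counting the endpoints of $\Gamma(g^{-1}\circ f)$, deduces that $g$ has equally many local maxima and minima, say $N$, and then derives a contradiction by comparing the $N$ points of $\Gamma(f\circ g^{-1})$ with first coordinate $1$ against $|g^{-1}(f(1))|=|g^{-1}(0)|<N$ --- i.e.\ it leans on the hat/endpoint machinery of Section~\ref{sec:hats}. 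You instead exploit the explicit branch decomposition $f^{-1}=\{a,b\}$ and the fibrewise identity $g\circ f^{-1}=f^{-1}\circ g$: near a putative bad minimum $d$ the continuous functions $g\circ a$ and $g\circ b$ omit the value $c$, so exactly one of them must coincide with the branch $b\circ g$, and transporting the minimum through that branch produces a local maximum of $g$ at $a(d)$ or $b(d)$ with value $b(v)\in(0,1)$, which your Step 1 already forbids. Your route is more elementary (nothing from Section~\ref{sec:hats} beyond Lemma~\ref{lem:connected} is needed) and, as a bonus, the fibres over $w=1$ and $w=0$ immediately force the sharper normalization $g(0)=0$, $g(1)=1$, rather than merely $g(0)\neq g(1)$; what it gives up is that the paper's counting argument is the template that scales to the many-fold setting of Lemma~\ref{lem:open}, where $f^{-1}$ no longer has just two global branches.
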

\begin{proof}
	We will first prove that $g(0),g(1)\in\{0,1\}$. Assume that $g(0),g(1)\in (0,1)$. Since $f$ is a full two-fold, there exist distinct  $x_1,x_2\in (0,1)\setminus\{c\}$ and distinct $y_1,y_2\in (0,1)\setminus\{c\}$ be such that $f(x_1)=f(x_2)=g(0)$, and $f(y_1)=f(y_2)=g(1)$. Then $(x_1,0),(x_2,0),(y_1,1),(y_2,1)\in\Gamma(g^{-1}\circ f)$ are different endpoints of $g^{-1}\circ f$. Since $g^{-1}\circ f$ has exactly $2$ endpoints, we get a contradiction.
	
	Thus at least one of $g(0),g(1)$ is in $\{0,1\}$. Assume that $g(0)\in\{0,1\}$ and $g(1)\in (0,1)$. Let $x_1,x_2\in (0,1)\setminus\{c\}$ be such that $f(x_1)=f(x_2)=g(1)$. Then $(x_1,1),(x_2,1)\in\Gamma(g^{-1}\circ f)$ are endpoints of $g^{-1}\circ f$. Thus $(x_1,1)=(g(0),f(0))$, or $(x_2,1)=(g(0),f(0))$, which implies $f(0)=1$. However, we assumed $f(0)=0$, which is a contradiction.
	
	Assume that $g(0)\in(0,1)$, and let again $x_1,x_2\in (0,1)\setminus\{c\}$ be such that $f(x_1)=f(x_2)=g(0)$. Then $(x_1,0),(x_2,0)\in\Gamma(g^{-1}\circ f)$ are endpoints of $g^{-1}\circ f$. Thus $(x_1,0)=(g(1),f(1))$, or $(x_2,0)=(g(1),f(1))$, so $g(1)\in\{x_1,x_2\}$. However, we have proved that $g(1)\in\{0,1\}$, which is a contradiction.
	
	Thus $g(0),g(1)\in\{0,1\}$. Note that if $g(0)=g(1)=0$, then $(0,0),(0,1),(1,0),(1,1)\in\Gamma(g^{-1}\circ f)$ are all endpoints, which contradicts the fact that there are only two endpoints. On the other hand, if $g(0)=g(1)=1$, then there are no endpoints in $\Gamma(g^{-1}\circ f)$. To see that, assume for example that there is an endpoint of the form $(0,y)\in\Gamma(g^{-1}\circ f)$ where $y\not\in C_g$. But if $g(y)=f(0)=0$, then $y$ is a critical point of $g$. Other cases follow the same. Thus we again get a contradiction and conclude that $g(0)\neq g(1)\in\{0,1\}$. In particular, the number of local maxima of $g$ is equal to the number of local minima of $g$, denote it by $N$.
	
	Furthermore, we note that if $d_j\in (0,1)$ is a local maximum of $g$, then $g(d_j)=1$. Otherwise $f^{-1}(g([d_j,d_{j+1}]))$ is not connected. If all local minima of $g$ map to $0$, then $g$ is open and the proof is finished. 
	
	Assume that there is a local minimum $d_j$ of $g$ such that $g(d_j)>0$, see Figure~\ref{fig:twofold}. Thus  $|g^{-1}(0)|<N$. Recall that $N$ denotes the number of local maxima, so $|g^{-1}(1)|=N$. Denote $g^{-1}(1)=\{x_1,\ldots x_N\}$, and note that one of $0,1$ is in $g^{-1}(1)$, and other $x_i$ are critical points of $g$. We have $(g(x_i),f(x_i))\in\Gamma(f\circ g^{-1})=\Gamma(g^{-1}\circ f)$, and they are all the points in $\Gamma(g^{-1}\circ f)$ with first coordinate equal to $1$. Moreover, one of them is going to be an endpoint, and others are hats of $f^{-1}\circ g$, and are thus different. Thus there is exactly $N$ points in $\Gamma(g^{-1}\circ f)$ with first coordinate equal to $1$. Hence, $|g^{-1}(f(1))|=N$. On the other hand, $g^{-1}(f(1))=g^{-1}(0)$, and $|g^{-1}(0)|<N$, which is a contradiction. 
	That finishes the proof.
\end{proof}

\begin{figure}
	\centering
	\begin{tikzpicture}[scale=2.5]
	\draw (0,0)--(0.5,1)--(1,0);
	\node[above] at (0.5,1) {$f$};
	\draw[dashed] (0,0)--(3.2,0);
	\draw[dashed] (0,1)--(3.2,1);
	\node[left] at (0,0) {\small 0};
	\node[left] at (0,1) {\small 1};
	\draw[dashed] (0.5,1)--(0.5,0);
	\node[below] at (0.5,0) {\small $c$};
	
	\draw (1.5,0)--(2,1)--(2.5,0.5)--(3,1);
	\node[above] at (2.25,1) {$g$};
	\draw[dashed] (2,1)--(2,0);
	\node[below] at (2,0) {\small $x_1$};
	\draw[dashed] (2.5,0.5)--(2.5,0);
	\node[below] at (2.5,0) {\small $d_j$};
	\draw[dashed] (3,1)--(3,0);
	\node[below] at (3,0) {\small $x_2$};
	\draw[->] (3.2,0.5)--(3.5,0.5);
	\end{tikzpicture}
	\begin{tikzpicture}[scale=2.5]
	\draw[dashed] (0,-0.1)--(0,1.1);
	\draw[dashed] (0.5,-0.1)--(0.5,1.1);
	\draw[dashed] (1,-0.1)--(1,1.1);
	\draw (0,0)--(0.5,1/3)--(0.25,2/3)--(0.5,1)--(0.75,2/3)--(0.5,1/3)--(1,0);
	\node[below] at (0.5,-0.1) {\small $c$};
	\node[below] at (0,-0.1) {\small $0$};
	\node[below] at (1,-0.1) {\small $1$};
	\node[above] at (0.5,1.1) {\small $g^{-1}\circ f$};
	\end{tikzpicture}
	\caption{If $f\circ g^{-1}=g^{-1}\circ f$ are as in the figure, then $(g(x_1),f(x_1))=(1,f(x_1))$ and $(g(x_2),f(x_2))=(1,f(x_2))$ are two different points of $\Gamma(f\circ g^{-1})=\Gamma(g^{-1}\circ f)$. However, $g^{-1}(f(1))=\{0\}$ is a single point which gives a contradiction. See the proof of Proposition~\ref{twofold}.}
	\label{fig:twofold}
\end{figure}
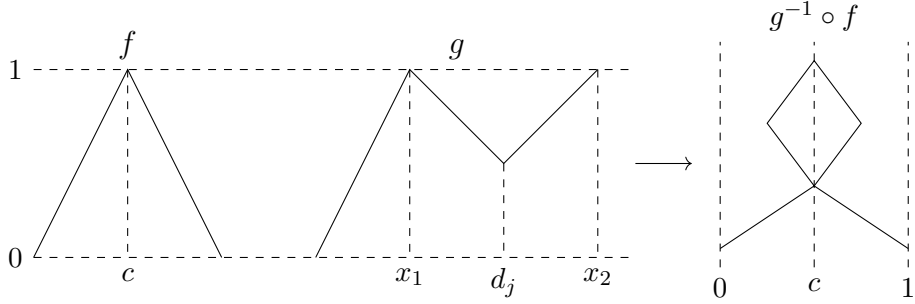

\begin{remark}\label{odd fold} It should be noted that in the previous proof, $g$ is not only open, but also must have an odd number of monotone pieces if it is non-monotone.
\end{remark}

\begin{lemma}\label{prescrit}
	If $d$ is a critical point of $g$, then $f(d)$ is a critical point of $g$.
	Furthermore, if $d\in C_g$ and $y\not\in C_f$,  $(f(d),y)$ is a hat of $f^{-1}\circ g$. (Similarly, if  $c\in C_f$ and $x\not\in C_g$, then $(x,g(c))$ is a hat of $g^{-1}\circ f$.)
\end{lemma}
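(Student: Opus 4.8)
The plan is to deduce the whole statement from Corollary~\ref{cor:monotone_param}, exploiting that the hypothesis $f\circ g^{-1}=g^{-1}\circ f$ is symmetric in $f$ and $g$ --- it is equivalent to $g\circ f^{-1}=f^{-1}\circ g$, as observed at the start of Section~\ref{sec:hats}. Hence every result of Section~\ref{sec:hats} holds with the roles of $f$ and $g$ interchanged. In particular the notions of hat and endpoint are available for $\Gamma(f^{-1}\circ g)$: this graph is parametrised by $t\mapsto(f(t),g(t))$ (since $\Gamma(f^{-1}\circ g)=\Gamma(g\circ f^{-1})$), and a point $(x,y)\in\Gamma(f^{-1}\circ g)$ is a hat precisely when $x\in C_g$ and $y\notin C_f$, that is, when the ``inner'' map $g$ is critical at $x$ while the ``inverted'' map $f$ is not critical at $y$.

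First I would apply Corollary~\ref{cor:monotone_param} to the pair $(g,f)$ in place of $(f,g)$. This gives that $\Gamma(f^{-1}\circ g)$ has exactly $m=|C_g|$ hats, namely the pairwise distinct points $(f(d),g(d))$ for $d\in C_g$. By the description of a hat of $f^{-1}\circ g$ just recalled, the statement that $(f(d),g(d))$ is a hat is exactly the conjunction $f(d)\in C_g$ and $g(d)\notin C_f$; the first half is precisely the first assertion of the lemma (and the second half recovers a piece of Corollary~\ref{cor:commoncritpt}). The ``furthermore'' is then immediate: if $d\in C_g$ and $y\notin C_f$ satisfy $(f(d),y)\in\Gamma(f^{-1}\circ g)$ (equivalently $f(y)=g(f(d))$), then $f(d)\in C_g$ by what was just proved and $y\notin C_f$ by hypothesis, so $(f(d),y)$ fulfils the definition of a hat of $f^{-1}\circ g$. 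The parenthetical claim is the mirror image under $f\leftrightarrow g$: applying Corollary~\ref{cor:monotone_param} to the original pair $(f,g)$ shows that the hats of $\Gamma(g^{-1}\circ f)$ are exactly the points $(g(c),f(c))$, $c\in C_f$, whence $g(c)\in C_f$ for all $c\in C_f$, and then any $(g(c),x)\in\Gamma(g^{-1}\circ f)$ with $x\notin C_g$ satisfies the definition of a hat of $g^{-1}\circ f$.

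I expect there to be no genuinely new difficulty here: the substantive content --- that $f$ carries $C_g$ into $C_g$ --- is already packaged in the hat/endpoint count of Section~\ref{sec:hats}, and the only point requiring care is the bookkeeping, namely transporting Corollary~\ref{cor:monotone_param} correctly under the interchange of $f$ and $g$ and keeping straight which of $C_f$, $C_g$ belongs in each coordinate slot of the definition of a hat. A self-contained local argument is also available --- fix $d\in C_g$, use $d\notin C_f$ from Corollary~\ref{cor:commoncritpt} so that $f$ is strictly monotone near $d$, and examine the ``hat profile'' of the locally monotone parametrization $t\mapsto(f(t),g(t))$ of $\Gamma(g\circ f^{-1})$ at $t=d$ (strictly monotone first coordinate, strict interior extremum in the second), arguing as in Lemmas~\ref{lem:hat} and~\ref{lem:endpt} --- but this requires a small case distinction depending on whether $g(d)$ is a critical point of $f$, so quoting Corollary~\ref{cor:monotone_param} is cleaner.
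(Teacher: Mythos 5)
Your proof is correct, but it takes a genuinely different route from the paper's. The paper argues locally from plain commutativity: since $d\notin C_f$ by Corollary~\ref{cor:commoncritpt}, $f$ is monotone on a neighbourhood of $d$; if $f(d)$ were not in $C_g$, then $g$ would be monotone near $f(d)$, so $g\circ f$ would be monotone on some neighbourhood $W$ of $d$, while $f\circ g|_W=g\circ f|_W$ cannot be monotone because $g$ folds at $d$ and $f$ is nowhere constant --- a contradiction. You instead read the conclusion off the global classification in Corollary~\ref{cor:monotone_param} applied to the swapped pair: the $m=|C_g|$ hats of $\Gamma(f^{-1}\circ g)$ are exactly the points $(f(d_j),g(d_j))$ with $d_j\in C_g$, and the definition of a hat of $f^{-1}\circ g$ then forces $f(d_j)\in C_g$ (and $g(d_j)\notin C_f$). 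This is legitimate and non-circular, since the corollary rests only on the hat/endpoint counting of Section~\ref{sec:hats} and never invokes this lemma; given the machinery already in place it is arguably the shorter derivation, whereas the paper's argument is more self-contained, using only $C_f\cap C_g=\emptyset$ together with $f\circ g=g\circ f$. Two small points you handled correctly but should make explicit: the parenthetical claim as printed, $(x,g(c))$, has its coordinates transposed --- the point that is a hat of $g^{-1}\circ f$ is $(g(c),x)$, exactly as your mirrored statement records --- and the ``furthermore'' is only meaningful under the implicit hypothesis that the point lies in the graph, i.e.\ $f(y)=g(f(d))$, which you correctly flag.
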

\begin{proof} It follows from Corollary \ref{cor:commoncritpt} that $d$ is not a critical point of $f$. Hence, there is a open neighborhood $U$ about $d$ such that $f|_U$ is monotone. Suppose on the contrary that $f(d)$ is not a critical point of $g$. Then there exist an open neighborhood $V$ of $f(d)$ such that $g|_V$ is monotone. Thus, by continuity, it follows that there exists an open neighborhood $W$ of $d$ such that $g\circ f|_W$ is monotone. However, $g|_W$ is not monotone and hence $f\circ g|_W= g\circ f|_W$ is not monotone, which is a contradiction. Hence, if $d\in C_g$ and $y\not\in C_f$, then  $(f(d),y)$ is a hat of $f^{-1}\circ g$.
\end{proof}

\begin{lemma}\label{lem:open}
	If $[a,b]\subset I$ is such that $b\in C_f$, and $f|_{[a,b]}$ is a two-fold (\ie $f(a)=f(b)$ and there is a unique critical point $c$ of $f$ in $(a,b)$), then $g|_{g^{-1}(f([a,b]))}$ is an open map.
\end{lemma}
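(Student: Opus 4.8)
We may assume $f(c)>f(a)$; the case $f(c)<f(a)$ is symmetric, with the roles of maxima and minima of $g$ interchanged throughout. Write $K:=f([a,b])=[f(a),f(c)]$ and $L:=g^{-1}(K)$, so $g(L)=K$ since $g$ is onto. I would start from the elementary characterization of open maps between intervals: a piecewise monotone surjection of intervals is open precisely when every interior local maximum attains the top value of the target, every interior local minimum attains the bottom value, and the endpoints of the domain are sent to endpoints of the target. I would then observe that for a component $[\alpha,\beta]$ of $L$ with $0<\alpha,\beta<1$ one automatically has $g(\alpha),g(\beta)\in\{f(a),f(c)\}$ (by continuity, since $g$ leaves $K$ just outside such a component), so the endpoint condition is automatic there. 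This reduces the lemma to three claims:
\begin{itemize}
\item[(M)] $g$ has no local maximum with value in $(f(a),f(c))$;
\item[(m)] $g$ has no local minimum with value in $(f(a),f(c))$;
\item[(E)] $g(0),g(1)\notin(f(a),f(c))$.
\end{itemize}
For (M), suppose $d$ is a local maximum of $g$ with $g(d)=w\in(f(a),f(c))$. Since $w$ lies strictly between $f(a)=f(b)$ and $f(c)$ and $K=f([a,c])=f([c,b])$, there are $y_1\in(a,c)$ and $y_2\in(c,b)$ with $f(y_1)=f(y_2)=w$, so $y_1<c<y_2$. Taking a maximal monotone lap $J$ of $g$ with endpoint $d$, one has $w\in g(J)$ but $f(c)>w\ge\max g(J)$, hence $y_1,y_2\in f^{-1}(g(J))$ while $c\notin f^{-1}(g(J))$, contradicting connectedness of $f^{-1}(g(J))$ — the $f\leftrightarrow g$ mirror of Lemma~\ref{lem:connected}, which is legitimate since (as noted in Section~\ref{sec:hats}) these statements are symmetric in $f$ and $g$.

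For (m) and (E) I would put $S:=\{x\in I:g(x)\in(f(a),f(c)),\ x\text{ a local minimum of }g\text{ or }x\in\{0,1\}\}$ and suppose, for contradiction, that $S\neq\emptyset$; set $\mu:=\min_{x\in S}g(x)\in(f(a),f(c))$. Applying Lemma~\ref{lem:g11} to the decreasing branch $f|_{[c,b]}$ (critical point $b$, left endpoint $c$; the hypothesis $f((c,b))=(f(b),f(c))=(f(a),f(c))$ holds) gives $f(x)<\mu$ for all $x\ge b$. Letting $c'$ be the critical point of $f$ immediately to the left of the apex $c$ (with $c'=0$ if there is none), one has $f(c')\le f(a)<\mu$, $f$ increasing on $[c',c]$, and $a\in[c',c)$; running the argument of Lemma~\ref{lem:g11} with left and right reversed — a fold with an increasing branch on its right, the proof being verbatim the same via connectedness of the relevant $f^{-1}(g([d_k,d_{k+1}]))$ — yields $f(x)<\mu$ for all $x\le c'$, hence (as $f<\mu$ on $[c',a]$ too) $f(x)<\mu$ for all $x\le a$. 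Since $f$ is onto and $f([a,b])=[f(a),f(c)]$, this forces $[0,1]\subseteq[0,\mu)\cup[f(a),f(c)]=[0,f(c)]$, so $f(c)=1$ and then $f^{-1}(1)=\{c\}$ (as $f<\mu<1$ off $[a,b]$, and $f$ hits $1$ only at $c$ inside $[a,b]$). To rule this residual configuration out I would argue as in Step~4 of the proof of Proposition~\ref{twofold}: fixing $x_0\in S$ with $g(x_0)=\mu$, the (M)-argument with $\mu$ in place of $w$ forces the monotone laps of $g$ adjacent to $x_0$ to reach the value $1$, so $|g^{-1}(1)|$ is large; but every point of $\Gamma(f\circ g^{-1})=\Gamma(g^{-1}\circ f)$ on the line $x=1$ is $(g(z),f(z))=(1,f(z))$ with $z\in g^{-1}(1)$, distinct $z$ giving distinct points by Lemma~\ref{lem:cross} (since $1\notin C_f$), while $f^{-1}(1)=\{c\}$ makes $(g(c),1)$ the unique point of the graph on the line $y=1$; comparing these counts against the exact hat/endpoint inventory of Corollary~\ref{cor:monotone_param} produces a contradiction. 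Hence $S=\emptyset$, which is exactly (m) and (E), completing the proof.

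\textbf{Main difficulty.} The hard part will be the residual case $f(c)=1,\ f^{-1}(1)=\{c\}$: Lemma~\ref{lem:g11} and the connectedness lemma only \emph{localize} the offending value $\mu$, and excluding this global picture requires the careful count of hats, endpoints, and preimages of extreme values of $\Gamma(g^{-1}\circ f)$ indicated above, in the spirit of Proposition~\ref{twofold}. A minor subsidiary point is to check that the component-by-component reduction at the start is valid — i.e.\ that $L$ has no isolated points, equivalently that $g$ has no local extremum with value exactly $f(a)$ — which follows along the same lines.
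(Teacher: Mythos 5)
Your overall route is essentially the paper's: connectedness of $f^{-1}(g(\mathrm{lap}))$ (the mirror of Lemma~\ref{lem:connected}) to kill interior local maxima of $g$ with values in the interior of $K$, then Lemma~\ref{lem:g11} and its left--right mirror to force the residual configuration $f(c)=1$, $f^{-1}(1)=\{c\}$, $f<\mu$ off $[a,b]$, and finally a hat/endpoint count to kill that configuration. Claims (M), the derivation of the residual configuration, and the subsidiary point about the value $f(a)$ are all sound and match the paper.

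The gap is in the last step. Your count shows that some critical point $d\in C_g$ satisfies $g(d)=1$, hence (via Lemma~\ref{prescrit} and the hat $(f(d),1)$ of $f^{-1}\circ g$) that $g^{-1}(f(1))$ contains the critical point $f(d)$ of $g$. This contradicts the structure of $g$ only when $f(1)$ is a value that no critical point of $g$ can attain, i.e.\ when $f(1)\in(f(a),\mu)$: there your (M)-argument pins local maxima of $g$ to values $\le f(a)$ or $=1$, and minimality of $\mu$ pins local minima to values $\le f(a)$ or $\ge\mu$. But the residual configuration also allows $f(1)\le f(a)$ (and symmetrically $f(0)\le f(a)$), and in that range $g$ may perfectly well have critical points with value $f(1)$; the comparison of the lines $x=1$ and $y=1$ with the inventory of Corollary~\ref{cor:monotone_param} yields nothing, because the points $(1,f(z))$, $z\in C_g\cap g^{-1}(1)$, are neither hats nor endpoints of $\Gamma(g^{-1}\circ f)$ and so are not constrained by that inventory. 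This is precisely where the paper spends the second half of its proof: it first shows the preimage of $f(c_i)$ in $[l,r]$ is a single boundary point, deduces $g(0)=0$ from the endpoint inventory, sets $\tilde\nu=\min g(C_g)>0$, locates a critical point $c_\iota$ with $f(c_\iota)=0$, and applies Lemma~\ref{lem:g11} a second time at that fold to force $f(1)<\tilde\nu$, so that $g^{-1}(f(1))$ contains no critical point after all. Some version of this extra chain of arguments (controlling $f$ and $g$ below the base value $f(a)$ of the fold) is unavoidable, and your proposal does not contain it.
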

\begin{proof}
	In light of Proposition \ref{twofold} we may assume that $f$ itself is not a full two-fold and hence $[a,b]$ is a proper subinterval of $I$. Also, we may assume that $g$ is not monotone. 
	Let $i\in\{1,\ldots,n\}$ be such that $c=c_{i-1}, b=c_i$.
	By Lemma~\ref{lem:connected}, $g^{-1}(f([c_{i-1},c_{i}]))$ is connected, and denote $g^{-1}(f([c_{i-1},c_i]))=[l,r]$. We assume without loss of generality that $f|_{[a, c_{i-1}]}$ is increasing, \ie $c=c_{i-1}$ is a local maximum. 
	
	We need to exclude the existence of a critical point $d_j\in (l,r)$ of $g$ such that $g(d_j)\neq f(c_{i-1}),f(c_i)$. Note that for every $x\in (l,r)$ it holds that $g(x)\in f([c_{i-1},c_{i}])$. Moreover, if $l=0$, we have to show $g(0)\in\{f(c_{i-1}),f(c_i)\}$, and similarly if $r=1$, we have to show $g(1)\in\{f(c_{i-1}),f(c_i)\}$. Let $d_0=0, d_{m+1}=1$, and assume there is  $d_j\in [l,r]$ such that $g(d_j)\neq f(c_{i-1}),f(c_i)$. If $d_j$ is a local maximum of $g$, then $f^{-1}(g([d_j,d_{j+1}]))$ is not connected, which is a contradiction. Thus every $d_j\in[l,r]$ such that $g(d_j)\not\in\{f(c_{i-1}),f(c_i)\}$ is a local minimum of $g$, or $d_j\in\{0,1\}$.
	
	Actually, note that if $d_j\neq d_0=0$ is as above, then $d_{j-1}\in(l,r)$ and $g(d_{j-1})=f(c_{i-1})$. Similarly, if $l=0$, then $d_1\in(l,r)$ and $g(d_1)=f(c_{i-1})$. Specifically, $g^{-1}(f(c_{i-1}))$ contains at least one critical point of $g$, denote it by $d$.
	
	Let $S$ be as in Lemma~\ref{lem:g11} with $\alpha=c_{i-1}$, and note that $S\neq\emptyset$ by assumption. Let $\mu=\min f(S)=g(d_k)$, for some $k\in\{0,\ldots,m+1\}$. Then Lemma~\ref{lem:g11} implies that $f(x)<\mu$ for all $x\in [0,a)\cup(b,1]$. Specifically, $f(c_{i-1})=1$, and thus $g^{-1}(1)$ contains at least one critical point $d\in C_g$. By Corollary~\ref{cor:commoncritpt}, $d$ is not a critical point of $f$, and thus, by Lemma \ref{prescrit}, $(f(d),g(d))=(f(d),1)\in\Gamma(g\circ f^{-1})=\Gamma(f^{-1}\circ g)$ is a hat of $f^{-1}\circ g$.
	
	First, we are going to show that there is only one point $d'\in [l,r]$, such that $g(d')=f(c_i)$. In fact, we will show that $d'\in \{l,r\}$.
	
	Assume first that there is $d'\in (l,r)$ such that $g(d')=f(c_i)$, see Figure~\ref{fig:op} for example. Since $g^{-1}(f([c_{i-1},c_i]))=[l,r]$,  $d'$ must be a critical point (local minimum) of $g$. Note that $d_j$ is a different critical point of $g$. So $m\geq 2$. Let $j'\in\{1,\ldots,m\}$ be such that $d'=d_{j'}$. Then $g(d_{j'-1})=g(d_{j'+1})=1$, and $g(x)\geq \mu$ for all $x<d_{j'-1}$, and $x>d_{j'+1}$ (otherwise $g^{-1}(f([c_i,c_{i+1}]))$ is not connected). So, since $g$ is onto, it follows that $g(d_{j'})=f(c_i)=0$. Moreover, since $m\geq 2$, at least one of $j'-1,j'+1$ is contained in $\{1,\ldots,m\}$, \ie $d_{j'-1}$ or $d_{j'+1}$ is a critical point of $g$. Specifically, $(f(d_{j'-1}),g(d_{j'-1}))=(f(d_{j'-1}),1)\in\Gamma(f^{-1}\circ g)$, and $(f(d_{j'+1}),g(d_{j'+1}))=(f(d_{j'+1}),1)\in\Gamma(f^{-1}\circ g)$. Thus there are at least two points in $\Gamma(f^{-1}\circ g)$ with second coordinate $1$, and at least one of them is a hat of $f^{-1}\circ g$.
	
	Since $f(1)<\mu$, it follows that $g^{-1}(f(1))$ consists of two non-critical points $x,y$ of $g$, or it is a singleton $d_{j'}$. The first case (see for example Figure~\ref{fig:op}) implies that there are two endpoints $(x,1),(y,1)$ of $f^{-1}\circ g$, and the second gives one hat $(d_{j'},1)$ of $f^{-1}\circ g$. However, we know that $f^{-1}\circ g$ has at least two points with second coordinate $1$, and at least one of them needs to be a hat of $f^{-1}\circ g$, thus we get a contradiction. Thus if $d'\in[l,r]$ is such that $g(d')=f(c_{i})$, then $d'\in\{l,r\}$. 
	
	Next, note that exactly one of $g(l),g(r)$ is equal to $f(c_i)$, otherwise $g^{-1}(f([c_i,c_{i+1}]))$ is not connected. Assume without loss of generality that $g(l)=f(c_i)$, and see Figure~\ref{fig:open2} for an example in this case. Thus $g(x)>f(c_i)$ for all $x>l$. Also, $g(y)<f(c_i)$ for all $y<l$, otherwise, again, $g^{-1}(f([c_i,c_{i+1}]))$ is not connected. 
	
	Let $\tilde{\mu}=\min\{x\in I: g(x)=\mu\}$, and note that $\tilde{\mu}>l$ and $g|_{[l,\tilde{\mu}]}$ is monotone. It follows that if $f(1)\geq f(c_i)$, then $g^{-1}(f(1))$ is a singleton $\{z\}$. That is, $(z,1)\in\Gamma(f^{-1}\circ g)$ is the only point in $\Gamma(f^{-1}\circ g)$ with the second coordinate $1$. Moreover, since $z\not\in C_g$, and $1\not\in C_f$, the point $(z,1)$ is not a hat of $f^{-1}\circ g$. However, we know that there exists at least one hat of $f^{-1}\circ g$ with second coordinate $1$, and that is a contradiction.
	
	We conclude that $f(1)<f(c_i)$ (specifically $f(c_i)>0$).
	Since $c_{i-1}$ is a critical point of $f$ such that $f(c_{i-1})=1$, it follows from Lemma \ref{prescrit}, that $(1,g(c_{i-1}))$ is a hat of $f^{-1}\circ g$. Therefore, $g^{-1}(f(1))$ has to contain at least one critical point of $g$, so there is some $j''\in\{0,1,\ldots,m\}$ such that $d_{j''}<l$. Let $\nu=\max\{g(x): x<l, x\in C_g\cup\{0\}\}$, and note that $f(1)\leq\nu$. It follows that $f(0)>\nu$, otherwise $f^{-1}(g([d_{j'''},d_{j'''+1}]))$ is not connected, where $\nu=g(d_{j'''})$. Specifically, $g^{-1}(f(0))$ consists of a single point $x$ which is not a critical point of $g$, thus $(0,x)$ is an endpoint of $g^{-1}\circ f$. Since $g^{-1}\circ f=f\circ g^{-1}$, point $(0,x)\in g^{-1}\circ f$ equals $(g(t),f(t))$ for some $t\in I$, and since it is an endpoint it must be $t=0$. We conclude that $g(0)=0$ and $g(t)>0$ for all $t>0$.
	
	Now let $\tilde{\nu}=\min\{g(x): x\in C_g\}$. Since $g(C_g)\subset (0,1]$, it follows that $\tilde{\nu}>0$. Now note that $f(1)\neq 0$, otherwise again $g^{-1}(f(1))=\{0\}$ is a singleton and contains no critical point. Since $f$ is onto, there exist a critical point $c_{\iota}\in C_f$, such that $f(c_{\iota})=0$, and take the smallest such $c_{\iota}$. Let $\alpha=c_{i-1}$. Then $\alpha<c_{\iota}$,  $f((\alpha,c_{\iota}))=(0,1)=(f(c_{\iota}), f(\alpha))$, and $$f(c_{\iota})=0<\tilde{\nu}<f(\alpha).$$ Hence, by  Lemma~\ref{lem:g11}, $f(x)<\tilde{\nu}$ for every $x>c_{\iota}$. In particular, $f(1)<\tilde{\nu}$, so $g^{-1}(f(1))$ is a singleton $\{x\}$, where $x$ is not a critical point of $g$, which is again a contradiction. 
\end{proof}

\begin{figure}
	\centering
	\begin{tikzpicture}[scale=1.5]
	\draw (0,0)--(0.5,2)--(1,0)--(1.1,0.35)--(1.2,0)--(1.3,0.31)--(1.4,0)--(1.6,0.3);
	\draw[dashed] (-0.2,2)--(5.5,2);
	\draw[dashed] (-0.2,0)--(5.5,0);
	\node[below] at (0,0) {\small $a$};
	\node[below] at (0.5,0) {\small $c_{i-1}$};
	\node[below] at (1,0) {\small $c_i$};
	\node[left] at (-0.2,0) {\small $f(c_i)$};
	\node[left] at (-0.2,2) {\small $f(c_{i-1})$};
	\node[above] at (0.5,2) {$f$};
	\node[below] at (1.6,0) {\scriptsize 1};
	\node[below] at (1.4,0) {\small $c_n$};
	
	\draw[dashed] (0,0.4)--(5.2,0.4);
	\node[right] at (5.2,0.4) {\small $\mu$};
	\draw (2.2,0.5)--(2.5,1.3)--(2.7,0.9)--(3,2)--(3.5,0)--(4,2)--(4.5,0.4)--(4.75,1)--(4.8,0.9);
	\node[above] at (3.5,2) {$g$};
	\node[below] at (2.2,0) {\small $l$};
	\node[below] at (4.8,0) {\small $r$};
	\node[below] at (3.5,0) {\small $d_{j'}$};
	\node[below] at (3,0) {\small $d_{j'-1}$};
	\node[below] at (4,0) {\small $d_{j'+1}$};
	\draw[dashed] (3,2)--(3,0);
	\draw[dashed] (4,2)--(4,0);
	\end{tikzpicture}
	\begin{tikzpicture}[scale=1.5]
	\draw[dashed] (0,0)--(0,2);
	\node[below] at (0,0) {\small $c_n$};
	\draw[dashed] (0.5,0)--(0.5,2);
	\node[below] at (0.5,0) {\small 1};
	\draw (0.5,1.2)--(0,1)--(0.5,0.8);
	\node[above] at (0.25,2) {\small $g^{-1}\circ f|_{[c_n,1]}$};
	
	\draw[fill] (0.5,1.2) circle(0.03);
	\draw[fill] (0.5,0.8) circle(0.03);
	
	\node[right] at (0.5,1.2) {\small $C$};
	\node[right] at (0.5,0.8) {\small $D$};
	
	\node at (1.1,1) {$\neq$};
	
	\draw[dashed] (1.5,0)--(1.5,2);
	\draw[dashed] (2,0)--(2,2);
	\node[below] at (2,0) {\small 1};
	\node[above] at (1.75,2) {\small $f\circ g^{-1}|_{[c_n,1]}$};
	\draw (1.8,1.6)--(2,1.4)--(1.8,1.2);
	\draw (1.8,0.8)--(2,0.6)--(1.8,0.4);
	\draw[fill] (2,1.4) circle(0.03);
	\draw[fill] (2,0.6) circle(0.03);
	
	\node[right] at (2,1.4) {\small $A$};
	\node[right] at (2,0.6) {\small $B$};
	\end{tikzpicture}
	\caption{Reaching a contradiction in the first part of the proof of Lemma~\ref{lem:open}. Here $A=(g(d_{j'+1}),f(d_{j'+1}))$, $B=(g(d_{j'-1}),f(d_{j'-1}))$, $C=(1,x)$, and $D=(1,y)$.}
	\label{fig:op}
\end{figure}
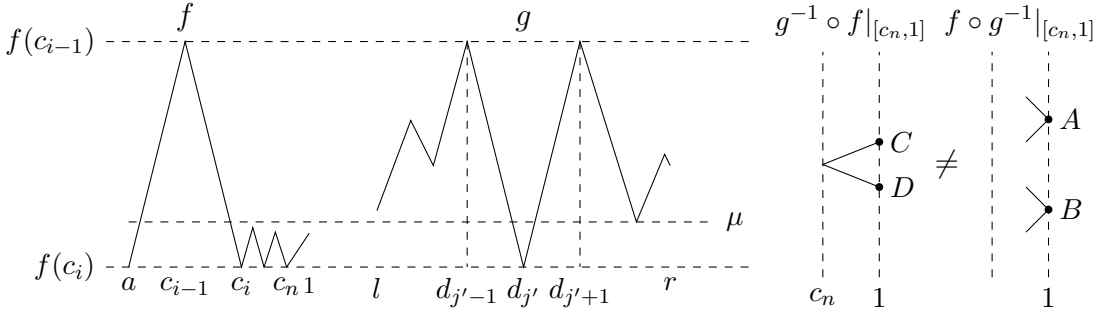

\begin{figure}
	\centering
	\begin{tikzpicture}[scale=1.5]
	\draw (-0.2,-0.2)--(0.5,2)--(1,0)--(1.1,0.35)--(1.4,-2)--(1.5,-1.6)--(1.6,-1.8)--(1.8,-1.5);
	\draw[dashed] (-0.3,2)--(5.5,2);
	\draw[dashed] (-0.3,0)--(5.5,0);
	\draw[dashed] (-0.13,0)--(-0.13,-2);
	\node[below] at (-0.13,-2) {\small $a$};
	\draw[dashed] (0.5,2)--(0.5,-2);
	\node[below] at (0.5,-2) {\small $c_{i-1}$};
	\draw[dashed] (1,0)--(1,-2);
	\node[below] at (1,-2) {\small $c_i$};
	\node[left] at (-0.3,0) {\small $f(c_i)$};
	\node[left] at (-0.3,2) {\small $f(c_{i-1})$};
	\node[above] at (0.5,2) {$f$};
	\draw[dashed] (1.8,-1.5)--(1.8,-2);
	\node[below] at (1.8,-2) {\scriptsize 1};
	\draw[dashed] (1.6,-1.8)--(1.6,-2);
	\node[below] at (1.6,-2) {\small $c_n$};
	\node[below] at (1.4,-2) {\small $c_{\iota}$};
	
	\draw[dashed] (-0.1,0.4)--(5.2,0.4);
	\node[right] at (5.2,0.4) {\small $\mu$};
	\draw[dashed] (-0.1,-1)--(5.2,-1);
	\node[right] at (5.2,-1) {\small $\nu$};
	\draw[dashed] (-0.1,-1.4)--(5.2,-1.4);
	\node[right] at (5.2,-1.4) {\small $\tilde\nu$};
	\draw (2.2,-2)--(2.5,-1)--(2.7,-1.4)--(3.5,2)--(4,0.4);
	\node[above] at (3.5,2) {$g$};
	\draw[dashed] (4,0.4)--(4,-2);
	\node[below] at (4,-2) {\small $r$};
	\draw[dashed] (3.5,2)--(3.5,-2);
	\node[below] at (3.5,-2) {\small $d$};
	\draw[dashed] (3.02,0)--(3.02,-2);
	\node[below] at (3,-2) {\small $l$};
	\node[below] at (2.2,-2) {\scriptsize 0};
	\draw[dashed] (3.5,2)--(3.5,0);
	\end{tikzpicture}
	\begin{tikzpicture}[scale=1.5]
	\draw[dashed] (0,-2)--(0,2);
	\node[below] at (0,-2) {\small $c_n$};
	\draw[dashed] (0.5,-2)--(0.5,2);
	\node[below] at (0.5,-2) {\small 1};
	\draw (0,-1.6)--(0.5,-1.4);
	\node[above] at (0.25,2) {\small $g^{-1}\circ f|_{[c_n,1]}$};
	
	\node at (1,0) {$\neq$};
	
	\draw[dashed] (1.5,-2)--(1.5,2);
	\draw[dashed] (2,-2)--(2,2);
	\node[below] at (2,-2) {\small 1};
	\node[above] at (1.75,2) {\small $f\circ g^{-1}|_{[c_n,1]}$};
	\draw (1.8,-0.3)--(2,-0.5)--(1.8,-0.7);
	\draw[fill] (2,-0.5) circle(0.03);
	
	\node[right] at (2,-0.5) {\small $A$};
	\end{tikzpicture}
	\caption{Reaching a contradiction in the second part of the proof of Lemma~\ref{lem:open}. Here $A=(g(d),f(d))$.}
	\label{fig:open2}
\end{figure}
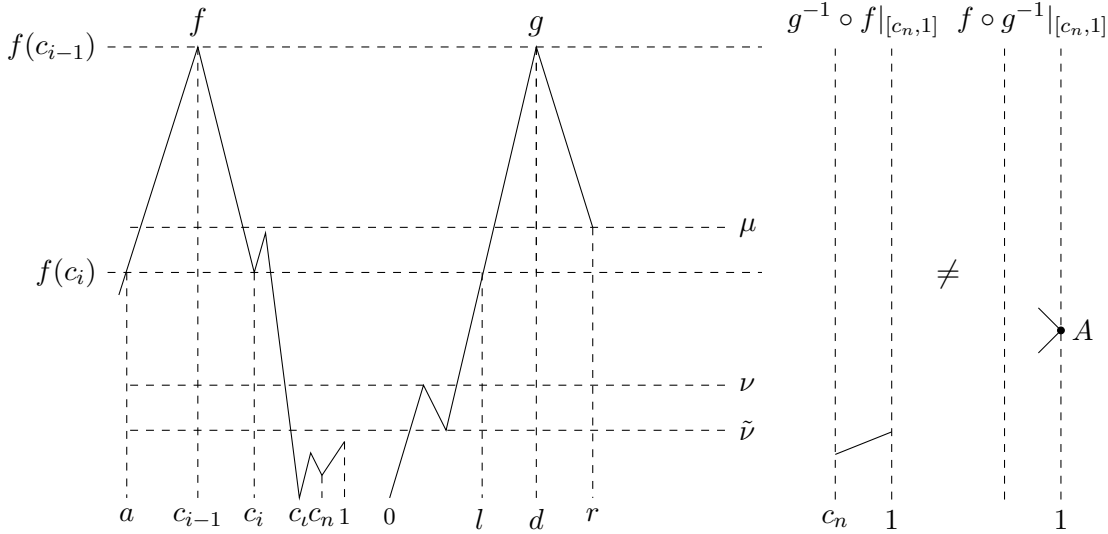

\begin{definition}\label{def:primcritval}
	Let $f\colon I\to I$ be continuous. We say that $v\in I$ is a {\em primary critical value} if there is a critical point $c\in C_f$ such that $f(c)=v$, and if one of the following holds:
	\begin{itemize}
		\item $f^{-1}([0,v))$ and $f^{-1}([v,1])$ are connected, or
		\item $f^{-1}([0,v])$ and $f^{-1}((v,1])$ are connected.
	\end{itemize}
\end{definition}

Note that if $C_f$ is finite, then so is the set of primary critical values, denote them by $0\leq v_1<\ldots<v_k\leq 1$, see Figure~\ref{fig:primcritval}. If $0\not\in f(C_f)$ then denote $v_0=0<v_1$, otherwise $v_1=0$. 

\begin{figure}
	\centering
	\begin{tikzpicture}[scale=5]
	\draw (0,0)--(1,0)--(1,1)--(0,1)--(0,0);
	\draw (0,0)--(1/11,3/11)--(2/11,1/11)--(3/11,5/11)--(4/11,4/11)--(5/11,6/11)--(6/11,5/11)--(7/11,10/11)--(8/11,8/11)--(9/11,9/11)--(10/11,7/11)--(1,1);
	\draw[dashed] (0,1/11)--(1,1/11);
	\draw[dashed] (0,3/11)--(1,3/11);
	\draw[dashed] (0,4/11)--(1,4/11);
	\draw[dashed] (0,6/11)--(1,6/11);
	\draw[dashed] (0,7/11)--(1,7/11);
	\draw[dashed] (0,10/11)--(1,10/11);
	
	\draw[dashed] (1/33,0)--(1/33,1);
	\draw[dashed] (10/44,0)--(10/44,1);
	\draw[dashed] (11/44,0)--(11/44,1);
	\draw[dashed] (31/55,0)--(31/55,1);
	\draw[dashed] (32/55,0)--(32/55,1);
	\draw[dashed] (43/44,0)--(43/44,1);
	
	\node[below] at (0-0.015,0) {\small $t_0$};
	\node[below] at (1/33+0.02,0) {\small $t_1$};
	\node[below] at (10/44-0.015,0) {\small $t_2$};
	\node[below] at (11/44+0.02,0) {\small $t_3$};
	\node[below] at (31/55-0.015,0) {\small $t_4$};
	\node[below] at (32/55+0.025,0) {\small $t_5$};
	\node[below] at (43/44-0.015,0) {\small $t_6$};
	\node[below] at (1+0.02,0) {\small $t_7$};
	
	\node[left] at (0,0) {\small $v_0$};
	\node[left] at (0,1/11) {\small $v_1$};
	\node[left] at (0,3/11) {\small $v_2$};
	\node[left] at (0,4/11) {\small $v_3$};
	\node[left] at (0,6/11) {\small $v_4$};
	\node[left] at (0,7/11) {\small $v_5$};
	\node[left] at (0,10/11) {\small $v_6$};
	\node[left] at (0,1) {\small $v_7$}; 
	\node[below] at (0.5,-0.1) {$(a)$};
	\node[above] at (0.5,1) {$f$};
	\end{tikzpicture}
	\begin{tikzpicture}[scale=5]
	\draw (0,0)--(1,0)--(1,1)--(0,1)--(0,0);
	\draw[dashed] (0,1/11)--(1,1/11);
	\draw[dashed] (0,3/11)--(1,3/11);
	\draw[dashed] (0,4/11)--(1,4/11);
	\draw[dashed] (0,6/11)--(1,6/11);
	\draw[dashed] (0,7/11)--(1,7/11);
	\draw[dashed] (0,10/11)--(1,10/11);
	\draw[dashed] (1/11,0)--(1/11,1);
	\draw[dashed] (3/11,0)--(3/11,1);
	\draw[dashed] (4/11,0)--(4/11,1);
	\draw[dashed] (6/11,0)--(6/11,1);
	\draw[dashed] (7/11,0)--(7/11,1);
	\draw[dashed] (10/11,0)--(10/11,1);
	\node[left] at (0,0) {\small $v_0$};
	\node[left] at (0,1/11) {\small $v_1$};
	\node[left] at (0,3/11) {\small $v_2$};
	\node[left] at (0,4/11) {\small $v_3$};
	\node[left] at (0,6/11) {\small $v_4$};
	\node[left] at (0,7/11) {\small $v_5$};
	\node[left] at (0,10/11) {\small $v_6$};
	\node[left] at (0,1) {\small $v_7$}; 
	\node[below] at (0,0) {\small $v_0$};
	\node[below] at (1/11,0) {\small $v_1$};
	\node[below] at (3/11,0) {\small $v_2$};
	\node[below] at (4/11,0) {\small $v_3$};
	\node[below] at (6/11,0) {\small $v_4$};
	\node[below] at (7/11,0) {\small $v_5$};
	\node[below] at (10/11,0) {\small $v_6$};
	\node[below] at (1,0) {\small $v_7$}; 
	
	\draw (0,0)--(1/11,1/11)--(7/55,3/11)--(9/55,1/11)--(11/55,3/11)--(13/55,1/11)--(3/11,3/11)--(10/33,11/33)--(11/33,10/33)--(4/11,4/11)--(6/11,6/11)--(19/33,20/33)--(20/33,19/33)--(7/11,7/11)--(10/11,10/11)--(21/22,1)--(1,21/22);
	
	\node[below] at (0.5,-0.1) {$(b)$};
	\node[above] at (0.5,1) {$g$};
	\end{tikzpicture}
	\caption{Primary critical values $\{v_i\}_{i=0}^7$ of $f$ and corresponding exacting points $\{t_i\}_{i=0}^7$ in figure $(a)$, and restrictions on map $g$ as in the statement of Lemma~\ref{monotonepatr} and Lemma~\ref{invariant} in $(b)$.}
	\label{fig:primcritval}.
\end{figure}
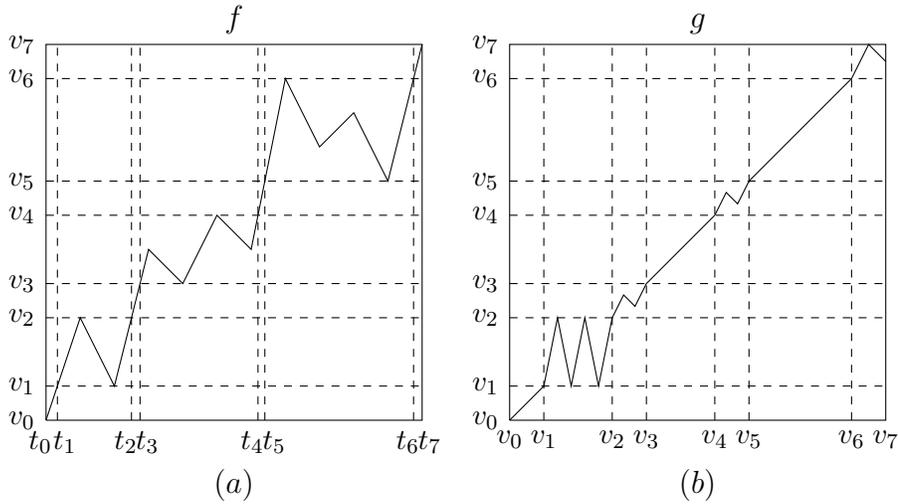

Note that $f^{-1}([v_i,v_{i+1}])=[t_i,t_{i+1}]$ (or $=[t_{i+1},t_i]$) is connected for every odd $i\in\{1,\ldots,k\}$. Also, notice that $f(t_i)=v_i$ and $f(t_{i+1})=v_{i+1}$. Thus it follows that $f$ is either order preserving or order reversing on $\{t_i\}_{i=0\mbox{ or }1}^k$. We call $t_i$ the {\it associated exacting point of} $v_i$ and note that exacting points are not critical points.
Also, since $\{v_i\}_{i=1}^k$ are critical values, there are $c,c'\in C_f\cap f^{-1}([v_i,v_{i+1}])$ such that $f(c)=v_i,f(c')=v_{i+1}$. Furthermore, $f|_{f^{-1}((v_i,v_{i+1}))}$ is monotone for every even $i\in\{0,\ldots,k\}$.

First, we will assume that both $f$ and $g$ are order preserving on their respective exacting points. Later we show that other cases reduce to this case by taking $f^2$ or $g^2$.
Since $f$ is order preserving on the exacting points $\{t_i\}_{i=0\mbox{ or }1}^k$, it holds that $t_i<t_{i+1}$ for every $i$.

\begin{lemma}\label{oddnotcon}
	Let $\{v_i\}_{i=0\mbox{ or }1}^k$ be the primary critical values of $f$. If $i\in\{1,\ldots,k\}$ is odd, and $[p,q]\subsetneq[v_i,v_{i+1}]$ then $f^{-1}([p,q])$ is not connected.
\end{lemma}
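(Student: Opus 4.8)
Throughout I work in the order-preserving case (the order-reversing cases reduce to it by passing to $f^2$ or $g^2$ as elsewhere in the section), and I will not use $g$ at all: this is a statement about $f$ and its primary critical values. I argue by contradiction, and the whole thing reduces to the one-sided claim below.

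\emph{Setup.} Since $i$ is odd, $f^{-1}([v_i,v_{i+1}])=[t_i,t_{i+1}]$ is connected, $f(t_i)=v_i$, $f(t_{i+1})=v_{i+1}$, and $v_i\le f\le v_{i+1}$ on $[t_i,t_{i+1}]$. Because $[t_i,t_{i+1}]$ is \emph{exactly} this preimage, continuity at the endpoints forces $f<v_i$ on $[0,t_i)$ and $f>v_{i+1}$ on $(t_{i+1},1]$. As $v_i,v_{i+1}$ are critical values, there are critical points $c,c'\in(t_i,t_{i+1})$ with $f(c)=v_i$, $f(c')=v_{i+1}$ (their preimages lie in $[t_i,t_{i+1}]$ and cannot be the exacting endpoints), so $f|_{[t_i,t_{i+1}]}$ attains its minimum $v_i$ and its maximum $v_{i+1}$ both at an interior point and at an endpoint. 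The degenerate case $p=q$ is then immediate ($f^{-1}(\{v_i\})\supseteq\{t_i,c\}$, $f^{-1}(\{v_{i+1}\})\supseteq\{c',t_{i+1}\}$, and $f^{-1}(\{w\})$ for $w\in(v_i,v_{i+1})$ is covered by the claim below), so assume $p<q$.

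\emph{Key claim: $f^{-1}([v_i,q])$ is disconnected for every $q\in[v_i,v_{i+1})$.} Note $f^{-1}([v_i,q])=\{x\in[t_i,t_{i+1}]:f(x)\le q\}$. Suppose some such set is connected; then it is $[t_i,b]$ with $f(b)=q$ and $f>q$ on $(b,t_{i+1}]$. I first observe that connectedness propagates a little upward: the finitely many local minimum values of $f$ on $(b,t_{i+1})$ all exceed $q$, hence are bounded below by some $\mu_0>q$, and for $0<\varepsilon<\mu_0-q$ the set $\{x>b:f(x)\le q+\varepsilon\}$ is a single interval $(b,b']$, so $f^{-1}([v_i,q+\varepsilon])=[t_i,b']$ is again connected. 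Let $q^{**}$ be the supremum of the $q'$ such that $f^{-1}([v_i,q''])$ is connected for all $q''$ between the original $q$ and $q'$. If $q^{**}=v_{i+1}$, then $f^{-1}([v_i,v_{i+1}-\varepsilon])$ is connected for all small $\varepsilon$; but it contains $t_i$ and a point near $t_{i+1}$ with value $v_{i+1}-\varepsilon$, while the interior point $c'$ between them has $f(c')=v_{i+1}>v_{i+1}-\varepsilon$ and is not in the set — a contradiction. So $q^{**}<v_{i+1}$. By propagation, $f^{-1}([v_i,q^{**}])$ must be disconnected; but $f^{-1}([v_i,q^{**}))=\{f<q^{**}\}\cap[t_i,t_{i+1}]$ is a connected set $[t_i,\beta)$ (an increasing union of connected sets containing $t_i$), with $\beta<t_{i+1}$ (else $f^{-1}(q^{**})=\emptyset$, contradicting surjectivity) and $f(\beta)=q^{**}$. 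Hence the disconnection of $f^{-1}([v_i,q^{**}])$ comes from a point $x_1\in(\beta,t_{i+1}]$ with $f(x_1)=q^{**}$; such $x_1$ cannot be a regular crossing point (on one side of which $f<q^{**}$, forcing that side into $[t_i,\beta)$), so $x_1$ is a local minimum of $f$ and $q^{**}$ is a critical value. Finally, using $f<v_i\le q^{**}$ on $[0,t_i)$ and $f>v_{i+1}\ge q^{**}$ on $(t_{i+1},1]$, we get $f^{-1}([0,q^{**}))=[0,\beta)$ and $f^{-1}([q^{**},1])=[\beta,1]$, both connected, so $q^{**}$ is a primary critical value with $v_i<q^{**}<v_{i+1}$ — contradicting that $v_i,v_{i+1}$ are consecutive primary critical values. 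This proves the claim. By the symmetric argument (minima $\leftrightarrow$ maxima, $v_i\leftrightarrow v_{i+1}$, $t_i\leftrightarrow t_{i+1}$), $f^{-1}([p,v_{i+1}])$ is also disconnected for every $p\in(v_i,v_{i+1}]$.

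\emph{Conclusion for general $[p,q]\subsetneq[v_i,v_{i+1}]$.} If $q=v_{i+1}$ (so $p>v_i$), we are done by the symmetric claim. If $q<v_{i+1}$, then $A:=f^{-1}([v_i,q])=\{x\in[t_i,t_{i+1}]:f(x)\le q\}$ is disconnected by the key claim. Since $A$ does not meet $(t_{i+1},1]$, no component of $A$ contains $t_{i+1}$, so every component of $A$ has as its right endpoint a point $b$ with $f(b)=q\ge p$ and with $f>q\ge p$ immediately to its right. Taking $b_1<b_2$ the right endpoints of two distinct components of $A$, we have $b_1,b_2\in f^{-1}([p,q])$ while $f\notin[p,q]$ on the region (containing points just right of $b_1$) separating these two components; hence $f^{-1}([p,q])$ meets two distinct components of $A$ separated by a region avoiding $[p,q]$, so $f^{-1}([p,q])$ is disconnected.

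The delicate part is the ``propagation'' step in the key claim: one must check carefully that the first value $q^{**}$ blocking upward enlargement is \emph{both} a critical value \emph{and} situated so that one of the two defining conditions of a primary critical value holds, which is exactly where the consecutiveness of $v_i$ and $v_{i+1}$ is contradicted; the remaining bookkeeping (the endpoint-attainment of $v_i,v_{i+1}$ ruling out $q^{**}=v_{i+1}$, and the component argument in the last paragraph) is routine.
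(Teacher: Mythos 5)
Your proof is correct, but it follows a genuinely different route from the paper's. The paper argues directly: it takes $c_a=\max\{c\in C_f: f(c)=v_i\}$ and $c_b=\min\{c\in C_f: f(c)=v_{i+1}\}$, lets $c_\alpha$ realize the maximum of $f$ on $[t_i,c_a]$ and $c_\beta$ the minimum on $[c_b,t_{i+1}]$, invokes the absence of an intermediate primary critical value exactly once (to conclude $f(c_\alpha)\ge f(c_\beta)$), and then in four short cases exhibits a critical point that $f^{-1}([p,q])$ straddles without containing. You instead run a compactness-style argument: assuming $f^{-1}([v_i,q])$ is connected, you propagate connectedness upward to a supremum $q^{**}$, show $q^{**}<v_{i+1}$ via the interior critical point $c'$, identify the obstruction at $q^{**}$ as a local minimum of $f$, and verify that $q^{**}$ is then a primary critical value strictly between $v_i$ and $v_{i+1}$; a symmetric claim plus a component argument finishes the general $[p,q]$. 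Both proofs rest on the same fact (consecutiveness of $v_i,v_{i+1}$), but yours isolates it more transparently at the cost of length and of some delicacy at the supremum, while the paper's is shorter and fully explicit about which critical point disconnects the preimage. Your argument is sound; the only points worth tightening are bookkeeping: the lower bound $\mu_0$ in the propagation step should also account for the endpoint value $f(t_{i+1})=v_{i+1}$ (harmless, as it exceeds $q$); in ruling out $q^{**}=v_{i+1}$ the relevant point is any point of $(c',t_{i+1})$ with value at most $v_{i+1}-\varepsilon$, which exists once $\varepsilon<v_{i+1}-\min f|_{[c',t_{i+1}]}$, rather than a point ``near $t_{i+1}$''; and the final component argument implicitly uses that $A$ has finitely many components, which follows from piecewise monotonicity.
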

\begin{proof} Let $[p,q]\subsetneq[v_i,v_{i+1}]$. Let $c_{a}=\max\{c\in C_f\mid f(c)=v_i\}$ and $c_{b}=\min\{c\in C_f\mid f(c)=v_{i+1}\}$. Let $c_{\alpha}\in [t_i,c_a]\cap C_f$ be such that $f(c_{\alpha})$ is an absolute maximum of $f|_{[t_i,c_a]}$ and $c_{\beta}\in [c_b,t_{i+1}]\cap C_f$ be such that $f(c_{\beta})$ is an absolute minimum of $f|_{[c_b,t_{i+1}]}$. Note that $f(c_{\alpha})\geq f(c_{\beta})$, otherwise $f(c_{\alpha})$ is a primary critical value between $v_i$ and $v_{i+1}$. Suppose $q<f(c_{\alpha})$, then $f^{-1}([p,q])\subset [t_i,c_{\alpha})\cup (c_{\alpha},t_{i+1}]$ with  $f^{-1}([p,q])\cap [t_i,c_{\alpha})\not=\emptyset$ and  $f^{-1}([p,q])\cap (c_{\alpha},t_{i+1})\not=\emptyset$. Hence, $f^{-1}([p,q])$ is not connected. By a similar argument,  $f^{-1}([p,q])$ is not connected if $p>f(c_{\beta})$. So suppose $v_i<p\leq f(c_{\beta})\leq f(c_{\alpha})\leq q$. Then $f^{-1}([p,q])\subset [t_i,c_{a})\cup (c_{a},t_{i+1}]$ with  $f^{-1}([p,q])\cap [t_i,c_{a})\not=\emptyset$ and  $f^{-1}([p,q])\cap (c_{a},t_{i+1})\not=\emptyset$. Hence, $f^{-1}([p,q])$ is not connected. By a similar argument,  $f^{-1}([p,q])$ is not connected if $p\leq f(c_{\beta})\leq f(c_{\alpha})\leq q<v_{i+1}$. 
\end{proof}

\begin{lemma}\label{monotonepatr}
	Let $\{v_i\}_{i=0\mbox{ or }1}^k$ be the primary critical values of $f$. If $i\in\{1,\ldots,k\}$ is odd, then $g^{-1}([v_i,v_{i+1}])$ is connected, and one of the following occurs:
	\begin{enumerate}
		\item $f|_{f^{-1}([v_i,v_{i+1}])}$ is a non-monotone open map, and so is $g|_{g^{-1}([v_i,v_{i+1}])}$, or
		\item $g|_{g^{-1}([v_i,v_{i+1}])}$ is monotone.
	\end{enumerate}
\end{lemma}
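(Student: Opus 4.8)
The plan is to first prove that $g^{-1}([v_i,v_{i+1}])$ is connected, and then to run a dichotomy according to whether $g$ restricted to it is monotone.

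\emph{Connectedness.} Since $v_i,v_{i+1}$ are critical values of $f$ attained on $f^{-1}([v_i,v_{i+1}])=[t_i,t_{i+1}]$, there are $c_a,c_b\in C_f$ with $f(c_a)=v_i$, $f(c_b)=v_{i+1}$; they lie in $(t_i,t_{i+1})$ since exacting points are not critical and $f(t_i)=v_i\neq v_{i+1}=f(t_{i+1})$, and $c_a\neq c_b$. After relabelling so that $c_a<c_b$, surjectivity of $f$ gives $f([c_a,c_b])=[v_i,v_{i+1}]$ (the image lies in $[v_i,v_{i+1}]$ and contains both endpoints, so equality follows from the Intermediate Value Theorem). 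Hence $g^{-1}([v_i,v_{i+1}])=g^{-1}(f([c_a,c_b]))=\bigcup_{a\le j<b}g^{-1}(f([c_j,c_{j+1}]))$; each term is connected by Lemma~\ref{lem:connected}, and consecutive terms both contain the nonempty set $g^{-1}(f(c_{j+1}))$ (as $g$ is onto), so the union is connected. Write $g^{-1}([v_i,v_{i+1}])=[l,r]$, and note $g([l,r])=[v_i,v_{i+1}]$ since $g$ is onto.

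\emph{Dichotomy.} If $g|_{[l,r]}$ is monotone we are in alternative~(2) and there is nothing more to prove. Note also that $f|_{[t_i,t_{i+1}]}$ is automatically non-monotone: a monotone map taking $t_i,t_{i+1}$ to $v_i<v_{i+1}$ is injective, which contradicts $f(c_a)=v_i=f(t_i)$ with $c_a>t_i$. So assume $g|_{[l,r]}$ is non-monotone; I must then show that both $g|_{[l,r]}$ and $f|_{[t_i,t_{i+1}]}$ are open, which is alternative~(1). The key step is to show that every local extremum of $g|_{[l,r]}$ — whether an interior critical point or an endpoint of $[l,r]$ — has $g$-value in $\{v_i,v_{i+1}\}$. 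If $d\in(l,r)\cap C_g$ is, say, a local minimum with $g(d)>v_i$ (the local-maximum case is symmetric) and one of the two $g$-laps adjacent to $d$ lies inside $[l,r]$, then its $g$-image is a subinterval of $[v_i,v_{i+1}]$ missing $v_i$, hence a proper subinterval; by Lemma~\ref{oddnotcon} its $f$-preimage is disconnected, contradicting Lemma~\ref{lem:connected} with the roles of $f$ and $g$ exchanged (legitimate, since the standing hypothesis is symmetric). The only way for both adjacent laps to escape $[l,r]$ is that $d$ is the unique interior critical point with $l>0$, $r<1$; here one exploits that $g$ is monotone on $[l,d]$ and on $[d,r]$, that $g(l),g(r)\in\{v_i,v_{i+1}\}$, and Lemma~\ref{lem:g11} applied to a suitable interval of $f$. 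The endpoint cases are analogous: if, say, $l=0$ and $g(0)\in(v_i,v_{i+1})$, the $g$-lap $[0,d_1]$ has $g$-image a proper subinterval of $[v_i,v_{i+1}]$ (or escapes $[l,r]$, again handled via Lemma~\ref{lem:g11}), once more contradicting Lemma~\ref{lem:connected}. Consequently $g|_{[l,r]}\colon[l,r]\to[v_i,v_{i+1}]$ is open.

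\emph{Transferring openness to $f$.} As $g|_{[l,r]}$ is open and non-monotone, it has at least two critical points; let $d'<d''$ be the first two in $(l,r)$. Since $g|_{[l,d']}$ and $g|_{[d',d'']}$ are nonconstant and monotone, openness forces $g(l),g(d'),g(d'')\in\{v_i,v_{i+1}\}$ with $g(d')\neq g(l)$ and $g(d'')=g(l)$, so $[l,d'']$ is a two-fold with $d''\in C_g$ and $g([l,d''])=[v_i,v_{i+1}]$. Applying Lemma~\ref{lem:open} with $f$ and $g$ interchanged yields that $f|_{f^{-1}(g([l,d'']))}=f|_{f^{-1}([v_i,v_{i+1}])}=f|_{[t_i,t_{i+1}]}$ is open; a degenerate case, where $g|_{[l,r]}$ has a single critical point, is handled separately (using Proposition~\ref{twofold} when $[l,r]=I$). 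Together with the non-monotonicity of $f|_{[t_i,t_{i+1}]}$ noted above, this gives alternative~(1). Throughout, Corollary~\ref{cor:commoncritpt} ($C_f\cap C_g=\emptyset$) is used to keep the critical-point bookkeeping clean. The hard part is the key step of the previous paragraph: the clean "proper subinterval $+$ Lemma~\ref{oddnotcon} $+$ Lemma~\ref{lem:connected}" contradiction fails precisely when an adjacent $g$-lap escapes $[l,r]$ or at the endpoints $0,1$, and there one must descend into the fine structure of the primary critical values by way of Lemma~\ref{lem:g11}.
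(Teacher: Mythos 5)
Your overall toolbox is the right one (Lemma~\ref{lem:connected}, Lemma~\ref{oddnotcon}, Lemma~\ref{lem:open}), and your connectedness argument is fine --- in fact more carefully justified than the paper's one-line version, via the chain of overlapping connected sets $g^{-1}(f([c_j,c_{j+1}]))$. But you reorganize the dichotomy: the paper splits on whether $F=f|_{f^{-1}([v_i,v_{i+1}])}$ is open (if yes, Lemma~\ref{lem:open} makes $G$ open; if no, it assumes $G$ non-monotone and gets a contradiction from a lap of $g$ with image a proper subinterval of $[v_i,v_{i+1}]$ via Lemma~\ref{oddnotcon}), whereas you split on whether $G$ is monotone, prove $G$ open directly, and then transfer openness back to $f$. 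This rearrangement is where your two gaps live.

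First, the minor one: in the ``both adjacent laps escape $[l,r]$'' sub-case you defer to ``Lemma~\ref{lem:g11} applied to a suitable interval of $f$'' without giving the argument. You do not need it: you have already recorded that $g([l,r])=[v_i,v_{i+1}]$ by surjectivity, while in that sub-case $g$ is monotone on $[l,d]$ and $[d,r]$ with $g(l)=g(r)=v_{i+1}$ (say), so $g([l,r])=[g(d),v_{i+1}]\subsetneq[v_i,v_{i+1}]$ --- an immediate contradiction. As written, though, the step is asserted rather than proved. Second, and more seriously: your transfer step hinges on producing a two-fold of $g$ \emph{ending at a critical point of $g$}, which is what Lemma~\ref{lem:open} (with roles swapped) requires; you get this from the first \emph{two} critical points $d'<d''$ of $g$ in $(l,r)$. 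When $g|_{[l,r]}$ is open, non-monotone, but has only one critical point in $(l,r)$ --- i.e.\ $g|_{[l,r]}$ is itself a two-fold with $g(l)=g(r)$ and $r\notin C_g$ --- no such $[a,b]$ with $b\in C_g$ exists inside $[l,r]$, Proposition~\ref{twofold} only covers $[l,r]=I$, and you give no argument for $[l,r]\subsetneq I$. Unlike the paper's direction of applying Lemma~\ref{lem:open} (from $f$ to $g$), which is safe because $F$ always has at least two critical points (since $f(t_i)=v_i\neq v_{i+1}=f(t_{i+1})$ forces at least two folds, with interior folds alternating between the two values), nothing you have established rules out the single-fold configuration for $g$, so this case must be dispatched separately --- e.g.\ by extending the two-fold of $g$ past $r$ to the next critical point of $g$ and arguing that the resulting openness of $f$ on a larger preimage forces every critical point of $f$ in $(t_i,t_{i+1})$ to take the value $v_{i+1}$, contradicting the existence of one with value $v_i$. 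Until that is done, the proof is incomplete.
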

\begin{proof}
	First notice that $f|_{f^{-1}([v_i,v_{i+1}])}$ cannot be monotone on $[t_i,t_{i+1}]=f^{-1}([v_i,v_{i+1}])$ for $i$ odd since $[t_i,t_{i+1}]$ must contain critical points. Recall that for odd $i\in\{1,\ldots,k\}$, there are $c,c'\in C_f\cap f^{-1}([v_i,v_{i+1}])$ such that $f(c)=v_i,f(c')=v_{i+1}$. If $g^{-1}([v_i,v_{i+1}])$ is not connected, there are $c_j<c_{j+1}\in C_f$ such that $g^{-1}(f([c_j,c_{j+1}]))$ is not connected, which is in contradiction with Lemma~\ref{lem:connected}. 
	
	Denote $F=f|_{f^{-1}([v_i,v_{i+1}])}\colon f^{-1}([v_i,v_{i+1}])\to[v_i,v_{i+1}], G=g|_{g^{-1}([v_i,v_{i+1}])}\colon g^{-1}([v_i,v_{i+1}])\to[v_i,v_{i+1}]$.
	If $F$ is a non-monotone open map, then Lemma~\ref{lem:open} implies that $G$ is open too. Assume that $F$ is not open, and $G$ is not monotone. Note that Lemma~\ref{lem:open} implies that $G$ also cannot be open. Thus there is $j\in\{0,1,\ldots,m\}$ such that $g([d_j,d_{j+1}])\subsetneq[v_i,v_{i+1}]$. Then it follows from Lemma \ref{oddnotcon} that  that $f^{-1}(g([d_j,d_{j+1}]))$ is not connected, which is  a contradiction.
	\end{proof}

\begin{lemma}\label{prop:prim1} Let $\{v_i\}_{i=0\mbox{ or }1}^k$ be the primary critical points of $f$ and $\{t_i\}_{i=0\mbox{ or }1}^k$ be the associated exacting points. Suppose $i$ is even. If $p\in [t_i,t_{i+1}]$ then $f(x)<f(p)$ for all $x<p$ and $f(p)<f(x)$ for all $x>p$. Furthermore, $f|_{[t_i,t_{i+1}]}$ is monotone increasing.
\end{lemma}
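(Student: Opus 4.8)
The plan is to show that, for $i$ even, $f$ carries the ``gap'' interval $[t_i,t_{i+1}]$ strictly increasingly onto $[v_i,v_{i+1}]$ and takes no value of $(v_i,v_{i+1})$ outside $(t_i,t_{i+1})$; from this the two one-sided inequalities and the final monotonicity assertion are immediate.

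\emph{Step 1: the image.} Since $i$ is even, the flanking intervals $[t_{i-1},t_i]$ and $[t_{i+1},t_{i+2}]$ are odd, so by construction $f^{-1}([v_{i-1},v_i])=[t_{i-1},t_i]$ and $f^{-1}([v_{i+1},v_{i+2}])=[t_{i+1},t_{i+2}]$. Fix $x\in(t_i,t_{i+1})$. Because $x\notin[t_{i-1},t_i]$ we have $f(x)\notin[v_{i-1},v_i]$, so $f(x)<v_{i-1}$ or $f(x)>v_i$; the first alternative is impossible, since the Intermediate Value Theorem applied to $f$ on $[t_i,x]$ (where $f(t_i)=v_i$ and $f(x)<v_{i-1}<v_i$) would force $f$ to attain values of $(v_{i-1},v_i)$ on $[t_i,x]\subseteq[t_i,t_{i+1}]$, whereas $[t_i,t_{i+1}]\cap f^{-1}([v_{i-1},v_i])=\{t_i\}$ and $f(t_i)=v_i$. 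Hence $f(x)>v_i$, and symmetrically (using $f^{-1}([v_{i+1},v_{i+2}])=[t_{i+1},t_{i+2}]$) $f(x)<v_{i+1}$. With $f(t_i)=v_i$, $f(t_{i+1})=v_{i+1}$ and continuity this gives $f([t_i,t_{i+1}])=[v_i,v_{i+1}]$ and $f((t_i,t_{i+1}))\subseteq(v_i,v_{i+1})$. (At a boundary index one flanking odd interval is missing and is replaced by the trivial bound $0\le f$ or $f\le 1$; the rest is unchanged.)

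\emph{Steps 2--3: monotonicity and the crossing.} By the standing hypothesis recorded before the lemma, $f^{-1}((v_i,v_{i+1}))$ is an interval on which $f$ is monotone; by Step 1 it contains $(t_i,t_{i+1})$, and it cannot contain $t_i$ or $t_{i+1}$ (at which $f$ equals $v_i,v_{i+1}\notin(v_i,v_{i+1})$), so $f^{-1}((v_i,v_{i+1}))=(t_i,t_{i+1})$. Hence $f$ is strictly monotone on $[t_i,t_{i+1}]$, and strictly increasing since $f(t_i)=v_i<v_{i+1}=f(t_{i+1})$, which is the ``furthermore''. For $p\in(t_i,t_{i+1})$ set $w=f(p)$: injectivity on $(t_i,t_{i+1})$ forces $f^{-1}(w)=\{p\}$, so $f-w$ has a constant sign on each of $[0,p)$ and $(p,1]$; evaluating at $t_i<p$, where $f-w=v_i-w<0$, and at $t_{i+1}>p$, where $f-w=v_{i+1}-w>0$, gives $f(x)<f(p)$ for all $x<p$ and $f(p)<f(x)$ for all $x>p$. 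The endpoints $p=t_i,t_{i+1}$ are obtained by letting $p$ tend to them (equivalently, iterating Step 1 together with the odd-interval structure shows $f\le v_i$ on $[0,t_i)$ and $f\ge v_{i+1}$ on $(t_{i+1},1]$).

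The one delicate point is Step 1: excluding that $f$ leaves the band $[v_i,v_{i+1}]$ while crossing $[t_i,t_{i+1}]$. It uses only the exact-preimage identities $f^{-1}([v_j,v_{j+1}])=[t_j,t_{j+1}]$ for the two odd indices $j=i-1,i+1$ adjacent to $i$, together with the Intermediate Value Theorem; everything afterwards is bookkeeping, with only the boundary indices needing separate (straightforward) treatment.
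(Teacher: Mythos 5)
Your proof is correct in substance and is considerably more careful than the paper's own argument, which disposes of the lemma in two sentences by asserting that every critical point $c<p$ satisfies $f(c)\le v_i=f(t_i)\le f(p)$ (and symmetrically on the right). Your route is genuinely different in execution: instead of bounding $f$ at critical points, you use the exact preimage identities $f^{-1}([v_j,v_{j+1}])=[t_j,t_{j+1}]$ for the two adjacent odd indices together with the Intermediate Value Theorem to trap $f((t_i,t_{i+1}))$ inside $(v_i,v_{i+1})$, then identify $f^{-1}((v_i,v_{i+1}))$ with $(t_i,t_{i+1})$ and finish with a sign/connectedness argument. This buys an honest justification of the global inequalities, where the paper's one-liner leaves unexplained both why $f(t_i)\le f(p)$ is not circular and why the conclusion should be strict. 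The price is that you lean on the standing remark that $f|_{f^{-1}((v_i,v_{i+1}))}$ is monotone; note the paper asserts only the monotonicity, not that this preimage is an interval. That interval-ness is not free from monotonicity alone, but it does follow in one extra line from monotonicity, nowhere-constancy of $f$, and your Step 1 (a point of the preimage outside $[t_i,t_{i+1}]$ would force $f$ to be constant on a subinterval of $(t_i,t_{i+1})$), so this is not a real gap.

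One caveat, which is really a defect of the lemma's statement rather than of your proof: the strict inequalities fail at the endpoints $p\in\{t_i,t_{i+1}\}$. In Figure~\ref{fig:primcritval}(a) one has $f(1/11)=3/11=v_2=f(t_2)$ with $1/11<t_2$, so $f(x)<f(p)$ is false for $p=t_2$, $x=1/11$. Your limiting argument correctly yields only $f(x)\le v_i$ for $x<t_i$ (and indeed you write the non-strict bounds $f\le v_i$, $f\ge v_{i+1}$ in your parenthetical), which is all that is true; the paper's proof has the same issue, and the non-strict form is all that is used later (Lemma~\ref{prop:prim2} invokes only $f(x)\le f(p)\le f(y)$). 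Your deferral of the boundary indices is acceptable at the paper's own level of rigor, though for $i=0$ with $v_0=0$ the strictness $f(x)>0$ on $(t_0,t_1)$ additionally needs the observation that in that case $0\notin f(C_f)$, so $f^{-1}(0)$ contains no interior critical point.
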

\begin{proof} Follows from the fact that if $c$ is a critical point less than $p$  then $f(c)\leq v_i=f(t_i)\leq f(p)$. Similar, if $c$ is greater than $p$. It now follows that $f|_{[t_i,t_{i+1}]}$ is monotone increasing.
\end{proof}

\begin{lemma}\label{invariant}
	Let $\{v_i\}_{i=0\mbox{ or }1}^k$ be the primary critical values of $f$. Then $g^{-1}([v_i,v_{i+1}])=[v_i,v_{i+1}]$ for every $i\in\{0,\ldots,k\}$. 
	See Figure~\ref{fig:primcritval}.
\end{lemma}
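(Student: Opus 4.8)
The statement says precisely that each $[v_i,v_{i+1}]$ is invariant under $g$; since $g$ is onto and the intervals $[v_i,v_{i+1}]$ tile $I$, the inclusions $g([v_i,v_{i+1}])\subseteq[v_i,v_{i+1}]$ for all $i$ are equivalent to the asserted equalities of preimages. The plan is to establish these by an induction that peels $I$ off from one end: assuming $g^{-1}([0,v_i])=[0,v_i]$ --- so that $g(x)\le v_i$ exactly when $x\le v_i$, and in particular $g(v_i)\le v_i$ --- I show $g^{-1}([v_i,v_{i+1}])=[v_i,v_{i+1}]$, which advances the hypothesis to $i+1$; the base step is the leftmost piece, whose left endpoint is $0$ itself.

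For odd $i$, Lemma~\ref{monotonepatr} already gives that $g^{-1}([v_i,v_{i+1}])$ is a non-degenerate interval $[l,r]$ with $g([l,r])=[v_i,v_{i+1}]$, so only the endpoints need to be located. If $l<v_i$ then $[l,v_i)\subseteq g^{-1}([v_i,v_{i+1}])\cap g^{-1}([0,v_i])\subseteq g^{-1}(\{v_i\})$, so $g$ is constant on a non-degenerate interval, contradicting that $g$ is nowhere constant. If $l>v_i$ then every $x\in(v_i,l)$ has $g(x)\notin[v_i,v_{i+1}]$ and, by the induction hypothesis, $g(x)>v_i$, hence $g(x)>v_{i+1}$; but $g(v_i)\le v_i$, which is incompatible with continuity of $g$ at $v_i$. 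Hence $l=v_i$ (and then $v_i\in g^{-1}([v_i,v_{i+1}])$ forces $g(v_i)=v_i$), and symmetrically $r=v_{i+1}$.

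For even $i$ the interval $[t_i,t_{i+1}]=f^{-1}([v_i,v_{i+1}])$ is connected, $f$ is monotone on it, and $f([t_i,t_{i+1}])=[v_i,v_{i+1}]$ because $f$ is onto. Applying $f\circ g^{-1}=g^{-1}\circ f$ at each point and taking unions gives the set identity $g^{-1}(f(A))=f(g^{-1}(A))$ for every $A\subseteq I$; with $A=[t_i,t_{i+1}]$,
\[
g^{-1}([v_i,v_{i+1}])=g^{-1}\bigl(f([t_i,t_{i+1}])\bigr)=f\bigl(g^{-1}([t_i,t_{i+1}])\bigr).
\]
So it suffices to know that $g^{-1}([t_i,t_{i+1}])$ is connected: then $g^{-1}([v_i,v_{i+1}])$ is its continuous image, a non-degenerate interval carried onto $[v_i,v_{i+1}]$ by $g$, and the ``constant versus jump'' endpoint argument of the odd case again pins it to $[v_i,v_{i+1}]$. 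Connectedness of $g^{-1}([t_i,t_{i+1}])$ I would extract from Lemma~\ref{lem:connected} --- $[t_i,t_{i+1}]$ lies inside a single interval $[c_a,c_{a+1}]$ between consecutive critical points of $f$ --- together with the sharpness of the exacting points in Lemma~\ref{prop:prim1}, which rules out a second component of the preimage appearing. As a cross-check, once all odd pieces are settled the even ones also follow directly: if $x$ lies in an even piece then $g(x)$ lies in no odd piece, so the connected set $g([v_i,v_{i+1}])$ lies in a single even piece $[v_j,v_{j+1}]$, and evaluating at $v_i$ and $v_{i+1}$, which belong to the already-treated odd pieces on either side, forces $j=i$.

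The hard part is the even-index case, and with it the coupling between the two ends of each preimage interval: Lemma~\ref{monotonepatr} does not apply there, and $[t_i,t_{i+1}]$ is typically a proper subinterval of one of the intervals $[c_a,c_{a+1}]$ rather than a full one, so the connectedness of $g^{-1}([v_i,v_{i+1}])$ --- which is also what keeps the peeling induction from being circular, by supplying the right endpoint of each interval as one moves left to right --- must be manufactured from the combination of Lemma~\ref{lem:connected}, the sharpness statement of Lemma~\ref{prop:prim1}, and the strong-commuting identity $f\circ g^{-1}=g^{-1}\circ f$. The base cases at the ends $0$ and $1$ of $I$, where $g(0)$ and $g(1)$ are not a priori known to be primary critical values, rely on the same package.
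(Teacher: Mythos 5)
Your left-endpoint argument is correct, but the proof has a genuine gap at the step ``and symmetrically $r=v_{i+1}$.'' The situation is not symmetric: the argument for $l=v_i$ uses the induction hypothesis $g^{-1}([0,v_i])=[0,v_i]$, which gives information only to the \emph{left} of the interval (namely $g(x)\le v_i$ iff $x\le v_i$, hence $g(v_i)\le v_i$), and you have no corresponding information about $[v_{i+1},1]$ at that stage --- indeed $g(v_{i+1})\le v_{i+1}$ is precisely what the inductive step is supposed to produce. Concretely, after your (valid) left-endpoint step you know $g^{-1}([v_i,v_{i+1}])=[v_i,r]$, $g(v_i)=v_i$, and $g(x)>v_{i+1}$ for all $x>r$; nothing in this local data rules out $r<v_{i+1}$ with $g$ carrying $[r,1]$ onto $[v_{i+1},1]$, nor $r>v_{i+1}$. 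Ruling these out requires coupling the two ends of every preimage interval simultaneously, which is exactly what the paper's proof does globally: it parametrizes $\Gamma(g^{-1}\circ f)=\Gamma(f\circ g^{-1})$ by $\alpha(t)=(g(t),f(t))$, uses Lemma~\ref{oddnotcon} to see that $|f^{-1}(f(\tau))|=1$ precisely on the even pieces up to a finite set, matches $\{\tau:|f^{-1}(f(\tau))|=1\}$ with $\{t:|f^{-1}(f(t))|=1\}$ to conclude $\tau_i=t_i$, and reads off $s_i=f(\tau_i)=f(t_i)=v_i$. You have no substitute for this mechanism; you correctly identify ``the coupling between the two ends'' as the hard part, but then leave it unproved in the odd case as well as the even one.

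Two further unfilled steps. First, the base case needs $g^{-1}(\{0\})=\{0\}$ (equivalently $g(0)=0$ and $g(x)>0$ for $x>0$); this is not a priori known and is not supplied by any cited lemma --- commutativity only gives that $g(0)$ is a fixed point of $f$. Second, in the even case you invoke Lemma~\ref{lem:connected} for connectedness of $g^{-1}([t_i,t_{i+1}])$, but that lemma concerns sets of the form $g^{-1}(f([c_a,c_{a+1}]))$ for consecutive critical points of $f$, whereas $[t_i,t_{i+1}]$ is a subinterval of the \emph{domain}, generally a proper subinterval of some $[c_a,c_{a+1}]$ and not of the form $f([c_a,c_{a+1}])$, so the lemma does not apply. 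Both of these, like the right-endpoint problem, are resolved in the paper by the single global identification $\tau_i=t_i$, which determines all the $s_i$ at once rather than one end at a time.
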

\begin{proof}
	We denote $f^{-1}([v_i,v_{i+1}])=[t_i,t_{i+1}]$, and $g^{-1}([v_i,v_{i+1}])=[s_i,s_{i+1}]$, and note $t_i,t_{i+1}\not\in C_f, s_i,s_{i+1}\not\in C_g$. 
	
	By Lemma~\ref{oddnotcon}, for every odd $i\in\{1,\ldots,k\}$, and every $x\in [v_i,v_{i+1}]\setminus\{0,1\}$, the set $f^{-1}(x)$ consists of at least two points. If $i\in\{0,\ldots,k\}$ is even, then $f|_{[t_i,t_{i+1}]}$ is monotone, so $f^{-1}(x)$ is a single point for every $x\in (t_i,t_{i+1})$. If $0\in f(C_f)$, then $f^{-1}(0)$ can be a single point if and only if $f(c)=0$ for a unique $c\in C_f$, and similarly for $f^{-1}(1)$.
	
	So if $x\in[s_i,s_{i+1}]$ for odd $i\in\{1,\ldots,k\}$, and $g(x)\not\in\{0,1\}$, then $|f^{-1}(g(x))|\geq 2$. If $x\in(s_i,s_{i+1})$ for even $i\in\{0,\ldots,k\}$, then $|f^{-1}(g(x))|=1$.
	
	Note that $(t_i,s_i)\in\Gamma(g^{-1}\circ f)=\Gamma(f\circ g^{-1})$ for every $i\in\{0,\ldots,k\}$, and let $\tau_i$ be such that $(t_i,s_i)=(g(\tau_i),f(\tau_i))$. Since $t_i\not\in C_f$, Lemma~\ref{lem:cross} implies that such $\tau_i$ is unique for every $i\in\{0,\ldots,k\}$. Since $f|_{[t_i,t_{i+1}]}$ is monotone for even $i\in\{0,\ldots,k\}$, it follows that for every point $(x,y)\in\Gamma(g^{-1}\circ f)$ such that $x\in [t_i,t_{i+1}]$ there is a unique $t\in [\tau_i,\tau_{i+1}]$ such that $(x,y)=(g(t),f(t))$. It follows that $f|_{[\tau_i,\tau_{i+1}]}$ is monotone for every even $i\in\{0,\ldots,k\}$.  
	
	Furthermore, let $(x,y)\in\Gamma(g^{-1}\circ f)$ be such that $|f^{-1}(g(y))|\geq 2$. Then for $\tau\in I$ such that $(x,y)=(g(\tau),f(\tau))$ it holds that $|f^{-1}(f(\tau))|\geq 2$. Note that if $(x,y)\in\Gamma(g^{-1}\circ f)$ is such that $|f^{-1}(g(y))|=1$, and $(x,y)=(g(\tau),f(\tau))$ for $\tau\in[\tau_i,\tau_{i+1}]$, where $i$ is odd, then $g(y)\in\{0,1\}$. In particular, the set of all $\tau\in[\tau_i,\tau_{i+1}]$, $i$ odd, for which $f^{-1}(f(\tau))$ is a singleton is finite.
	
	We conclude that $\{\tau: |f^{-1}(f(\tau))|=1\}=\cup_{i \text{ even}}(\tau_i,\tau_{i+1})\cup S$, where $S$ is a finite set. On the other hand, we know that $\{t: |f^{-1}(f(t))|=1\}=\cup_{i \text{ even}}(t_i,t_{i+1})\cup S'$, where $S'\subset\{0,1\}$. Thus it follows that $\tau_i=t_i$ for every $i\in\{0,\ldots,k\}$. Moreover, $f(t_i)=f(\tau_i)=s_i$, and since by definition $f(t_i)=v_i$, it follows that $s_i=v_i$ for every $i\in\{0,\ldots,k\}$, and thus $g^{-1}([v_i,v_{i+1}])=[s_i,s_{i+1}]=[v_i,v_{i+1}]$ for every $i\in\{0,\ldots,k\}$.
\end{proof}

\begin{lemma}\label{prop:prim2} Let $\{v_i\}_{i=0\mbox{ or }1}^k$ be the primary critical points of $f$ and $\{t_i\}_{i=0\mbox{ or }1}^k$ be the associated exacting points.  Suppose $i$ is even. If $p\in [t_i,t_{i+1}]$ and $f(p)=p$ then $f([0,p])=[0,p]$ and $f([p,1])=[p,1]$.
\end{lemma}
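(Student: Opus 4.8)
The plan is to extract the two one-sided inclusions $f([0,p])\subseteq[0,p]$ and $f([p,1])\subseteq[p,1]$ directly from Lemma~\ref{prop:prim1}, and then recover the reverse inclusions from the standing assumption that $f$ is onto.

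First I would invoke Lemma~\ref{prop:prim1} for the given even index $i$ and the point $p\in[t_i,t_{i+1}]$: it gives $f(x)<f(p)$ for every $x<p$ and $f(p)<f(x)$ for every $x>p$. Substituting the hypothesis $f(p)=p$ and using that $f$ takes values in $[0,1]$, this says $f(x)\in[0,p)$ for all $x\in[0,p)$ and $f(x)\in(p,1]$ for all $x\in(p,1]$; adjoining the point $p$ itself (where $f(p)=p$), we get $f([0,p])\subseteq[0,p]$ and $f([p,1])\subseteq[p,1]$.

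For the reverse inclusions I would use that $f$ is surjective, so $[0,1]=f([0,1])=f([0,p])\cup f([p,1])$. If $y\in[0,p)$, then $y\notin[p,1]\supseteq f([p,1])$, hence $y\in f([0,p])$; thus $[0,p)\subseteq f([0,p])$, and since $f([0,p])$ is a continuous image of a compact set it is closed, so $[0,p]\subseteq f([0,p])$. Combined with the previous paragraph this yields $f([0,p])=[0,p]$. Symmetrically, any $y\in(p,1]$ lies outside $[0,p]\supseteq f([0,p])$, hence in $f([p,1])$, giving $(p,1]\subseteq f([p,1])$ and therefore $f([p,1])=[p,1]$.

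I do not expect a genuine obstacle here: the geometric content is already packaged in Lemma~\ref{prop:prim1}, and what remains is the bookkeeping above together with the observations that $f$ is onto and that continuous images of compact sets are closed. (One cannot simply apply the intermediate value theorem to $f|_{[0,p]}$, since we do not know $f(0)=0$; the surjectivity argument is what makes the reverse inclusion go through.)
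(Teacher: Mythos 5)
Your proof is correct and follows essentially the same route as the paper: apply Lemma~\ref{prop:prim1} together with $f(p)=p$ to get the forward inclusions $f([0,p])\subseteq[0,p]$ and $f([p,1])\subseteq[p,1]$, then use surjectivity of $f$ to upgrade them to equalities. You simply spell out the surjectivity step in more detail than the paper does.
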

\begin{proof} Let $x\in [0,p]$ and $y\in [p,1]$. Then it follows from Lemma \ref{prop:prim1} that $f(x)\leq f(p)=p=f(p)\leq f(y)$. Now the result follows since $f$ is onto.
\end{proof}

\begin{proposition}\label{theor:prim3} Let $f:[a,b]\longrightarrow [c,d]$ be an order preserving homeomorphism where $[c,d]\subset[a,b]$. Then for every $x\in [a,b]$, the limit of $\{f^n(x)\}_{n=0}^{\infty}$ exists and is a fixed point of $f$.
\end{proposition}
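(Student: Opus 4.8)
The plan is to run the classical monotone--convergence argument for increasing self--maps of an interval. First I would note that the hypothesis $[c,d]\subseteq[a,b]$ means $f$ actually maps $[a,b]$ into itself, so every iterate $f^n$ is a well--defined continuous order preserving map $[a,b]\to[a,b]$; in particular the sequence $\{f^n(x)\}_{n=0}^{\infty}$ makes sense for each $x\in[a,b]$. Fix such an $x$. If $f(x)=x$ there is nothing to prove, so assume $f(x)\neq x$.

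Next, suppose $f(x)>x$. Applying the order preserving map $f$ to this inequality gives $f^{2}(x)>f(x)$, and inductively $f^{n+1}(x)>f^{n}(x)$ for all $n\ge 0$; thus $\{f^n(x)\}$ is strictly increasing. It is bounded above (by $b$, in fact by $d$), so it converges to some $L\in[a,b]$. By continuity of $f$ we get $f(L)=f\bigl(\lim_n f^n(x)\bigr)=\lim_n f^{n+1}(x)=L$, so $L=\lim_n f^n(x)$ is a fixed point of $f$. The case $f(x)<x$ is entirely symmetric: here $\{f^n(x)\}$ is strictly decreasing and bounded below by $a$, hence converges, and its limit is again a fixed point of $f$ by continuity.

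There is essentially no obstacle here. The only point that needs a word is that the iterates remain in the domain, which is exactly what $[c,d]\subseteq[a,b]$ provides; after that the argument reduces to the elementary facts that a bounded monotone real sequence converges and that the limit of $f$ along such a sequence equals $f$ of the limit. In particular one does not even use that $f$ is a homeomorphism --- continuity together with monotonicity suffices --- though the statement is phrased this way because it will be applied in that setting.
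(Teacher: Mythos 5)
Your proof is correct and is exactly the argument the paper intends: the paper's own proof simply says ``this follows from the Monotone Convergence Theorem,'' and you have spelled out that argument (monotone bounded orbit, continuity at the limit) in full. No differences worth noting.
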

\begin{proof} This follows from the Monotone Convergence Theorem.
	\end{proof}

\begin{lemma}\label{lemma:prim4}  Let $x\leq u\leq y\leq v$ and $f:[x,v]\longrightarrow [u,v]$, $g:[u,v]\longrightarrow [u,y]$ be order preserving homeomorphisms such that $f\circ g|_{[u,v]}=g\circ f|_{[u,v]}$. Then there exist $p\in[u,v]$ such that $f(p)=p$ and $g(p)=p$.
\end{lemma}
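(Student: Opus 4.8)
The plan is to pin down the common fixed point inside $\Fix(g):=\{t\in[u,v]:g(t)=t\}$ and then use Proposition~\ref{theor:prim3} to promote it to a fixed point of $f$ as well. First I would record two elementary facts about $\Fix(g)$. Since $g\colon[u,v]\to[u,y]$ is an order preserving homeomorphism, it sends the left endpoint of its domain to the left endpoint of its image, so $g(u)=u$; in particular $\Fix(g)\neq\emptyset$. Moreover, being the zero set of the continuous function $t\mapsto g(t)-t$ on $[u,v]$, the set $\Fix(g)$ is closed.

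Next I would show that $f$ maps $\Fix(g)$ into itself. Let $b\in\Fix(g)$. From $x\le u\le y\le v$ we have the chain of containments $\Fix(g)\subseteq[u,v]\subseteq[x,v]$, so $f(b)$ is defined, and $f(b)\in f([x,v])=[u,v]$, which is exactly the domain of $g$. Evaluating the hypothesis $f\circ g|_{[u,v]}=g\circ f|_{[u,v]}$ at the point $b$ then yields $f(b)=f(g(b))=g(f(b))$, i.e. $f(b)\in\Fix(g)$. Consequently, since $u\in\Fix(g)$, every forward iterate $f^{n}(u)$ lies in $\Fix(g)$, and also in $[u,v]$ (because $f([x,v])=[u,v]$, so $f^{n}(u)\in[u,v]$ for all $n\ge 0$).

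Finally I would invoke Proposition~\ref{theor:prim3} applied to $f\colon[x,v]\to[u,v]$, which is an order preserving homeomorphism with $[u,v]\subseteq[x,v]$: the sequence $\{f^{n}(u)\}_{n\ge 0}$ converges to a point $p\in[u,v]$ with $f(p)=p$. Since each term $f^{n}(u)$ lies in the closed set $\Fix(g)$, so does the limit $p$; hence $g(p)=p$ as well, and $p\in[u,v]$ is the desired common fixed point of $f$ and $g$.

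The argument is short, and the only step requiring attention is the bookkeeping of domains: one must verify that every composition written down is legitimate, which is precisely what forces the explicit use of $[u,y]\subseteq[u,v]\subseteq[x,v]$ and of $f([x,v])=[u,v]$, and one must make sure the commuting identity is applied only at points of $[u,v]$, where it is assumed to hold. No genuine obstacle appears: monotonicity guarantees convergence of the iteration via Proposition~\ref{theor:prim3}, and the closedness of $\Fix(g)$ transports the fixed point of the $f$-iteration to $g$.
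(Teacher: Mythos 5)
Your proposal is correct and follows essentially the same route as the paper: both arguments start from $g(u)=u$, iterate $f$ on $u$, use the commuting identity to keep the iterates inside the fixed-point set of $g$, and pass to the limit via Proposition~\ref{theor:prim3}. Your phrasing in terms of the closed, $f$-invariant set $\Fix(g)$ is just a repackaging of the paper's direct computation $g(p)=\lim_n g(f^n(u))=\lim_n f^n(u)=p$.
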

\begin{proof}
	By Proposition \ref{theor:prim3}, there exists $p=\lim_{n\rightarrow \infty}f^n(u)$ such that $f(p)=p$. Since $g(u)=u$, it follows that $g(p)=\lim_{n\rightarrow \infty}g(f^n(u))=\lim_{n\rightarrow \infty}f^n(g(u))=\lim_{n\rightarrow \infty}f^n(u)=p$.
\end{proof}

\begin{lemma}\label{lemma:prim5}  Let $ u< y\leq v$,  $ u< x<v$ and $f:[u,v]\longrightarrow [x,v]$, $g:[u,v]\longrightarrow [u,y]$ be order preserving homeomorphisms such that $f\circ g=g\circ f$. Then there exist $p\in[u,v]$ such that $f(p)=p$ and $g(p)=p$.
\end{lemma}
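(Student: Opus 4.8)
The plan is to copy the strategy of Lemma~\ref{lemma:prim4}: iterate $f$ toward one of its fixed points, and then use that $g$ fixes the left endpoint $u$.

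First I would record the boundary behaviour forced by order preservation and surjectivity. Since $f\colon[u,v]\to[x,v]$ is an order preserving homeomorphism onto $[x,v]$, we have $f(u)=x$ and $f(v)=v$; in particular $f([u,v])=[x,v]\subseteq[u,v]$ (this is where $u<x$ enters), so every iterate $f^n$ is a well-defined order preserving self-map of $[u,v]$. Likewise $g\colon[u,v]\to[u,y]$ gives $g(u)=u$ and $g(v)=y$. Because $[x,v]\subseteq[u,v]$, Proposition~\ref{theor:prim3} applies to $f\colon[u,v]\to[x,v]$, so for every $w\in[u,v]$ the sequence $\{f^n(w)\}_{n=0}^{\infty}$ converges to a fixed point of $f$.

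Next, set $p:=\lim_{n\to\infty}f^n(u)$; by the previous paragraph this limit exists, lies in $[u,v]$, and satisfies $f(p)=p$. From $f\circ g=g\circ f$ on $[u,v]$ one obtains $g\circ f^n=f^n\circ g$ for every $n\ge 1$ by an easy induction (all the compositions being self-maps of $[u,v]$), and hence, using continuity of $g$,
$$
g(p)=g\Big(\lim_{n\to\infty}f^n(u)\Big)=\lim_{n\to\infty}g\big(f^n(u)\big)=\lim_{n\to\infty}f^n\big(g(u)\big)=\lim_{n\to\infty}f^n(u)=p .
$$
Thus $p\in[u,v]$ is a common fixed point of $f$ and $g$.

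I do not expect any real obstacle here. The only points needing care are checking that the hypotheses of Proposition~\ref{theor:prim3} are met (which reduces to $u<x$, i.e.\ $[x,v]\subseteq[u,v]$) and that $g(u)=u$, which is immediate since $g$ is an order preserving homeomorphism onto $[u,y]$. The difference from Lemma~\ref{lemma:prim4} is purely a matter of which interval plays the role of the shrinking target, so the argument transfers essentially \emph{verbatim} once the boundary values are pinned down.
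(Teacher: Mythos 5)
Your proof is correct and follows exactly the route the paper intends: the paper's own proof of Lemma~\ref{lemma:prim5} simply states that it is identical to that of Lemma~\ref{lemma:prim4}, i.e.\ apply Proposition~\ref{theor:prim3} to get $p=\lim_n f^n(u)$ with $f(p)=p$ and then use $g(u)=u$ together with commutativity to conclude $g(p)=p$. Your careful verification that $f([u,v])=[x,v]\subseteq[u,v]$ and $g(u)=u$ is exactly the bookkeeping needed to see why the argument transfers.
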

\begin{proof}
	Proof is identical to Lemma \ref{lemma:prim4}.
\end{proof}

\begin{lemma}\label{lemma:prim6}
	If $f$ and $g$ are not open, then there exists a fixed point $p$ such that $[a,p]$ and $[p,b]$ are invariant under both $f$ and $g$.
\end{lemma}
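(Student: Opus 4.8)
The plan is to route the common-fixed-point question through the nested order-preserving homeomorphism lemmas, Lemma~\ref{lemma:prim4} and Lemma~\ref{lemma:prim5}, using the invariant-interval decomposition of Lemma~\ref{invariant} as the connecting device; recall that throughout this section $f$ and $g$ are order preserving on their exacting points. First I would produce monotone blocks from the non-openness hypothesis. Since $f$ is not open it has a critical value in $(0,1)$, hence a primary critical value in $(0,1)$; on every even-index block $f$ is a monotone increasing homeomorphism (Lemma~\ref{prop:prim1}), hence open there, so there is an odd-index block $[v_i,v_{i+1}]$ on which $f|_{f^{-1}([v_i,v_{i+1}])}$ is not open, and Lemma~\ref{monotonepatr}(2) then forces $g|_{[v_i,v_{i+1}]}$ to be monotone; being onto and order preserving it is an order-preserving self-homeomorphism of $[v_i,v_{i+1}]$. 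Interchanging $f$ and $g$ (legitimate since $f\circ g^{-1}=g^{-1}\circ f$ is equivalent to $g\circ f^{-1}=f^{-1}\circ g$), non-openness of $g$ likewise produces an odd-index block $[w_j,w_{j+1}]$ of $g$ on which $f$ is an order-preserving self-homeomorphism.

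Next I would set up the nesting. By Lemma~\ref{invariant} every $f$-block $[v_p,v_{p+1}]$ is $g$-invariant and every $g$-block $[w_q,w_{q+1}]$ is $f$-invariant, and, as extracted inside the proof of Lemma~\ref{invariant}, each exacting point $t_p$ of $f$ is a fixed point of $g$ (and symmetrically). I would then locate a single interval $[x,v]$ — the lowest block on which neither map has an interior fold, so that the monotone corridor structure is available from both sides — on which, after replacing $f$ or $g$ by its square on one side if needed, the restrictions assume the nested shape $x\le u\le y\le v$, $f\colon[x,v]\to[u,v]$, $g\colon[u,v]\to[u,y]$ demanded by Lemma~\ref{lemma:prim4} (or the strict variant Lemma~\ref{lemma:prim5}): the two maps are order compatible because they are order preserving on exacting points, and the required containments of images follow from the fixed-exacting-point relations together with Lemma~\ref{invariant}. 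Lemma~\ref{lemma:prim4} (or Lemma~\ref{lemma:prim5}) then yields a point $p\in[x,v]$ with $f(p)=g(p)=p$.

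Finally I would upgrade this common fixed point to an invariant splitting. The point $p$ lies in the interior of a monotone (even-index) block $[t_i,t_{i+1}]$ of $f$ and, symmetrically, in an analogous block of $g$, so Lemma~\ref{prop:prim2} gives $f([0,p])=[0,p]$ and $f([p,1])=[p,1]$, while its $f\leftrightarrow g$ counterpart gives $g([0,p])=[0,p]$ and $g([p,1])=[p,1]$, which is precisely the claim (with $a=0$, $b=1$).

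The step I expect to be the main obstacle is the middle one: the primary-critical-value partitions of $f$ and of $g$ need not share a breakpoint, so one has to track carefully how the two partitions interleave on $I$ and then check that the precise nesting hypotheses of Lemma~\ref{lemma:prim4}/Lemma~\ref{lemma:prim5} really hold on the block one selects, so that Proposition~\ref{theor:prim3} can be applied there. This is also the only place where the hypothesis that $f$ and $g$ are not open is used in an essential way: if both maps were open, every block could be a full fold and there would be no order-preserving homeomorphism available to drive the monotone-convergence argument.
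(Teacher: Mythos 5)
Your overall architecture coincides with the paper's: primary critical values, Lemma~\ref{invariant} to tie the two partitions together, Lemmas~\ref{lemma:prim4}/\ref{lemma:prim5} to manufacture a common fixed point $p$ inside an overlap of monotone blocks, and Lemma~\ref{prop:prim2} to upgrade $p$ to an invariant splitting. But the step you yourself flag as ``the main obstacle'' is exactly the content of the paper's proof, and you leave it unresolved; worse, the one concrete device you offer for it is the wrong one. The paper does \emph{not} replace $f$ or $g$ by its square anywhere in this lemma (squaring is reserved for the order-reversing cases in Lemmas~\ref{lem:subdiv} and \ref{lem:subdiv2}). Instead, after reducing to the interleaving $0=v_0\le u_1<u_2<v_1<u_3$ (or a shared interior value $v_i=u_j$, a case your sketch omits entirely), it runs a four-case analysis on the relative positions of $s_2$ versus $u_2$ and $t_1$ versus $v_1$, and in each case replaces $f$ and/or $g$ by its \emph{inverse} on the monotone block $[u_2,v_1]$ so that the resulting pair of order-preserving homeomorphisms satisfies the nesting hypotheses of Lemma~\ref{lemma:prim4} or \ref{lemma:prim5}. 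Without some such mechanism there is no reason the ``nested shape $x\le u\le y\le v$'' should hold on the block you select, so your argument does not go through as written.

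A second, smaller error: you assert that ``each exacting point $t_p$ of $f$ is a fixed point of $g$.'' What the proof of Lemma~\ref{invariant} actually yields is $s_i=v_i$ together with $g(s_i)=f(t_i)=v_i$, i.e.\ the \emph{primary critical values} $v_i$ of $f$ are fixed points of $g$ (and symmetrically $f(u_j)=u_j$); the exacting points $t_i$ need not be fixed by $g$. The paper's Case~2 uses precisely $f(u_1)=u_1$, $f(u_2)=u_2$, $g(v_1)=v_1$ to anchor the nesting, so this substitution matters for setting up Lemmas~\ref{lemma:prim4}/\ref{lemma:prim5} correctly.
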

\begin{proof}
	Let $\{v_i\}_{i=0,1}^k$ be the primary critical values and $\{t_i\}_{i=0,1}^k$ be the associated exacting points of $f$ and  $\{u_i\}_{i=0,1}^l$ be the primary critical values and $\{s_i\}_{i=0,1}^l$ be the associated exacting points of $g$. 
	
	{\bf Case 1.} There exists  $v_i, u_j\not\in \{0,1\}$ such that  $v_i= u_j$. 
	
	Then it follows from the Lemma~\ref{invariant} that $g(v_i)=v_i=u_j=f(u_j)$. It now follows from Lemma \ref{prop:prim2} that $[0,v_i]$ and $[v_i,1]$ are invariant under both $f$ and $g$.
	
	{\bf Case 2.} If  $v_i, u_j\not\in \{0,1\}$, then  $v_i\not= u_j$. 
	
	Without loss of generality, assume $0\leq u_1<v_1$. Then it follows from Lemma~\ref{monotonepatr} and definition of $v_0$ that $0=v_0\leq u_1<u_2<v_1<u_3$. It also follows from Lemma~\ref{prop:prim1} that whenever $f([a,b])\subset [v_0,v_1]$ and $g([c,d])\subset [u_2,u_3]$, then $f|_{[a,b]}$ and $g|_{[c,d]}$ are both monotone increasing and hence, order preserving homeomorphisms. Note that since $0$ is not a critical value of $f$, $f(0)=0$, and it follows from Lemma~\ref{invariant} that $f(u_1)=u_1$, $f(u_2)=u_2$ and $g(v_1)=v_1$. Furthermore, $f_{[0,t_1]}$ and $g|_{[s_2,s_3]}$ are order preserving homeomorphisms for the same reason.
	
	{\bf Claim:} There exists a $p\in [v_0,v_1]\cap [u_2,u_3]$ such that $f(p)=p=g(p)$.
	
	{\bf Case A.} $s_2\leq u_2$ and $v_1<t_1$.
	
	Then $f(v_1)<f(t_1)=v_1$. So $f|_{[u_2,v_1]}:[u_2,v_1]\longrightarrow [u_2,f(v_1)]$ and $g|_{[s_2,v_1]}:[s_2,v_1]\longrightarrow [u_2,v_1]$ are order preserving homeomorphisms such that $f\circ g|_{[u_2,v_1]}=g\circ f|_{[u_2,v_1]}.$
	
	{\bf Case B.} $s_2\leq u_2$ and $t_1\leq v_1$.
	
	So $f^{-1}|_{[u_2,v_1]}:[u_2,v_1]\longrightarrow [u_2,t_1]$ and $g|_{[s_2,v_1]}:[s_2,v_1]\longrightarrow [u_2,v_1]$ are order preserving homeomorphisms such that $f^{-1}\circ g|_{[u_2,v_1]}=g\circ f^{-1}|_{[u_2,v_1]}.$
	
	{\bf Case C.} $u_2< s_2$ and $v_1<t_1$.
	
	Then $f(v_1)<v_1$ and $g(u_2)\leq g(s_2)=u_2$. So $f|_{[u_2,v_1]}:[u_2,v_1]\longrightarrow [u_2,f(v_1)]$ and $g^{-1}|_{[u_2,v_1]}:[u_2,v_1]\longrightarrow [s_2,v_1]$ are order preserving homeomorphisms such that $f\circ g^{-1}|_{[u_2,v_1]}=g^{-1}\circ f|_{[u_2,v_1]}.$
	
	{\bf Case D.} $u_2< s_2$ and $t_1\leq v_1$.
	
	Then  $g(u_2)<g(s_2)=u_2$. So $f^{-1}|_{[u_2,v_1]}:[u_2,v_1]\longrightarrow [u_2,t_1]$ and $g^{-1}|_{[u_2,v_1]}:[u_2,v_1]\longrightarrow [s_2,v_1]$ are order preserving homeomorphisms such that $f^{-1}\circ g^{-1}|_{[u_2,v_1]}=g^{-1}\circ f^{-1}|_{[u_2,v_1]}$.

	It now follows from Lemmas~\ref{lemma:prim4} and \ref{lemma:prim5} that there exists a $p\in [v_0,v_1]\cap [u_2,u_3]$ such that $f(p)=p=g(p)$. So the result follows again from Lemma~\ref{prop:prim2}.
\end{proof}

\begin{lemma}\label{lem:main_increasing}
	Suppose that $f\colon I\to I$ and $g\colon I\to I$ are piecewise monotone onto maps such that $f^{-1}\circ g=g\circ f^{-1}$ and both $f$ and $g$ are orientation preserving on their exacting points. Then there exists $0=p_0<p_1<\ldots<p_n=1$ such that 
	$[p_i,p_{i+1}]$ is invariant under both $f$ and $g$ and at least one of $\{f|_{[p_i,p_{i+1}]},g|_{[p_i,p_{i+1}]}\}$ is open for each $i$.
\end{lemma}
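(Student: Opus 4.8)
The plan is to argue by induction on $N=|C_f|+|C_g|$, at each stage using Lemma~\ref{lemma:prim6} to cut off a subinterval invariant under both $f$ and $g$ and then recurse. For the base case, note that whenever $f$ or $g$ is open — in particular when $N=0$, since an orientation preserving monotone surjection of $I$ is an increasing homeomorphism — the trivial partition $p_0=0<p_1=1$ suffices: $I$ is invariant under both maps and at least one of them is open on $I$.

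For the inductive step assume that neither $f$ nor $g$ is open. Lemma~\ref{lemma:prim6} then furnishes a common fixed point $p\in(0,1)$ with $[0,p]$ and $[p,1]$ invariant under both $f$ and $g$; as in the proof of that lemma, Lemma~\ref{prop:prim2} upgrades invariance to $f([0,p])=[0,p]=g([0,p])$ and $f([p,1])=[p,1]=g([p,1])$, so $f|_{[0,p]},g|_{[0,p]}$ and $f|_{[p,1]},g|_{[p,1]}$ are piecewise monotone onto self-maps of the respective subintervals. Before invoking the induction hypothesis one must verify that these restrictions again satisfy the running hypotheses. Strong commutativity is inherited: for $x\in[0,p]$ with $g(x)<p$ one has $f^{-1}(g(x))\subseteq[0,p)$ and $f^{-1}(x)\subseteq[0,p)$ (because $f([p,1])=[p,1]$), so $(f|_{[0,p]})^{-1}(g(x))=f^{-1}(g(x))=g(f^{-1}(x))=g|_{[0,p]}\!\left((f|_{[0,p]})^{-1}(x)\right)$; this equality holds off the finite set $\{x\in[0,p]:g(x)=p\}$, hence on all of $[0,p]$, since the graphs of the two set-valued compositions are closed and coincide with the closures of their restrictions to a dense set (the argument used in the proof of Proposition~\ref{prop:mnrelprime}; cf. Remark~\ref{rem:graphconnected}). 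Orientation preservation on exacting points passes to the restrictions because the exacting points of $f|_{[0,p]}$ are exactly those of $f$ lying in $[0,p]$, which $f$ still orders correctly. Applying the induction hypothesis to $[0,p]$ and to $[p,1]$ and concatenating the resulting partitions yields a partition of $I$ with the stated properties.

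It remains to see that the induction genuinely descends, i.e.\ that $|C_f\cap(0,p)|+|C_g\cap(0,p)|<N$ and likewise on $[p,1]$. If $p\in C_f\cup C_g$ this is immediate; if $p\notin C_f\cup C_g$ it amounts to showing that each of $(0,p)$ and $(p,1)$ contains a critical point of $f$ or of $g$, and this is exactly where the hypothesis that neither $f$ nor $g$ is open is used: by Proposition~\ref{twofold} and Lemma~\ref{monotonepatr}, a non-open strongly commuting piecewise monotone map cannot be a full fold, so each of $f,g$ has a primary critical value strictly inside $(0,1)$, and locating the common fixed point $p$ produced by Lemma~\ref{lemma:prim6} among these primary critical values — as is done inside the proof of that lemma — shows that $[0,p]$ and $[p,1]$ each meet $C_f\cup C_g$. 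I expect this descent argument, together with the (more routine) checks that strong commutativity and the orientation condition survive restriction, to be the main obstacle; the remaining work is bookkeeping with results already in hand.
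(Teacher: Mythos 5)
Your overall strategy is the same as the paper's: when neither map is open, invoke Lemma~\ref{lemma:prim6} to produce a common fixed point $p$ splitting $I$ into two jointly invariant halves, and recurse. You go further than the paper in two welcome respects: you explicitly check that strong commutativity and the orientation condition on exacting points are inherited by the restrictions (the paper leaves both implicit, and your closure argument for the first is sound), and you try to replace the paper's terse termination claim (``eventually this process must stop, otherwise $f$ or $g$ will have an infinite number of critical points'') with a formal induction on $N=|C_f|+|C_g|$.

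The genuine gap is exactly the one you flag: the descent. Your induction requires that \emph{each} of $[0,p]$ and $[p,1]$ contain a point of $C_f\cup C_g$, and you justify this by saying that locating $p$ among the primary critical values ``as is done inside the proof of that lemma'' yields it. But the proof of Lemma~\ref{lemma:prim6} does not establish this: in Case~2 it only places $p$ in $[u_2,v_1]$ (or, in Case~1, at a common primary critical value), and none of Cases A--D rules out, a priori, that every critical point of both maps lies on one side of $p$. If that happened, one half would carry all $N$ critical points and your inductive hypothesis would not apply to it, so the induction as structured does not close. Note also that the non-openness of $f$ and $g$ on $I$ does not by itself prevent this: a critical-point-free half simply has both restrictions equal to increasing homeomorphisms, which is perfectly consistent with $f$ and $g$ being non-open globally. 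To repair the argument you should either (i) actually prove the two-sided claim, e.g.\ by tracking where the critical points realizing the primary critical values $u_1,u_2$ of $g$ and $v_1,v_2$ of $f$ must sit relative to $p$ using Lemma~\ref{invariant} and the invariance $f([0,p])=[0,p]$, $f([p,1])=[p,1]$; or (ii) restructure the recursion as the paper implicitly does: observe that a subinterval whose interior misses $C_f\cup C_g$ has both restrictions monotone, hence open, so further splitting is only ever performed on intervals whose interiors contain critical points of \emph{both} maps, and then supply a termination argument for that process (which is also the one point where the paper's own proof is only asserted). As written, the key step is a conjecture rather than a proof, as you yourself acknowledge.
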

\begin{proof}
	If $f$ or $g$ is open, then we are done. So suppose that both $f$ and $g$ are not open. Then by Lemma \ref{lemma:prim6}, there exists $0=p_0<p_1<p_2=1$ such that $[p_i,p_{i+1}]$ is invariant under $f$ and $g$ for each $i\in\{0,1\}$. 
	
	Continuing inductively, suppose that $0=p_0<p_1<...<p_k=1$ have been found such that $[p_i,p_{i+1}]$ is invariant under $f$ and $g$ for each $i\in\{0,...,k-1\}$. If at least one of $\{f|_{[p_i,p_{i+1}]},g|_{[p_i,p_{i+1}]}\}$ is open for each $i$, then we are done. Otherwise, there exists $i'$ such that neither $f|_{[p_{i'},p_{i'+1}]}$ nor  $g|_{[p_{i'},p_{i'+1}]}$ is open. Then by Lemma \ref{lemma:prim6} there exists $p_{i'}<p<p_{i'+1}$ such that $[p_{i'},p]$ and $[p,p_{i'+1}]$ are both invariant under $f$ and $g$. Reindex $0=p_0<p_1<\ldots<p_{i'}<p<p_{i'+1}<\ldots<p_k=1$ as $0=p_0<p_1<\ldots<p_k<p_{k+1}=1$.
	
	Eventually for some $n$, this process must stop. Otherwise, $f$ or $g$ will have an infinite number of critical points.
\end{proof}

Now we will consider the case when one of  $f$, $g$ is order preserving and the other is order reversing on their exacting points:

\begin{lemma}\label{lem:subdiv}
	Suppose that $f\colon I\to I$ and $g\colon I\to I$ are  piecewise monotone onto maps such that $f^{-1}\circ g=g\circ f^{-1}$, and such that the following hold:
	\begin{enumerate} 
		\item there exists $0=p_0<p_1<\ldots<p_n=1$ such that 
		$[p_i,p_{i+1}]$ is invariant under both $f^2$ and $g$ and at least one of $\{f^2|_{[p_i,p_{i+1}]},g|_{[p_i,p_{i+1}]}\}$ is open for each $i$,
		\item $\{f(p_i)\}_{i=0}^{\infty}$ is decreasing.
	\end{enumerate}
	Then there exists a subdivision $\{r_i\}_{i=0}^{n'}$, $r_i<r_{i+1}$ such that 
	\begin{enumerate}
		\item $[r_i,r_{i+1}]$ is invariant under both $g$ and $f^2$,
		\item $f([r_i,r_{i+1}])=[r_{n'-i-1},r_{n'-i}] $ for each $i\in\{0,...,n'-1\}$. 
	\end{enumerate}
\end{lemma}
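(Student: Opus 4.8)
The plan is to obtain the finer subdivision by adjoining to $\{p_i\}$ the $f$-images $\{f(p_i)\}$ of the partition points and then showing that $f$ permutes the resulting pieces by the reversal $j\mapsto n'-1-j$; finiteness of the subdivision is automatic, since $R:=\{p_i\}\cup\{f(p_i)\}$ is a finite set.

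I would begin by checking that the partition points, and their $f$-images, are fixed by both $f^2$ and $g$. For $0<i<n$ this is immediate: $\{p_i\}=[p_{i-1},p_i]\cap[p_i,p_{i+1}]$ and both pieces are invariant (mapped onto themselves) under $f^2$ and under $g$ by hypothesis~(1), so $f^2(p_i)=g(p_i)=p_i$. For $p_i\in\{0,1\}$ one uses that on the adjacent end piece at least one of $f^2$, $g$ is open, and an onto piecewise monotone open self-map of a compact interval sends the endpoints of the interval to endpoints (near an endpoint such a map is monotone, so its local image is one-sided); this leaves only $p_i\mapsto p_i$ or the ``reversed'' possibility $0\mapsto p_1$ (resp.\ $1\mapsto p_{n-1}$), and ruling the latter out is part of the order-reversal argument below. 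Granting $f^2(p_i)=p_i$ for all $i$, the map $f$ carries $A:=\{p_i\}$ bijectively onto $B:=\{f(p_i)\}$ — if $f(p_i)=f(p_j)$ then $p_i=f^2(p_i)=f^2(p_j)=p_j$ — and, since $f(f(p_i))=f^2(p_i)=p_i$, carries $B$ back onto $A$; hence $f$ restricts to an involution of $R=A\cup B\ni 0,1$. Every point of $R$ is then fixed by $f^2$ (using $f^2\circ f=f\circ f^2$ and $f^2$ fixing $A$) and by $g$ (using $g\circ f=f\circ g$ and $g$ fixing $A$). Writing $R=\{r_0<r_1<\dots<r_{n'}\}$, hypothesis~(2) says $f$ reverses the order of $A$, and since $f\circ f=\mathrm{id}$ on $R$ and $f(B)=A$, it also reverses the order of $B$.

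The crux — and what I expect to be the main obstacle — is to upgrade this to: $f$ reverses the order of all of $R$, equivalently, the two-point orbits $\{p_i,f(p_i)\}$ of $f$ are pairwise \emph{nested} rather than linked. This genuinely uses strong commutativity: one can pick partition points with $\{f(p_i)\}$ decreasing whose orbits are linked, so the $f^2$-invariance alone does not suffice. I would argue by contradiction. If two orbits were linked, some $f(p_i)$ would fall in the interior of a piece $[p_j,p_{j+1}]$, and $f$ would carry $[p_j,p_{j+1}]$ ``over itself'' with a fold, producing a critical point of $f$ in $(p_j,p_{j+1})$ whose value is not a partition value. Feeding this into the description of $\Gamma(g^{-1}\circ f)=\{(g(t),f(t)):t\in I\}$ from Section~\ref{sec:hats} — which by Corollary~\ref{cor:monotone_param} is locally monotone with exactly $|C_f|$ hats and exactly two endpoints, subject to the bookkeeping of Remark~\ref{hatpos} and with Lemma~\ref{lem:cross} controlling self-intersections — one exhibits more hats or endpoints than are allowed, a contradiction. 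An alternative route is to pin $f([p_j,p_{j+1}])$ down directly as a union of consecutive $R$-pieces, combining the $f^2$-invariance of $[p_j,p_{j+1}]$ with Lemma~\ref{lem:connected} and, on the pieces where the open factor is $f^2$, with Lemma~\ref{lem:open}.

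Once $f|_R$ is known to reverse order, the conclusion follows. From $f(R)=R$ we get $f(r_j)=r_{n'-j}$, so $f([r_j,r_{j+1}])\supseteq[r_{n'-j-1},r_{n'-j}]$; for equality, and for the invariance of each $[r_j,r_{j+1}]$ under $f^2$ and under $g$, I would use that each $r_k$ is fixed by $f^2$ and $g$, that $[r_j,r_{j+1}]$ lies inside the $f^2$- and $g$-invariant piece $[p_i,p_{i+1}]$ that contains it, and the openness-or-monotonicity alternative of hypothesis~(1) on $[p_i,p_{i+1}]$ (to rule out overshoot), together with $g\circ f=f\circ g$ to transport $g$-invariance across the reversal $j\mapsto n'-1-j$; this is a case analysis whose details I omit here. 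The result is the required subdivision $\{r_i\}_{i=0}^{n'}$.
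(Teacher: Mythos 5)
Your overall strategy is the same as the paper's: refine $\{p_i\}$ by adjoining the endpoints of the image intervals $f([p_i,p_{i+1}])$, note that each such image interval is again invariant under $f^2$ (because $f^2(f([p_i,p_{i+1}]))=f(f^2([p_i,p_{i+1}]))=f([p_i,p_{i+1}])$) and under $g$ (because $g(f([p_i,p_{i+1}]))=f(g([p_i,p_{i+1}]))=f([p_i,p_{i+1}])$, using only commutativity), and then show that $f$ permutes the refined pieces by the reversal $j\mapsto n'-1-j$. Your preliminary observations ($f^2(p_i)=g(p_i)=p_i$ at interior partition points, $f|_R$ an involution exchanging $A$ and $B$, order reversal on $A$ and on $B$ separately) are fine, though the endpoint cases you defer ($g(0)=p_1$, etc.) are themselves part of what must be excluded.

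The genuine gap is exactly the step you single out as the crux: that $f$ reverses the order of all of $R$, i.e.\ that the intervals $f([p_i,p_{i+1}])$ tile $I$ in reversed order with no linking and no overshoot. You propose two routes and carry out neither. The first (linked orbits force too many hats or endpoints of $\Gamma(g^{-1}\circ f)$) is not obviously viable: Corollary~\ref{cor:monotone_param} gives an \emph{exact} count of hats, one per critical point of $f$, and a critical point of $f$ in $(p_j,p_{j+1})$ whose value is not a partition value still contributes only its allotted hats, so it is unclear how a linked configuration produces an overcount. The second route (pinning down $f([p_j,p_{j+1}])$ using Lemma~\ref{lem:connected} and Lemma~\ref{lem:open}) is closer to what is needed but is not developed. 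The paper closes this step by a different, elementary outside-in induction: writing $[q_i,q_{i+1}]=f([p_i,p_{i+1}])$, it identifies $f([0,r_1])$ by comparing $p_1$ with $q_{n-1}$ in the two possible cases (e.g.\ if $r_1=p_1\leq q_{n-1}$ then $f([0,r_1])=[q_1,1]\subseteq f([0,q_{n-1}])=[p_{n-1},1]$, forcing $r_{n'-1}=q_1$), then restricts to the invariant interval $[r_1,r_{n'-1}]$ and repeats. Without this induction or a completed substitute, the proof is incomplete; the same applies to your final paragraph, where the $f^2$- and $g$-invariance of each $[r_j,r_{j+1}]$ and the exact equality $f([r_j,r_{j+1}])=[r_{n'-j-1},r_{n'-j}]$ are again deferred to an omitted case analysis.
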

\begin{proof}
	Let $[q_i,q_{i+1}]=f([p_i,p_{i+1}])$. Then notice the following:
	\begin{enumerate}
		\item  $f([q_i,q_{i+1}])=f^2([p_i,p_{i+1}])=[p_i,p_{i+1}]$, thus it follows that $f^2([q_i,q_{i+1}]\cap[p_i,p_{i+1}])=[q_i,q_{i+1}]\cap[p_i,p_{i+1}]$
		\item  $g([q_i,q_{i+1}])=g\circ f([p_i,p_{i+1}])=f\circ g([p_i,p_{i+1}])$ thus it follows that $g([q_i,q_{i+1}]\cap[p_i,p_{i+1}])=[q_i,q_{i+1}]\cap[p_i,p_{i+1}]$.
	\end{enumerate}
	Let  $\{r_i\}_{i=0}^{n'}= \{p_i\}_{i=0}^{n}\cup \{q_i\}_{i=0}^{n}$ such that $0=r_0<r_1<...<r_{n'}=1$. Then it follows from above that $f^2([r_i,r_{i+1}])=[r_i,r_{i+1}]$ and $g([r_i,r_{i+1}])=[r_i,r_{i+1}]$. Next we will  show $f([r_i,r_{i+1}])=[r_{n'-i-1},r_{n'-i}] $. Consider $f([0,r_1])$.
	
	{\bf Case 1.} $r_1=p_1\leq q_{n-1}$. Then $[q_1,1]=f([0,r_1])\subset f([0,q_{n-1}])=[p_{n-1},1]$.
	Hence $r_{n'-1}=q_1$, since $p_{n-1}\leq q_1$.
	
	{\bf Case 2.} $r_1= q_{n-1}\leq p_1$. Then $[p_{n-1},1]=f([0,r_1])\subset f([0,p_1])=[q_1,1]$.
	Hence $r_{n'-1}=p_{n-1}$, since $p_{n-1}\geq  q_1$.	
	
	Now notice that $f([r_1,r_{n'-1}])=[r_1,r_{n'-1}]$. So  it follows by induction  that $f([r_i,r_{i+1}])=[r_{n'-i-1},r_{n'-i}] $ for each $i$.	
\end{proof}

Finally, we will consider the case when both of  $f$ and $g$  is order reversing on their exacting points: 

\begin{lemma}\label{lem:subdiv2}
	Suppose that $f\colon I\to I$ and $g\colon I\to I$ are  piecewise monotone onto maps such that $f^{-1}\circ g=g\circ f^{-1}$, and such that the following hold:
	\begin{enumerate} 
		\item there exists $0=p_0<p_1<\ldots<p_n=1$ such that 
		$[p_i,p_{i+1}]$ is invariant under both $f^2$ and $g^2$ and at least one of $\{f^2|_{[p_i,p_{i+1}]},g^2|_{[p_i,p_{i+1}]}\}$ is open for each $i$,
		\item $\{f(p_i)\}_{i=0}^{\infty}$  and $\{g(p_i)\}_{i=0}^{\infty}$ are decreasing
	\end{enumerate}
	Then there exists a subdivision $\{r_i\}_{i=0}^{n'}$, $r_i<r_{i+1}$ such that 
	\begin{enumerate}
		\item $[r_i,r_{i+1}]$ is invariant under both $g^2$ and $f^2$,
		\item $f([r_i,r_{i+1}])=[r_{n'-i-1},r_{n'-i}] =g([r_i,r_{i+1}])$ for each $i\in\{0,...,n'-1\}$. 
	\end{enumerate}
\end{lemma}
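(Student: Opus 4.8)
The plan is to follow the scheme of the proof of Lemma~\ref{lem:subdiv}, but now to refine the partition using images under $f$, under $g$, and under $f\circ g$, since here both $f$ and $g$ must be turned into order reversals of the subdivision. First I would record the two facts that make everything go through. Strong commutativity is inherited by powers and by $f\circ g$: from $f^{-1}\circ g=g\circ f^{-1}$ (equivalently $g^{-1}\circ f=f\circ g^{-1}$) one gets $f^{-1}\circ g^2=g^2\circ f^{-1}$, $g^{-1}\circ f^2=f^2\circ g^{-1}$, and $f\circ g=g\circ f$ commutes with each of $f,g,f^2,g^2$. Moreover every partition point is fixed by $f^2$ and by $g^2$: since $[p_{i-1},p_i]$ and $[p_i,p_{i+1}]$ are invariant under $f^2$ and under $g^2$, we get $f^2(p_i),g^2(p_i)\in[p_{i-1},p_i]\cap[p_i,p_{i+1}]=\{p_i\}$.

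Next I would let $\{r_i\}_{i=0}^{n'}$ be the increasing enumeration of $R:=\{p_i\}\cup f(\{p_i\})\cup g(\{p_i\})\cup (f\circ g)(\{p_i\})$, which is finite and contains $0$ and $1$. Using $f^2(p_i)=g^2(p_i)=p_i$ and $f\circ g=g\circ f$, a short computation shows $f(R)\subseteq R$ and $g(R)\subseteq R$ (for instance $f(f(p_i))=p_i$, $g(f(p_i))=f(g(p_i))\in(f\circ g)(\{p_i\})$, $f((f\circ g)(p_i))=f^2(g(p_i))=g(p_i)$, $g((f\circ g)(p_i))=g^2(f(p_i))=f(p_i)$, and so on). For claim (1), every cell $[r_i,r_{i+1}]$ lies inside one interval from each of the four subpartitions $\{p_j\}$, $f(\{p_j\})$, $g(\{p_j\})$, $(f\circ g)(\{p_j\})$, and each interval of these four subpartitions is invariant under both $f^2$ and $g^2$ — for the $\{p\}$-pieces this is hypothesis~(1), and for the others it should follow because $f$, $g$, $f\circ g$ carry the $\{p\}$-pieces onto them while commuting with $f^2$ and $g^2$ (e.g.\ $f^2\circ f([p_j,p_{j+1}])=f\circ f^2([p_j,p_{j+1}])=f([p_j,p_{j+1}])$ and $g^2\circ f([p_j,p_{j+1}])=f\circ g^2([p_j,p_{j+1}])=f([p_j,p_{j+1}])$). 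Hence $f^2([r_i,r_{i+1}])$ and $g^2([r_i,r_{i+1}])$ lie in the intersection of those four invariant intervals, which by the refinement structure is precisely $[r_i,r_{i+1}]$; surjectivity of $f^2$ and $g^2$ upgrades the inclusions to equalities.

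For claim (2), since $R$ is $f$-invariant and $\{f(p_i)\}$ is decreasing, $f$ reverses the order of $R$, so $f(r_i)=r_{n'-i}$; then the inductive argument from Lemma~\ref{lem:subdiv} (inspecting $f([0,r_1])$ and propagating through the nesting of images, its Cases~1 and 2) gives $f([r_i,r_{i+1}])=[r_{n'-i-1},r_{n'-i}]$ for every $i$. Repeating this verbatim with $g$ in place of $f$, using that $\{g(p_i)\}$ is decreasing, yields $g([r_i,r_{i+1}])=[r_{n'-i-1},r_{n'-i}]$ as well, which finishes the proof.

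The step I expect to be the real work is the assertion in claim~(1) that $f$, $g$ and $f\circ g$ carry the coarse pieces $[p_j,p_{j+1}]$ onto intervals that fit the refinement, so that the four subpartitions genuinely tile and no non-monotone open piece folds its image across several refined cells. This is exactly where one has to lean on the monotonicity supplied by the primary-critical-value decomposition that produced $\{p_i\}$ (Lemma~\ref{prop:prim1}, Lemma~\ref{monotonepatr}, Lemma~\ref{invariant}). An equivalent route that trades this bookkeeping for a different check is to apply Lemma~\ref{lem:subdiv} twice: first to the pair $(f,g^2)$ — legitimate since $f^{-1}\circ g^2=g^2\circ f^{-1}$, the partition $\{p_i\}$ is invariant under $f^2$ and $g^2$ with the openness hypothesis, and $\{f(p_i)\}$ is decreasing — which produces a refinement on which $f$ is a mirror and which is still invariant under $f^2$ and $g^2$, and then to the pair $(g,f^2)$ on that refinement; but then one must verify the openness hypothesis survives the first refinement, which is the analogous obstacle.
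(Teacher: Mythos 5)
Your closing remark is, in fact, the paper's entire proof: it sets $\alpha=g^2$, $\beta=f^2$, notes $f^{-1}\circ\alpha=\alpha\circ f^{-1}$ and $\beta^{-1}\circ g=g\circ\beta^{-1}$, and applies Lemma~\ref{lem:subdiv} twice — first to the pair $(f,g^2)$ to get an intermediate refinement $\{\hat r_i\}$, then to the pair $(g,f^2)$ on that refinement. So the route you relegate to a final parenthetical is the intended one, and the obstacle you correctly identify there (the openness hypothesis and the monotonicity of $\{g(\hat r_i)\}$ must be re-verified on $\{\hat r_i\}$, and the mirror property $f([\hat r_i,\hat r_{i+1}])=[\hat r_{\hat n-i-1},\hat r_{\hat n-i}]$ must survive the second refinement) is a genuine point that the paper itself leaves implicit.

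Your primary argument is a genuinely different, direct construction, and parts of it are solid: the $f$- and $g$-invariance of $R=\{p_i\}\cup f(\{p_i\})\cup g(\{p_i\})\cup(f\circ g)(\{p_i\})$ is correctly checked via $f^2(p_i)=p_i=g^2(p_i)$ and commutativity (though the fixed-point claim as written only covers interior $p_i$; the endpoints $p_0=0$, $p_n=1$ need a separate word). But the proof is not closed at exactly the step you flag. For claim~(1) you need the intersection of the four invariant intervals containing a refined cell to be precisely that cell, which requires that $f$, $g$ and $f\circ g$ carry each $[p_j,p_{j+1}]$ onto the interval spanned by the images of its endpoints — i.e.\ that $f(p_j),f(p_{j+1})$ are the extrema of $f$ on $[p_j,p_{j+1}]$, and likewise for $g$ and $f\circ g$. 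Hypotheses (1)--(2) alone do not give this: a non-monotone $f|_{[p_j,p_{j+1}]}$ could a priori overshoot $[f(p_{j+1}),f(p_j)]$, in which case the endpoints of $f([p_j,p_{j+1}])$ do not lie in $R$ and the ``four subpartitions tile'' picture collapses; what you do prove ($f^2$- and $g^2$-invariance of the image sets $f([p_j,p_{j+1}])$, etc.) is invariance of possibly the wrong intervals. For claim~(2), the assertion that $f$ (resp.\ $g$) reverses the order of all of $R$, not merely of $\{p_i\}$, is stated without justification; for two points of $R$ interior to the same $[p_j,p_{j+1}]$ this again needs control of $f$ on that piece beyond the stated hypotheses. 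Both points require the structural input from Lemmas~\ref{prop:prim1}, \ref{monotonepatr} and \ref{invariant} (or an argument that the hypotheses force the required behaviour), and until that is supplied the direct route is a plan rather than a proof. The cleaner fix is to write the proof the paper's way and then carry out the bookkeeping you already identified for the double application of Lemma~\ref{lem:subdiv}.
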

\begin{proof} Let $\alpha=g^2$ and $\beta=f^2$ and notice that $f^{-1}\circ\alpha=\alpha\circ f^{-1}$ and $\beta^{-1}\circ g=g\circ \beta^{-1}$. Then first apply Lemma \ref{lem:subdiv} to $f^{-1}\circ\alpha=\alpha\circ f^{-1}$ to get $\{\hat{r}_i\}_{i=0}^{\hat{n}}$ and then  again apply  Lemma \ref{lem:subdiv} to $\beta^{-1}\circ g=g\circ \beta^{-1}$ get $\{r_i\}_{i=0}^{n'}$.
	\end{proof}

Note that the following theorem is stated for $f^{-1}\circ g=g\circ f^{-1}$, but this is true if and only if $g^{-1}\circ f=f\circ g^{-1}$. So all of the statements are also true when switching $f$ and $g$:
\begin{theorem}\label{thm:main}
	Let $f,g\colon I\to I$ be piecewise monotone onto maps such that $f^{-1}\circ g=g\circ f^{-1}$. Then there are $0=p_0<p_1<\ldots<p_l=1$ such that  one of the following three occurs:
	\begin{enumerate}
		\item[(a)] $[p_i,p_{i+1}]$ is invariant under $f$ and $g$ for every $i$, and
		\begin{itemize}
			\item[(i)] if $g|_{[p_i,p_{i+1}]}$ is open and non-monotone, then $f|_{[p_i,p_{i+1}]}$ is open.
			\item[(ii)] if $g|_{[p_i,p_{i+1}]}$ is non-monotone and not open, then $f|_{[p_i,p_{i+1}]}$ is monotone.
		\end{itemize}
		See Figure~\ref{fig:typea}.
		\item[(b)] $[p_i,p_{i+1}]$ is invariant under  $g$ and $f([p_i,p_{i+1}])=[p_{l-i-1},p_{l-i}]$ for every $i\in\{0,\ldots,l-1\}$, and
		\begin{itemize}
			\item[(i)] if $g|_{[p_i,p_{i+1}]}$ is open and non-monotone, then $f^2|_{[p_i,p_{i+1}]}$ and $f|_{[p_i,p_{i+1}]}$ are open  
			\item[(ii)] if $g|_{[p_i,p_{i+1}]}$ is non-monotone and not open, then $f|_{[p_i,p_{i+1}]}$ and $f|_{[p_{l-i-1},p_{l-i}]}$ are both monotone.
		\end{itemize}
		See Figure~\ref{fig:typeb}.
		\item[(c)]  $f([p_i,p_{i+1}])=[p_{l-i-1},p_{l-i}]=g([p_i,p_{i+1}])$ for every $i\in\{0,\ldots,l-1\}$, and
	\begin{itemize}
		\item[(i)] if $g^2|_{[p_i,p_{i+1}]}$ is open and non-monotone, then $f^2|_{[p_i,p_{i+1}]}$ and $f|_{[p_i,p_{i+1}]}$ are open  
		\item[(ii)] if $g^2|_{[p_i,p_{i+1}]}$ is non-monotone and not open, then $f|_{[p_i,p_{i+1}]}$ and $f|_{[p_{l-i-1},p_{l-i}]}$ are both monotone.
	\end{itemize}
	See Figure~\ref{fig:typec}.
	\end{enumerate}
\end{theorem}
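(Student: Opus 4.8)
The overall strategy is to reduce everything to Lemmas~\ref{lem:main_increasing}, \ref{lem:subdiv} and \ref{lem:subdiv2} by splitting into cases according to the orientation of $f$ and $g$ on their exacting points. Recall from the discussion preceding Lemma~\ref{oddnotcon} that each of $f$ and $g$ is either order preserving or order reversing on its exacting points, and that $f^{-1}\circ g=g\circ f^{-1}$ is equivalent to $g^{-1}\circ f=f\circ g^{-1}$, so the roles of $f$ and $g$ may be interchanged. Up to this symmetry there are exactly three cases: \textbf{(a)} both $f$ and $g$ order preserving on their exacting points; \textbf{(b)} exactly one of them order reversing, which after possibly swapping we take to be $f$; \textbf{(c)} both order reversing. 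These will produce conclusions (a), (b), (c) of the theorem, respectively. A separate, easy base case is when $f$ or $g$ is globally open: here one takes the trivial partition $p_0=0<p_1=1$ and uses Proposition~\ref{twofold} together with the rigidity of open non-monotone maps to see that the other map is then open or monotone, so (i)--(ii) hold.

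\textbf{Case (a).} Here I would apply Lemma~\ref{lem:main_increasing} verbatim to obtain $0=p_0<\dots<p_l=1$ with each $[p_i,p_{i+1}]$ invariant under both $f$ and $g$ and with at least one of $f|_{[p_i,p_{i+1}]},g|_{[p_i,p_{i+1}]}$ open. To sharpen this to the alternative (i)--(ii), fix $J=[p_i,p_{i+1}]$. Since $J$ is \emph{fully} invariant under both maps (the partition points are common fixed points and the maps are onto $J$), the restrictions $f|_J$ and $g|_J$ are again piecewise monotone onto self-maps of $J$ which strongly commute and are order preserving on their exacting points, so every result of the section applies to the pair $(f|_J,g|_J)$. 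The key observation is that an open, non-monotone self-map of $J$ has only the trivial primary-critical-value decomposition (its sole odd-indexed interval is $J$ itself), so Lemma~\ref{monotonepatr} applied to $(f|_J,g|_J)$ and to $(g|_J,f|_J)$ yields: \emph{if one of $f|_J$, $g|_J$ is open and non-monotone, then the other is open or monotone.} From this, (i)--(ii) follow: if $g|_J$ is open and non-monotone, then a non-open $f|_J$ would be forced to be monotone, hence a homeomorphism of $J$, hence open, a contradiction, so $f|_J$ is open; if $g|_J$ is non-monotone and not open, then $f|_J$ is open (one of the two must be and $g|_J$ is not), and were $f|_J$ non-monotone the observation would force $g|_J$ to be open or monotone, against the hypothesis, so $f|_J$ is monotone.

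\textbf{Cases (b) and (c).} In case (b), $f$ is order reversing on its exacting points; one checks that this forces $f^{2}$ to be order preserving on \emph{its own} exacting points, and $f^{-1}\circ g=g\circ f^{-1}$ implies $(f^{2})^{-1}\circ g=g\circ(f^{2})^{-1}$, so $(f^{2},g)$ is a case-(a) pair. Applying Lemma~\ref{lem:main_increasing} to $(f^{2},g)$ gives a partition satisfying hypothesis (1) of Lemma~\ref{lem:subdiv}, while the orientation reversal of $f$ gives hypothesis (2) (the partition values are carried by $f$ in decreasing order); Lemma~\ref{lem:subdiv} then produces the $\{r_i\}$ with $[r_i,r_{i+1}]$ invariant under $f^{2}$ and $g$ and $f([r_i,r_{i+1}])=[r_{n'-i-1},r_{n'-i}]$. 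The openness/monotonicity clauses of part (b) are obtained by applying the case-(a) conclusion to $(f^{2}|_{[r_i,r_{i+1}]},g|_{[r_i,r_{i+1}]})$ on each invariant interval and then transferring the data to $f$ itself: since $f$ maps $[r_i,r_{i+1}]$ onto $[r_{n'-i-1},r_{n'-i}]$ and conversely, with composition $f^{2}|_{[r_i,r_{i+1}]}$, monotonicity of $f^{2}|_{[r_i,r_{i+1}]}$ forces both of these restrictions to be homeomorphisms, and openness of $f^{2}|_{[r_i,r_{i+1}]}$ forces both of them to be open (using the fold structure via Lemma~\ref{lem:open}). Case (c) is identical with $g$ replaced by $g^{2}$, so that $f^{2}$ and $g^{2}$ are both order preserving on their exacting points and Lemma~\ref{lem:subdiv2} replaces Lemma~\ref{lem:subdiv}; the translation back to $f$ and $g$ is the same.

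The soft part is the case division and the invocation of the three structural lemmas; the real work, and the main obstacle, is the bookkeeping that makes the reductions in (b) and (c) legitimate: showing that order reversal on the exacting points of $f$ (resp.\ $g$) forces $f^{2}$ (resp.\ $g^{2}$) to be order preserving on \emph{its own}, different, exacting points; checking that the partition produced by Lemma~\ref{lem:main_increasing} for $(f^{2},g)$ actually meets the monotonicity hypothesis of Lemma~\ref{lem:subdiv}; and transferring ``open'' and ``monotone'' from $f^{2}$ and $g^{2}$ back to $f$ and $g$ on the swapped intervals without losing information. By contrast, the refinements (i)--(ii) in case (a) are comparatively routine once one notes the triviality of the primary-critical-value decomposition of an open non-monotone map.
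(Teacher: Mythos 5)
Your proposal follows the same route as the paper: split into cases according to whether $f$ and $g$ are order preserving or reversing on their exacting points, obtain case (a) from Lemma~\ref{lem:main_increasing} together with Lemma~\ref{monotonepatr}, and reduce cases (b) and (c) to case (a) by passing to $f^2$ (and $g^2$) and invoking Lemmas~\ref{lem:subdiv} and~\ref{lem:subdiv2}. The paper's own proof is considerably terser, but the case division, the lemmas invoked, and the way they are combined are the same as in your argument.
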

\begin{proof}
	Let $\{v_i\}_{i=0,1}^k$ be primary critical values of $f$, and let $\{t_i\}_{i=0,1}^k$ be the associated exacting points. If $t_i<t_{i+1}$ for every $i\in\{0,1\ldots,k-1\}$, then Lemma~\ref{lem:main_increasing} and Lemma~\ref{monotonepatr} give $(a)$. If $t_i>t_{i+1}$ for every $i\in\{0,1,\ldots,k-1\}$, then let $\{\tilde v_i\}_{i=0,1}^{k'}$ be primary critical values, and $\{\tilde t_i\}_{i=0,1}^{k'}$ the associated exacting points of $f^2$. Then the sequence $\{\tilde t_i\}$ is increasing. Since $f^{-2}\circ g=g\circ f^{-2}$, using $(a)$ we find invariant intervals $[\tilde p_i,\tilde p_{i+1}]$ for both $g$ and $f^2$. Now apply Lemmas~\ref{monotonepatr} and ~\ref{lem:subdiv}. Case $(c)$ is similar to case $(b)$. Instead just use Lemma ~\ref{lem:subdiv2}. 
\end{proof}

\begin{figure}[!ht]
	\centering
	\begin{tikzpicture}[scale=4]
	\draw (0,0)--(0,1)--(1,1)--(1,0)--(0,0);
	\draw (0,0)--(0,1/3)--(1/3,1/3)--(1/3,0)--(0,0);
	\draw (1/3,1/3)--(1/3,2/3)--(2/3,2/3)--(2/3,1/3)--(1/3,1/3);
	\draw (2/3,2/3)--(2/3,1)--(1,1)--(1,2/3)--(2/3,2/3);
	\draw (0,1/3)--(1/6,0)--(1/3,1/3)--(4/9,5/9)--(5/9,4/9)--(2/3,2/3)--(1,1);
	\node[above] at (1/2,1) {\small $f$};
	
	\node[below] at (0,0) {\small $p_0$};
	\node[below] at (1/3,0) {\small $p_1$};
	\node[below] at (2/3,0) {\small $p_2$};
	\node[below] at (1,0) {\small $p_3$};
	\node[left] at (0,0) {\small $p_0$};
	\node[left] at (0,1/3) {\small $p_1$};
	\node[left] at (0,2/3) {\small $p_2$};
	\node[left] at (0,1) {\small $p_3$};
	\end{tikzpicture}
	\begin{tikzpicture}[scale=4]
	\draw (0,0)--(0,1)--(1,1)--(1,0)--(0,0);
	\draw (0,0)--(0,1/3)--(1/3,1/3)--(1/3,0)--(0,0);
	\draw (1/3,1/3)--(1/3,2/3)--(2/3,2/3)--(2/3,1/3)--(1/3,1/3);
	\draw (2/3,2/3)--(2/3,1)--(1,1)--(1,2/3)--(2/3,2/3);
	\draw (0,0)--(1/9,1/3)--(2/9,0)--(1/3,1/3)--(2/3,2/3)--(5/6,1)--(11/12,5/6)--(1,1);
	\node[above] at (1/2,1) {\small $g$};
	
	\node[below] at (0,0) {\small $p_0$};
	\node[below] at (1/3,0) {\small $p_1$};
	\node[below] at (2/3,0) {\small $p_2$};
	\node[below] at (1,0) {\small $p_3$};
	\node[left] at (0,0) {\small $p_0$};
	\node[left] at (0,1/3) {\small $p_1$};
	\node[left] at (0,2/3) {\small $p_2$};
	\node[left] at (0,1) {\small $p_3$};
	\end{tikzpicture}
	\caption{Strongly commuting piecewise monotone maps $f,g\colon I\to I$ of type $(a)$.}
	\label{fig:typea}
\end{figure}
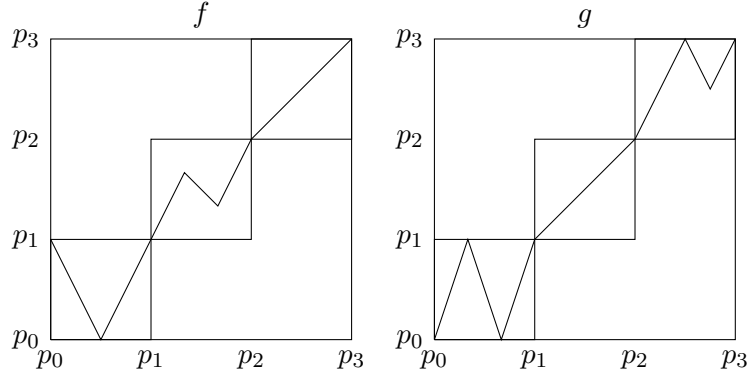
\begin{figure}[!ht]
	\centering
	\begin{tikzpicture}[scale=4]
	\draw (0,0)--(0,1)--(1,1)--(1,0)--(0,0);
	\draw (0,3/4)--(1/2,3/4)--(1/2,1)--(0,1)--(0,3/4);
	\draw (1/2,1/2)--(3/4,1/2)--(3/4,3/4)--(1/2,3/4)--(1/2,1/2);
	\draw (3/4,1/2)--(3/4,0)--(1,0)--(1,1/2)--(3/4,1/2);
	\draw (0,3/4)--(1/4,1)--(1/2,3/4)--(3/4,1/2)--(7/8,0)--(1,1/2);
	\node[above] at (1/2,1) {\small $f$};
	
	\node[below] at (0,0) {\small $p_0$};
	\node[below] at (1/2,0) {\small $p_1$};
	\node[below] at (3/4,0) {\small $p_2$};
	\node[below] at (1,0) {\small $p_3$};
	\node[left] at (0,0) {\small $p_0$};
	\node[left] at (0,1/2) {\small $p_1$};
	\node[left] at (0,3/4) {\small $p_2$};
	\node[left] at (0,1) {\small $p_3$};
	\end{tikzpicture}
	\begin{tikzpicture}[scale=4]
	\draw (0,0)--(0,1)--(1,1)--(1,0)--(0,0);
	\draw (0,0)--(0,1/2)--(1/2,1/2)--(1/2,0)--(0,0);
	\draw (1/2,1/2)--(1/2,3/4)--(3/4,3/4)--(3/4,1/2)--(1/2,1/2);
	\draw (3/4,3/4)--(3/4,1)--(1,1)--(1,3/4)--(3/4,3/4);
	\draw (0,0)--(1/6,1/2)--(2/6,0)--(1/2,1/2)--(7/12,8/12)--(8/12,7/12)--(3/4,3/4)--(10/12,1)--(11/12,3/4)--(1,1);
	\node[above] at (1/2,1) {\small $g$};
	
	\node[below] at (0,0) {\small $p_0$};
	\node[below] at (1/2,0) {\small $p_1$};
	\node[below] at (3/4,0) {\small $p_2$};
	\node[below] at (1,0) {\small $p_3$};
	\node[left] at (0,0) {\small $p_0$};
	\node[left] at (0,1/2) {\small $p_1$};
	\node[left] at (0,3/4) {\small $p_2$};
	\node[left] at (0,1) {\small $p_3$};
	\end{tikzpicture}
	\caption{Strongly commuting piecewise monotone maps $f,g\colon I\to I$ of type $(b)$.}
	\label{fig:typeb}
\end{figure}
\begin{figure}[!ht]
	\centering
	\begin{tikzpicture}[scale=4]
	\draw (0,0)--(0,1)--(1,1)--(1,0)--(0,0);
	\draw (0,3/4)--(1/2,3/4)--(1/2,1)--(0,1)--(0,3/4);
	\draw (1/2,1/2)--(3/4,1/2)--(3/4,3/4)--(1/2,3/4)--(1/2,1/2);
	\draw (3/4,1/2)--(3/4,0)--(1,0)--(1,1/2)--(3/4,1/2);
	\draw (0,3/4)--(1/4,1)--(1/2,3/4)--(3/4,1/2)--(7/8,0)--(1,1/2);
	\node[above] at (1/2,1) {\small $f$};
	
	\node[below] at (0,0) {\small $p_0$};
	\node[below] at (1/2,0) {\small $p_1$};
	\node[below] at (3/4,0) {\small $p_2$};
	\node[below] at (1,0) {\small $p_3$};
	\node[left] at (0,0) {\small $p_0$};
	\node[left] at (0,1/2) {\small $p_1$};
	\node[left] at (0,3/4) {\small $p_2$};
	\node[left] at (0,1) {\small $p_3$};
	\end{tikzpicture}
	\begin{tikzpicture}[scale=4]
	\draw (0,0)--(0,1)--(1,1)--(1,0)--(0,0);
	\draw (0,3/4)--(1/2,3/4)--(1/2,1)--(0,1)--(0,3/4);
	\draw (1/2,1/2)--(3/4,1/2)--(3/4,3/4)--(1/2,3/4)--(1/2,1/2);
	\draw (3/4,1/2)--(3/4,0)--(1,0)--(1,1/2)--(3/4,1/2);
	\draw (0,1)--(1/6,3/4)--(1/3,1)--(1/2,3/4)--(7/12,7/12)--(8/12,8/12)--(3/4,1/2)--(9/12,1/2)--(10/12,0)--(11/12,1/2)--(1,0);
	\node[above] at (1/2,1) {\small $g$};
	
	\node[below] at (0,0) {\small $p_0$};
	\node[below] at (1/2,0) {\small $p_1$};
	\node[below] at (3/4,0) {\small $p_2$};
	\node[below] at (1,0) {\small $p_3$};
	\node[left] at (0,0) {\small $p_0$};
	\node[left] at (0,1/2) {\small $p_1$};
	\node[left] at (0,3/4) {\small $p_2$};
	\node[left] at (0,1) {\small $p_3$};
	\end{tikzpicture}
	\caption{Strongly commuting piecewise monotone maps $f,g\colon I\to I$ of type $(c)$.}
	\label{fig:typec}
\end{figure}

\begin{remark}
	Let $n\geq 2$ and $f\colon I\to I$ be an open map with $n-1$ critical points. Assume that $f$ is {\em conjugate} to $T_n$, \ie if there is a homeomorphism $h\colon I\to I$ such that $f=h^{-1}\circ T_n\circ h$. Let $g\colon I\to I$ be an onto map which commutes with $f$, and define $\tilde g:=h\circ g\circ h^{-1}$. Since $f$ and $g$ commute, we have $h^{-1}\circ T_n\circ h\circ h^{-1}\circ\tilde g\circ h=h^{-1}\circ\tilde g\circ h\circ h^{-1}\circ T_n\circ h$, so $T_n\circ \tilde g=\tilde g\circ T_n$. By the result of Baxter and Joichi \cite{BaxterJoichi}, the only onto maps which commute with $T_n$ are identity and $T_m$, $m\geq 2$. Thus for every $m\geq 1$ there is a unique onto map $g\colon I\to I$ with $m-1$ critical points which commutes with $f$, and it is exactly $g=h^{-1}\circ T_m\circ h$.
	Also, according to Proposition~\ref{prop:mnrelprime} and Proposition~\ref{prop:mnnotrelprime}, $f$ and $g$ strongly commute if and only if $n$ and $m$ are relatively prime. See for example two maps in Figure~\ref{fig:slantedfolds} which strongly commute and are different (but conjugate with the same conjugacy $h$) to symmetric tent maps $T_3$ and $T_2$.
	
	Maps conjugate to some $T_n$, $n\geq 2$ are called {\em regular} in \cite{Folkman}. For example, if an open map $f$ is {\em locally eventually onto (leo)} (\ie for every open $U\subset I$ there is $n\in\N$ such that $f^n(U)=I$), then $f$ is conjugate to $T_n$, $n\geq 1$, where $|C_f|=n-1$.
\end{remark}

\begin{figure}
	\centering
	\begin{minipage}{0.5\textwidth}
		\begin{tikzpicture}[scale=6]
		\draw (0,0)--(0,1)--(1,1)--(1,0)--(0,0);
		\draw[very thick] (0,0)--(1/4,1)--(1/2,0)--(1,1);
		\draw (1/4,-0.01)--(1/4,0.01);
		\node[below] at (1/4,0) {\tiny $0.25$};
		\draw (1/2,-0.01)--(1/2,0.01);
		\node[below] at (1/2,0) {\tiny $0.5$};
		\node at (0.5,1.09) {};
		\end{tikzpicture}
	\end{minipage}
	\hspace{-58pt}
	\begin{minipage}{0.5\textwidth}
			\scalebox{0.65}{\input{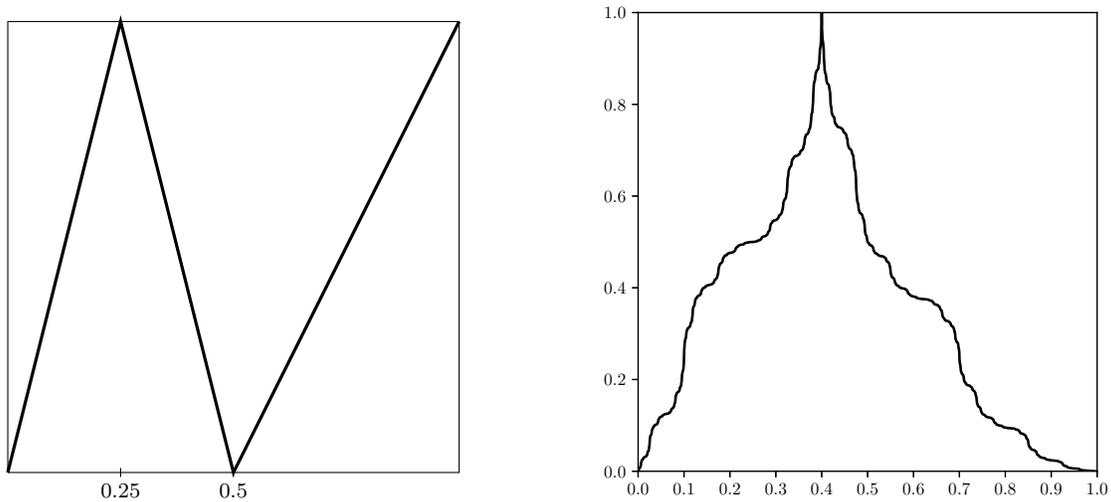}}
	\end{minipage}
	\vspace{-20pt}
	\caption{Map $f$ on the left is leo, so there is a homeomorphism $h\colon I\to I$ such that $f=h^{-1}\circ T_3\circ h$. On the right we draw the graph of $g=h^{-1}\circ T_2\circ h$, which is the unique map with a single critical point which (strongly) commutes with $f$.}
	\label{fig:slantedfolds}
\end{figure}

\begin{corollary}
	Let $f,g\colon I\to I$ be piecewise monotone, strongly commuting maps. Then there is $x\in I$ such that $f(x)=g(x)=x$.
\end{corollary}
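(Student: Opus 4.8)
The plan is to derive the corollary from Theorem~\ref{thm:main} together with the theorem of Joichi \cite{Joichi}, which asserts that two commuting continuous self-maps of a compact interval have a common fixed point as soon as one of them is open. Theorem~\ref{thm:main} supplies a partition $0=p_0<\dots<p_l=1$ of $I$ realizing one of (a), (b), (c). Throughout I would use one elementary remark: since $f$ and $g$ are piecewise monotone they are nowhere constant, so whenever a restriction $f|_J$ sends a subinterval $J$ \emph{onto} $J$ and is monotone, it is in fact a strictly monotone homeomorphism of $J$, hence an open map; dually, if $g|_J\colon J\to J$ is onto and $g^2|_J$ is monotone, then $g|_J$ is injective, hence monotone. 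Thus for the restrictions appearing below ``monotone onto'' and ``open'' are interchangeable. Moreover, whenever $J$ is invariant under both $f$ and $g$, the restrictions $f|_J,g|_J$ are commuting self-maps of $J$, so Joichi's theorem (after an affine identification $J\cong[0,1]$) applies as soon as one of them is open and yields $x\in J$ with $f(x)=g(x)=x$.

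First I would handle case (a). Take $J=[p_0,p_1]$, which is invariant under both maps, and run through the possibilities for $g|_J$: if it is monotone it is open; if it is open and non-monotone, then (a)(i) forces $f|_J$ open; if it is non-monotone and not open, then (a)(ii) forces $f|_J$ monotone, hence open. In each sub-case one of $f|_J,g|_J$ is open, and the remark above finishes it.

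Next come cases (b) and (c). Here $f$ (and, in case (c), also $g$) permutes the pieces $[p_j,p_{j+1}]$ by the involution $j\mapsto l-1-j$, while in case (b) the map $g$ fixes every piece. If $l$ is even, the two central pieces $[p_{l/2-1},p_{l/2}]$ and $[p_{l/2},p_{l/2+1}]$ are interchanged by $f$, and by $g$ in case (c), and are individually preserved by $g$ in case (b); in all situations $f(p_{l/2})$ and $g(p_{l/2})$ are forced into $[p_{l/2-1},p_{l/2}]\cap[p_{l/2},p_{l/2+1}]=\{p_{l/2}\}$, so $p_{l/2}$ is already a common fixed point. If $l$ is odd, set $i=(l-1)/2$; then $l-i-1=i$, so the middle piece $J=[p_i,p_{i+1}]$ is invariant under both $f$ and $g$. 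In case (b) one argues as in case (a), using (b)(i)--(ii) (with $f|_{[p_{l-i-1},p_{l-i}]}=f|_J$) to put one of $f|_J,g|_J$ into the open class. In case (c): if $f|_J$ is monotone it is open; otherwise $f|_J$ is non-monotone, and if $g|_J$ is monotone it is open, while if $g|_J$ is non-monotone then $g^2|_J$ is non-monotone and onto, so one of (c)(i), (c)(ii) applies to $J$ — and (c)(ii) would make $f|_J$ monotone, a contradiction, so (c)(i) applies and $f|_J$ is open. Joichi's theorem then closes each sub-case.

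The main obstacle I anticipate is bookkeeping rather than anything deep: making the upgrades ``monotone and onto implies open'' and ``$g^2|_J$ monotone implies $g|_J$ monotone'' precise (this is exactly where nowhere-constancy is used), and, in case (c), correctly threading the hypotheses (c)(i)--(ii), which are phrased in terms of $g^2$, back to a statement about $f|_J$ and $g|_J$. Everything else should be a direct read-off from Theorem~\ref{thm:main} plus an affine change of coordinates to invoke \cite{Joichi}.
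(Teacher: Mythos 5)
Your proposal is correct and follows essentially the same route as the paper: Theorem~\ref{thm:main} combined with Joichi's theorem \cite{Joichi}, with the midpoint $p_{l/2}$ handling the even-$l$ subcases of (b) and (c) and the middle invariant interval handling the odd-$l$ subcases, exactly as in the paper. If anything you are more careful than the paper's own (rather terse) argument, since you explicitly verify that one of the two restrictions is open before invoking Joichi and you correctly reduce the $g^2$-hypotheses of case (c) back to $g$; the only cosmetic difference is that in case (a) the paper simply observes that $p_1$ is already a common fixed point instead of applying Joichi on $[p_0,p_1]$.
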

\begin{proof}
	If one of $f$, $g$ is monotone, then the result of Joichi \cite{Joichi} gives the common fixed point $x$. Otherwise $l>1$ in Theorem~\ref{thm:main}. Case $(a)$ implies that $f(p_1)=g(p_1)=p_1$ (or we get a common fixed point directly from Lemma~\ref{lemma:prim6}). Similarly, if $l$ is even in case $(b)$, we again get $f(p_{l/2})=g(p_{l/2})=p_{l/2}$. If $l$ is odd, then $[p_{\frac{l-1}{2}},p_{\frac{l+1}{2}}]$ is invariant for both $f$ and $g$. We apply Joichi's theorem to find a common fixed point for $f$ and $g$ in $[p_{\frac{l-1}{2}},p_{\frac{l+1}{2}}]$. Case $(c)$ is similar to case $(b)$.
\end{proof}

\end{document}